\documentclass[final]{siamltex}
\usepackage{amsmath, amsfonts,color}
\usepackage[english]{babel}
\usepackage{dsfont}
\usepackage{amssymb}
\usepackage{mathabx}
\usepackage{enumitem}
\usepackage{graphicx,epsfig}
\usepackage{float}
\usepackage{color}
\usepackage{capt-of}
\usepackage{tikz}

\newcommand{\R}{{\mathbb R}}
\newcommand{\N}{{\mathbb N}}

\newcommand{\one}{{\mathds{1}}}

\newcommand{\cF}{{\mathcal F}}

\newcommand{\cM}{{\mathcal M}}
\newcommand{\cW}{{\mathcal W}}

\let\ds=\displaystyle

\def\R{{\mathbb R}}
\def\N{{\mathbb  N}}
\def\N{{\mathbb N}}
\def\ds{\displaystyle}

 \definecolor{mypurple}{RGB}{140,0,255}
\definecolor{myred}{rgb}{255,0,0}

\definecolor{mydarkturquoise}{RGB}{0,206,209}
\definecolor{mydeeppink}{RGB}{255,20,147}
\definecolor{darkblue}{RGB}{0,0,140}
\definecolor{blue2}{RGB}{0,0,0}
\definecolor{middleblue}{RGB}{0,0,71}
\definecolor{light-gray}{gray}{0.9}

\definecolor{ProcessBlue}{cmyk}{1,0,0,0.40}
\definecolor{Black}{cmyk}{0,0,0,1}
\definecolor{Red}{cmyk}{0,1,1,0.2}
\definecolor{Green}{cmyk}{0.9,0,1,0}
\definecolor{Orange}{cmyk}{0,0.61,0.87,0.5}
\definecolor{Fuchsia}{cmyk}{0.47,0.91,0,0.06}
\definecolor{PineGreen}{cmyk}{0.92,0,0.59,0.30}


\numberwithin{equation}{section}
 
 \newtheorem{remark}{Remark}[section]
 \newtheorem{assumption}{Assumption}[section]

\DeclareMathOperator*{\argmin}{arg\,min}

\begin{document}

\title{A mean field model for the interactions between firms on the markets of their inputs}

\author{
  Yves Achdou \thanks { Universit{\'e} de Paris Cité and  Sorbonne Universit{\'e}, CNRS, Laboratoire Jacques-Louis Lions, (LJLL), F-75006 Paris, France, achdou@ljll-univ-paris-diderot.fr}
  \and
Guillaume Carlier\thanks{CEREMADE, Universit\'e Paris
Dauphine, PSL, Pl. de Lattre de Tassigny, 75775 Paris Cedex 16, FRANCE and INRIA-Paris, MOKAPLAN,
carlier@ceremade.dauphine.fr},
\and
Quentin Petit \thanks{CEREMADE, Universit\'e Paris
Dauphine, PSL, and EDF R$\&$D, quentin.petit@edf.fr}
\and
Daniela Tonon \thanks{Dipartimento di Matematica "Tullio Levi-Civita", Universit\`a degli Studi di Padova, via Trieste 63, 35121 Padova, Italy.
tonon@math.unipd.it}
  }

\maketitle
\begin{abstract}
  We consider an economy made of competing firms which are heterogeneous in their capital and  use several inputs 
  for producing goods.  Their consumption policy is fixed rationally  by  maximizing a utility and their capital cannot fall below a given threshold
  (state constraint).
  We aim at modeling the interactions between firms on the markets of the different inputs on the long term. 
The stationary equlibria are described by  a system of coupled non-linear differential equations:
 a Hamilton-Jacobi equation describing the optimal control problem of a single atomistic firm;
 a continuity equation describing  the distribution of the  individual state variable  (the capital) in the population of firms; 
 the equilibria on the  markets of the production factors. We prove the existence of equilibria under suitable assumptions.
\end{abstract}

\section{Introduction}
\label{sec:introduction}
We consider an economy made of competing firms which are heterogeneous in their capital, and    use several inputs 
for producing goods.   These inputs, or factors of production, may include raw materials, energy, manpower, rented surface, etc...
We aim at modeling the interactions of the firms on the markets of the different inputs on the long term.
We make the following general assumptions:
\begin{itemize}
\item the economy is reduced to one sector of activity with a large number (in fact a continuum) of  firms competing on the markets of inputs
\item  the firms choose which amount of their capital is invested into production and which amount is consumed 
(for retributing the owners). Their consumption policy is fixed rationally  by  maximizing a utility 
\item the firms are identical in the sense that  (1) two different firms  with the same capital and  quantities of inputs  produce the same amounts of goods
  (2) they have the same utility function
\item there is a state constraint: the capital of any  firm must not fall below a given threshold, fixed to  $0$ in the whole paper
\item for a given firm, all the others are indistinguishable and the firms interact  only via the prices of the different inputs
\item a single firm has a negligible impact on the markets
\item equilibrium on the markets is reached when supply matches  aggregate demand. Supply is assumed to be a given function of prices.
\item closure and creation of firms may happen.  This  will be modeled in what follows.
\end{itemize}

Because we are interested in long term tendencies, we aim at finding stationary equilibria. The outputs of our model will be  
\begin{itemize}
\item the distribution of  capital 
\item the optimal investment/consumption policy of the firms given their capital
\item the unit prices of the different inputs 
\end{itemize}
Our model falls into the wide class of mean field games. 
The theory of  mean field games ({\sl MFGs} for short), has been introduced and studied 
in the pioneering  works of J-M. Lasry and P-L. Lions~\cite{PLL-CDF,MR2269875,MR2271747,MR2295621},
 and aims at studying deterministic or stochastic differential
games (Nash equilibria) as the number of agents tends to infinity. 
It supposes that the rational agents are  indistinguishable and  individually have a negligible influence on the game, 
and that each individual strategy is influenced by some averages of quantities depending on the states (or the controls) of the other agents.
 The applications of MFGs are numerous, from economics to the study of crowd motion. For useful reference on mean field games, one can see for example \cite{MR2762362,cardaliaguet2010, MR4214773,MR3268061}.

Our model will be  summarized by a system of coupled non-linear differential equations:
 (1) a Hamilton-Jacobi-Bellman equation describing the optimal control problem of a single atomistic firm;
 (2) a continuity equation describing  the distribution of the  individual state variable  (the capital) in the population of firms; 
 (3) the equilibria on the  markets of the production factors.

 The present model has some similarities with the time continuous Aiyagari-Bewley-Huggett models \cite{bewley,aiyagari, huggett}
 studied in \cite{MR3268061,MR4365976}. In particular, they all lead to a better understanding  of the
 individual accumulation of capital/investment policy. In the present paper, a key aspect for proving the existence of equilibria is the regularity properties of the individual optimal policies.

The paper is organized as follows: the model, the main results and important examples are presented in  Section \ref{sec:secGeneralities:model}. The mathematical results concerning the optimal control problem of a single firm given the prices of inputs are proved in Section \ref{sec:optim-contr-probl-1}. As already mentioned, the stress will be put on regularity properties of the solutions, which will play an important role in the remaining part of the paper. Then, the distribution of capital among the firms given the prices of inputs will be studied in Section \ref{sec:secGeneralities:fokk-planck-equat}: in particular, we will prove that under the assumptions made, the distribution is absolutely continuous with respect to Lebesgue measure. Finally, in Section \ref{sec:equilibrium},  we use  Brouwer topological degree in order  to obtain  the existence of equilibria.

For keeping the length of the paper reasonable, we have chosen not to discuss the numerical simulations that we have carried out for a model with two factors of production: the manpower and the surface rented by the firms. We refer to \cite{Petit2022phd} for a description of these simulations, a discussion of the results and comparisons with available statistics.

\section{The model and the main results}\label{sec:secGeneralities:model}
In what follow, we give more details  and   write down the different equations which summarize our model. 
First, in paragraph \ref{sec:optim-contr-probl},  we address the strategy of a single firm given the prices of the inputs. 
Second,  in paragraph \ref{sec:distrib_capital},  we propose a model for the distribution of capital,  supposing again that the prices of the inputs are given.
From the two steps above, we can deduce the aggregate demand for the different production factors.
Finally, the model is closed by matching the aggregate demand with the exogenous supply of production factors.
In the three steps mentioned above, we make some assumptions which allow us to prove the existence of a mean field equilibrium.
In subsection \ref{sec:import-exampl-util} below, we give examples in which these assumptions are satisfied.

In the following, we set $\R_+=[0,+\infty)$.

\subsection{The optimal control problem of a single firm given the prices of  inputs}
\label{sec:optim-contr-probl}
The output of a given firm is  $F(k,\ell)\in \R_+$, where $k\in \R_+$ and $\ell  \in \R_+^d$   respectively stand for the capital of the firm and
 for the quantities of the different inputs it uses. The function $F : \R_+ \times \R_+^d\to \R_+$ is the production function. 

Let $w\in (0,+\infty)^d$ be the collection of the unit prices of the different factors of production: depending on $i\in \{1,\dots,d\}$,
  $w_i$ may stand for the unit price of a raw material, the annual salary of a class of workers, the rental price of a unit of surface. The benefits of the firm in a unit of time are therefore given by $F(k,\ell)-w\cdot \ell -\delta k$, where $\delta \ge 0$ is the rate of depreciation of  capital.

 The dynamics of the capital of a given firm is  described by 
\begin{equation}
	\label{eq:chap:MFG_model:secGeneralities:capitalDynamic}
	\frac{dk}{dt}(t) = F(k(t),\ell(t))-w\cdot \ell(t) -\delta k(t)-c(t),
\end{equation}
where $c(t)$ stands for the consumption  at time $t$,  for example the share of the benefits that goes to the owners of the firm. The firm has two variables of control,  its consumption $c(t)\in \R_+$ and the quantities of inputs  $\ell(t)\in \R_+^d$.  

  The firms  face the problem of how to split their benefits between consumption and investments that produce growth. 
A given firm determines its policy by  maximizing  the following payoff:
\begin{equation}
   \label{eq:chap:MFG_model:secGeneralities:2}
	\int_0^{+\infty} U(c(t))e^{-\rho t}dt, 
\end{equation}
where $U:[0,+\infty)\rightarrow [-\infty, +\infty) $ is a  utility function and $\rho$ is a positive discount factor. 
\\
It aims at finding the controls $t\mapsto c(t)\in [0,+\infty)$ and $t\mapsto \ell(t)\in [0,+\infty)^d$
 which maximize  \eqref{eq:chap:MFG_model:secGeneralities:2}, under the constraint that its capital stay nonnegative (state constraint).
 The value of the optimal control problem when the firm has a capital $k_0\ge 0$ is 
\begin{equation}
  \label{eq:chap:MFG_model:secGeneralities:valueFunction}
    \begin{split}
  u(k_0,w)\quad =\quad &\sup_{c, \; \ell, \; k  } \int_0^{+\infty} U(c(t))e^{-\rho t}dt \\
  & \hbox{ subject to }\\
  &
  \left\{ 
    \begin{array}[c]{l}
      c   \in   L^1_{{\rm loc}}( \R_+; \R_+), \quad  \ell\in  L^1_{{\rm loc}}(\R_+; \R_+^d),\quad k\in W^{1,1}_{\rm{loc}} ( \R_+),\\   
      k \hbox{ satisfies  \eqref{eq:chap:MFG_model:secGeneralities:capitalDynamic} for a.a. $t>0$,}\\
     k(0)=k_0,\\
     k(t)\ge 0 \hbox{ for all }t. 
    \end{array}\right.
    \end{split}
\end{equation}
We will see that under suitable assumptions, namely Assumptions \ref{ass:secHJ:1} and \ref{ass:secHJ:3} below, $u(k_0,w)\in \R$  for all $k_0\in (0,+\infty)$.

We expect that the value function $u$ can be found by solving a  Hamilton-Jacobi equation in $(0,+\infty)$ 
with state constraints at $k=0$ (from the dynamic programming principle).
Let the Hamiltonian  $H:\R_+\times\R \times (0,+\infty)^d \rightarrow  (-\infty, +\infty]$ be defined as follows: for all $k\in \R_+$ and $q\in \R$, 
\begin{eqnarray}
  \label{eq:chap:MFG_model:sec:general:hamiltonian}
	H(k,q,w)&=&\ds \sup_{c\in \R_+,\; \ell\in \R_+^d}\left\{U(c) +q\left(F(k,\ell)-w\cdot \ell-\delta k-c\right)\right\}\\
  \label{eq:chap:MFG_model:secGeneralities:32}
   &=&\ds \sup_{c\in \R_+}\left\{U(c)    -cq\right\} +f(k,w)q,
\end{eqnarray}
where $f:\R_+\times (0,+\infty)^d      \rightarrow \R$ is the {\sl net output} function:
\begin{equation}
  \label{eq:chap:MFG_model:secGeneralities:33}
  f(k,w)=\sup_{\ell\in \R_+^d}\left\{F(k,\ell)-w\cdot \ell\right\}-\delta k.
\end{equation}
\begin{remark}
  By contrast with simpler applications of mean field games to price formation, see e.g. \cite{MR4215224},
the Hamiltonian of the problem does not involve a quantity which depends separately/additively on the price vector $w$ and on $q$.
\end{remark}

The Hamilton-Jacobi equation then reads:
\begin{equation}
	\label{eq:chap:MFG_model:secGeneralities:HJ}
	-\rho u(k,w) + H\left(k,\frac {\partial u}{\partial k} (k,w),w\right)=0,\quad \quad \hbox{ in } (0,+\infty).
\end{equation}

Recall that a function $\psi:\R_+^m \to \R$ is monotone if and only if for every  $z,\tilde z \in \R_+^m$,
  \begin{displaymath}
    z \le \tilde z \quad\quad \Rightarrow \quad \quad \psi(z)\leq \psi(\tilde z),
  \end{displaymath}
  where  the partial order  $\;\le\; $ on $\R^{m}$ is defined as follows: 
  \begin{displaymath}
    z\le \tilde z \quad \hbox{ if and only if }  \quad   z_i\leq \tilde z _i,\;\; \forall i\in \{1,\dots,m\}.
  .
  \end{displaymath}

We make the following assumptions on $U$ and $F$:

\medskip

\begin{assumption}[Assumptions on $U$] \label{ass:secHJ:1}
The utility   $ U:\R_+\rightarrow [-\infty, +\infty)$ has the following properties:
\begin{itemize}
\item[i)] $U$ is $C^2$ on $(0,+\infty)$.
\item[ii)] $U$ is increasing and strictly concave in $(0,+\infty)$.
\item[iii)] $\ds \lim_{c\rightarrow 0^+}U'(c) = +\infty$ and $\ds \lim_{c\rightarrow +\infty}U'(c) = 0$.
\end{itemize}
\end{assumption}

\medskip

\begin{assumption}[Assumptions on $F$]  \label{ass:secHJ:3}
  The function $F$ is concave and monotone.
For any vector $w\in (0,+\infty)^d$ and for any  $k\in \R_+$, the  {\sl net output} $f(k,w)$ defined by (\ref{eq:chap:MFG_model:secGeneralities:33}) is finite and
achieved by a unique $\ell =\ell^* (k,w)\in \R_+^d$, and  $\ell^*$  is a  $C^1$ function defined on $(0,+\infty)\times (0,+\infty)^d$.
\\ Moreover,
\begin{enumerate}
\item The function $f$ belongs to $C^0(\R_+\times (0,+\infty)^d)\cap C^1(  (0,+\infty)^{d+1}) $
\item for all $w\in  (0,+\infty)^d$,
  $f (\cdot, w):\R_+\rightarrow \R$ has the following properties:
  \begin{enumerate}
  \item[i)] $f(\cdot, w)$ is locally of class $C^{1,1}$ on $(0,+\infty)$
  \item[ii)] $f(0,w)\geq 0$, $k\mapsto f(k,w)$ is strictly concave and $\lim_{k\rightarrow 0^+} \frac{\partial f }{\partial k} (k,w) = +\infty$
  \item[iii)]  $\lim_{k\rightarrow +\infty} \frac{\partial f }{\partial k} (k,w) =-\delta $
  \end{enumerate}
\end{enumerate}
\end{assumption}

\begin{remark}\label{rem:1}
  \begin{itemize}
  \item From point 2.ii) in Assumption \ref{ass:secHJ:3}, $f(\cdot, w)$ is strictly concave. Hence,
 $ \frac{\partial f }{\partial k} (k,w)$ has a limit as $k\to+\infty$, which belongs to $[-\infty, +\infty)$.
Therefore, point 2.iii)  in Assumption \ref{ass:secHJ:3} is meaningful.
\item If $\delta =0$,
  then the strict concavity of $k\mapsto f(k,w)$ implies that it
 is increasing in $(0,+\infty)$. Then,  because  $f(0,w)\geq 0$,   $f(k,w)>0$  for all $k>0$  
and has a limit   as $k\to +\infty$, which belongs to  $(0,+\infty]$.  
\item If $\delta>0$,
  then $\lim_{k\rightarrow +\infty}f(k,w)=-\infty$, and 
$f$ is negative for $k$ large enough.
  \end{itemize}
\end{remark}

\medskip

\begin{remark}\label{rem:2}
It is clear that $-f$ is monotone with respect to $w$. The optimal quantity of the input labeled $i$ is
  \begin{equation*}
  \ell_i^*(k,w)= - \frac{\partial f }{\partial w_i} (k,w).    
  \end{equation*}
\end{remark}

\medskip



 In Section \ref{sec:optim-contr-probl-1} below, we are going to prove the following theorem:
\begin{theorem}\label{th:secHJ:main}
  Under Assumptions \ref{ass:secHJ:1} and \ref{ass:secHJ:3}, for all $w\in (0,+\infty)^d$,
  there exists a unique classical solution $u(\cdot,w)\in C^1(0,+\infty)$ of \eqref{eq:chap:MFG_model:secGeneralities:HJ} with the following property:
 there  exists a critical value $\kappa^*(w) >0$,  such that
 \begin{eqnarray}
  \label{eq:chap:MFG_model:secHJ:2}
  H_q\left(k,\frac {\partial u}{\partial k} (k,w),w\right)> 0,\quad   & \hbox{for } &0<k< \kappa^*(w),\\
  \label{eq:chap:MFG_model:secHJ:3}
  H_q\left(k,\frac {\partial u}{\partial k} (k,w),w\right)< 0,\quad   & \hbox{for } &\kappa^*(w) < k <+\infty.
\end{eqnarray}
 Here $H_q$ stands for the partial derivative of $H$ with respect to its second argument.\\
Moreover $\kappa^*(w)$ is characterized by the equation 
\begin{equation}
  \label{eq:1}
 \frac{\partial f}{\partial k}(\kappa^*,w)=\rho.
\end{equation}
The function $u(\cdot, w)$ is strictly concave on $(0,+\infty)$ and belongs to $C^2(  (0,\kappa^*(w))\cup (\kappa^*(w), +\infty))$.\\
Furthermore, $u(\cdot,w)$  is the value function of the optimal control problem (\ref{eq:chap:MFG_model:secGeneralities:valueFunction}), and $H_q(k,\partial_k u(k,w),w)$  
is the optimal investment policy of a firm with capital $k$.
\end{theorem}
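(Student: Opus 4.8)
The plan is to fix $w\in(0,+\infty)^d$ once and for all and to reduce \eqref{eq:chap:MFG_model:secGeneralities:HJ} to a single scalar ODE with one singular point. By Assumption~\ref{ass:secHJ:1}, $U'$ is a strictly decreasing $C^1$ bijection of $(0,+\infty)$ onto itself, so $c^*(q):=(U')^{-1}(q)$ is $C^1$, strictly decreasing, with $c^*(0^+)=+\infty$, $c^*(+\infty)=0^+$, and $\widetilde H(q):=\sup_{c\ge0}\{U(c)-cq\}=U(c^*(q))-q\,c^*(q)$ is $C^2$, strictly convex, with $\widetilde H'(q)=-c^*(q)$. Hence $H(k,q,w)=\widetilde H(q)+f(k,w)q$ and $H_q(k,q,w)=f(k,w)-c^*(q)$. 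By Assumption~\ref{ass:secHJ:3}, points 2.ii)--2.iii), $k\mapsto\partial_k f(k,w)$ is continuous and strictly decreasing on $(0,+\infty)$, from $+\infty$ at $0^+$ to $-\delta$ at $+\infty$, and $-\delta<\rho$; so \eqref{eq:1} has a unique solution $\kappa^*(w)\in(0,+\infty)$, and $\bar c:=f(\kappa^*(w),w)>0$ since $f(\cdot,w)$ is increasing on $(0,\kappa^*(w))$. I also note that any classical solution of \eqref{eq:chap:MFG_model:secGeneralities:HJ} necessarily has $\partial_k u>0$, for otherwise $\widetilde H(\partial_k u)=+\infty$.

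Next I would reduce the equation. Differentiating \eqref{eq:chap:MFG_model:secGeneralities:HJ} and writing $q:=\partial_k u(\cdot,w)$ gives $q\,(\partial_k f(\cdot,w)-\rho)+H_q(\cdot,q,w)\,q'=0$; so any zero of $H_q(\cdot,q,w)$ occurs only at $k=\kappa^*(w)$ — which is precisely \eqref{eq:1} — and $q$ solves
\begin{equation}\label{eq:th1-plan-ode}
q'(k)=\frac{q(k)\bigl(\rho-\partial_k f(k,w)\bigr)}{f(k,w)-c^*(q(k))},\qquad k\neq\kappa^*(w).
\end{equation}
Conversely, given a solution $q>0$ of \eqref{eq:th1-plan-ode}, the function $u:=\tfrac1\rho H(\cdot,q(\cdot),w)=\tfrac1\rho\bigl(\widetilde H(q)+f(\cdot,w)q\bigr)$ satisfies $u'=q$ (use \eqref{eq:th1-plan-ode} to simplify $u'$) and hence solves \eqref{eq:chap:MFG_model:secGeneralities:HJ} — note that the equation itself pins down the additive constant. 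Setting $c=c^*(q)$, \eqref{eq:th1-plan-ode} is in turn equivalent to the planar Hamiltonian system $\dot k=f(k,w)-c$, $\dot c=\tfrac{U'(c)}{U''(c)}\bigl(\rho-\partial_k f(k,w)\bigr)$, whose unique equilibrium is $(\kappa^*(w),\bar c)$. The whole theorem thus reduces to producing the \emph{correct} solution $q$ of \eqref{eq:th1-plan-ode}, defined on all of $(0,+\infty)$.

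This last step is where I expect the main difficulty. The point $(\kappa^*(w),q^*)$, with $q^*:=U'(\bar c)$ (so $c^*(q^*)=\bar c$), is a singularity of \eqref{eq:th1-plan-ode} — numerator and denominator both vanish. Formally, a slope $m=q'(\kappa^*(w))$ must satisfy $-(c^*)'(q^*)m^2+\rho\,m+q^*\,\partial_{kk}f(\kappa^*(w),w)=0$, a quadratic with one positive and one negative root (because $(c^*)'(q^*)<0$ and $\partial_{kk}f(\kappa^*(w),w)<0$ by strict concavity), and the sign conditions \eqref{eq:chap:MFG_model:secHJ:2}--\eqref{eq:chap:MFG_model:secHJ:3} select the negative one, so that $q$ is decreasing there; equivalently $(\kappa^*(w),\bar c)$ is a hyperbolic saddle of the planar system, its Jacobian having negative determinant $\tfrac{U'(\bar c)}{U''(\bar c)}\,\partial_{kk}f(\kappa^*(w),w)$, and $q$ is the slope of the associated one-dimensional stable manifold. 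Since $f(\cdot,w)$ is only $C^{1,1}$, rather than invoking the stable-manifold theorem I would (i) construct a local solution of \eqref{eq:th1-plan-ode} through $(\kappa^*(w),q^*)$ by a forward-invariant-region argument for the planar system, obtaining a $C^1$ branch near $\kappa^*(w)$ with the correct monotonicity; then (ii) continue it by the standard ODE theory — the right-hand side of \eqref{eq:th1-plan-ode} being locally Lipschitz wherever $f(k,w)\neq c^*(q)$ and $q>0$ — and prove it extends to all of $(0,+\infty)$, by trapping the corresponding orbit between the nullclines $\{c=f(k,w)\}$ and $\{k=\kappa^*(w)\}$ and using $\partial_k f\to+\infty$ as $k\to0^+$ and $\partial_k f\to-\delta$ as $k\to+\infty$ to rule out blow-up of $q$, extinction $q\downarrow0$ at a finite $k>0$, and any further change of sign of $f(k,w)-c^*(q(k))$. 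This construction simultaneously yields $q$ continuous on $(0,+\infty)$, $q\in C^1\bigl((0,\kappa^*(w))\cup(\kappa^*(w),+\infty)\bigr)$, $q'<0$ there (hence $u$ strictly concave), and the sign statements \eqref{eq:chap:MFG_model:secHJ:2}--\eqref{eq:chap:MFG_model:secHJ:3}, namely $H_q(k,q(k),w)=f(k,w)-c^*(q(k))$ is $>0$ on $(0,\kappa^*(w))$ and $<0$ on $(\kappa^*(w),+\infty)$.

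It then remains to record the remaining assertions. Regularity: $u=\tfrac1\rho H(\cdot,q(\cdot),w)\in C^1(0,+\infty)$ as $q$ is continuous and $u'=q$; on $(0,\kappa^*(w))\cup(\kappa^*(w),+\infty)$, where $q$ is $C^1$, $u\in C^2$, and indeed $u''=-u'\bigl(\partial_k f(k,w)-\rho\bigr)/H_q(k,u'(k),w)$ is continuous there because it involves only the continuous function $\partial_k f(\cdot,w)$ (not $\partial_{kk}f$) and $H_q\neq0$; at $\kappa^*(w)$ the merely Lipschitz character of $\partial_k f(\cdot,w)$ obstructs $C^2$. Identification with the value function and optimal policy: by a standard verification argument, for every admissible triple $(c,\ell,k)$ one has $\tfrac{d}{dt}\bigl(u(k(t),w)e^{-\rho t}\bigr)\le-e^{-\rho t}U(c(t))$ — since $\rho u=H(k,\partial_k u,w)\ge U(c)+\partial_k u\,(F(k,\ell)-w\cdot\ell-\delta k-c)=U(c)+\partial_k u\,\dot k$ — with equality along the trajectory of the planar system issued from the graph $c=c^*(q(k))$; integrating and letting $T\to+\infty$ (the transversality term $u(k(T),w)e^{-\rho T}$ vanishing because $u$ grows at most linearly by concavity while $k(T)$ grows subexponentially, and $k(T)\to\kappa^*(w)$ along the optimal trajectory, which stays bounded away from $0$) identifies $u$ with the value function \eqref{eq:chap:MFG_model:secGeneralities:valueFunction}, the supremum being attained; that trajectory is admissible because $\dot k>0$ near $k=0$, so the state constraint is never binding, and $H_q(k,\partial_k u(k,w),w)=f(k,w)-c^*(\partial_k u(k,w))=\dot k$ along the optimum is the optimal investment policy. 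Uniqueness: another $C^1$ solution with the stated structure also has $\partial_k u>0$, turning point $\kappa^*(w)$, value $q^*$ and the above negative slope at $\kappa^*(w)$, and solves \eqref{eq:th1-plan-ode}; the local uniqueness near $\kappa^*(w)$ together with ODE uniqueness away from it force the two derivatives, hence (via \eqref{eq:chap:MFG_model:secGeneralities:HJ}) the two solutions, to coincide.
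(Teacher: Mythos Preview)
Your approach is correct in spirit but genuinely different from the paper's. The paper never differentiates \eqref{eq:chap:MFG_model:secGeneralities:HJ} to obtain an ODE for $q=\partial_k u$; instead it works with $u$ itself. The key observation there is that for each $k$ the map $q\mapsto H(k,q,w)$ is strictly convex with a unique minimum at $q=U'(f(k,w))$ (when $f(k,w)>0$), so one may invert its increasing and decreasing branches to obtain two right-hand sides $\cF^\uparrow$, $\cF^\downarrow$ and solve $u'=\cF^{\uparrow}(k,u)$ on $(0,\kappa^*]$ and $u'=\cF^{\downarrow}(k,u)$ on $[\kappa^*,+\infty)$, with the common terminal condition $u(\kappa^*)=U(f(\kappa^*,w))/\rho$. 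Since $(\kappa^*,U(f(\kappa^*,w))/\rho)$ sits on the boundary of the domains where $\cF^{\uparrow},\cF^{\downarrow}$ are Lipschitz, existence is obtained by shooting from nearby interior data and passing to the limit, and uniqueness by a monotonicity argument on $\cF^{\downarrow},\cF^{\uparrow}$. This forces the paper to treat $\delta=0$ and $\delta>0$ separately, because when $\delta>0$ the function $f(\cdot,w)$ becomes negative past some $k_0>\kappa^*$ and $q\mapsto H(k,q,w)$ is then globally decreasing; an extra step extends $u$ beyond $k_0$. Your route---the classical Ramsey phase-plane analysis for $(k,c)$ with saddle $(\kappa^*,\bar c)$---bypasses this case distinction and is arguably more transparent dynamically; the price is that the delicate existence/uniqueness of the stable branch through the singular point of \eqref{eq:th1-plan-ode} must be handled directly under only $C^{1,1}$ regularity of $f$, which you flag but only sketch.

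Two small points. First, your final ``Uniqueness'' paragraph via local ODE uniqueness at $\kappa^*(w)$ is both unnecessary and, as written, not justified: a $C^1$ solution need not have a well-defined slope $q'(\kappa^*(w))$, and local uniqueness for \eqref{eq:th1-plan-ode} at the singular point is exactly what is hard here. Fortunately your verification argument already yields uniqueness (any $C^1$ solution with \eqref{eq:chap:MFG_model:secHJ:2}--\eqref{eq:chap:MFG_model:secHJ:3} equals the value function), which is also how the paper proceeds. Second, in the verification step the transversality claim ``$k(T)$ grows subexponentially'' is not quite right for an arbitrary admissible trajectory: from $\dot k\le f(k,w)$ and the concavity bound $f(k,w)<f(\kappa^*,w)+\rho(k-\kappa^*)$ for $k>\kappa^*$ one only gets $k(T)=O(e^{\rho T})$, so $u(k(T))e^{-\rho T}$ is merely bounded, and a slightly sharper argument is needed; the paper is equally terse here.
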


\begin{remark}
  \begin{enumerate}
  \item The existence of $\kappa^*(w)>0$ such that the
    capital of all firms converges towards $\kappa^*(w)$ is known as the golden rule
    of investment \cite[Chapter 7]{Allais1947}.
	\item We will see in Section \ref{sec:secGeneralities:fokk-planck-equat} below that  a firm with an initial capital $k_0\not=\kappa^*(w)$ never reaches this target capital $\kappa^*(w)$.
\end{enumerate}
\end{remark}

The difficulty in the proof of Theorem \ref{th:secHJ:main} lies in the fact that the Hamiltonian $H(k,q,w)$ is defined only for nonnegative values of $q$ (i.e. $H(k,q,w)=+\infty$ if $q<0$) 
and may blow up as $q\to 0_+$. Hence, classical results on viscosity solutions of Hamilton-Jacobi equations for state constrained optimal control problems cannot be applied in a straightforward manner. We will use a different strategy: in particular, in the simplest case in which $\delta = -\lim_{k\to \infty}   \frac {\partial f}{\partial k} (k,w)=0$, our proof of existence  is based on the fact that the function $q\mapsto H(k,q,w)$ is strictly convex, strictly decreasing in $(0,q_{\min})$ and strictly increasing in $(q_{\min}, +\infty)$, where  $q_{\min}=U'(f(k,w))$, see Lemma \ref{lem:secHJ:Hmin} below. In this case,  our strategy consists in solving two ordinary differential equations by means of shooting methods: the first (resp. second) one involves the  inverse of the increasing (resp. decreasing) part of $ q\mapsto H(k,q,w)$.

Note that a different strategy has been studied in \cite{Petit2022phd}; it was inspired by
a method proposed in \cite{Santambrogio2017} for studying Ramsey model of optimal growth with non local externalities. It consists in introducing a relaxed lagrangian version of the original optimal control problem, then obtaining compactness properties which lead to the existence of an optimal control and of a solution of the original problem.
However, this approach needs an  assumption stronger than Assumption  \ref{ass:secHJ:3}.

\subsection{The distribution of capital  given the prices of inputs}
\label{sec:distrib_capital}
The distribution of capital corresponding to the optimal investment policy of the firms is  a bounded positive measure on $(0,+\infty)$.
 In our model, its density  is characterized by the following continuity equation: 
\begin{equation}
  \label{eq:chap:MFG_model:FP}
  \frac{\partial}{\partial k}\left(m(\cdot,w) H_q\left(\cdot,\frac {\partial u}{\partial k} (\cdot,w),w\right)\right) = \eta(\cdot, u(\cdot,w)) - \nu m(\cdot,w),
\end{equation}
which may first be understood in the sense of distributions.  The parameter $\nu\ge 0$ is the extinction rate of the firms
and the source term $\eta $ stands for the exogenous creation of firms.
Note that the latter term depends on the value  $u$. We make the following assumption:
\begin{assumption}[Assumptions on $\nu$ and $\eta$] \label{ass:secFP:1}
We assume that $\nu$ is positive ($\nu>0$),  that $\eta$ is a continuous function on $[0,+\infty)\times \R$, and that there exists a continuous probability density  $\hat \eta:\R_+\rightarrow \R_+$ with a compact support contained in $(0,+\infty)$ 
and a positive constant $\hat c\ge 1$ such that  for all $k\ge 0$ and $v\in \R$, 
\begin{displaymath}
  \frac 1  {\hat c} \hat  \eta(k) \le \eta( k,v) \le   \hat c \hat  \eta(k).
\end{displaymath}
\end{assumption}
Equation \eqref{eq:chap:MFG_model:FP} is supplemented with the condition
\begin{equation}\label{eq:12}
   \nu \int_{\R_+}  m(k,w) dk= \int_{\R_+} \eta(k, u(k,w))  dk.
\end{equation}
Since $H_q(k, \frac {\partial u}{\partial k} (k,w),w)>0$ for small values of $k$ and $H_q(k, \frac {\partial u}{\partial k} (k,w),w)<0$ for large values of $k$,  see Theorem \ref{th:secHJ:main}, \eqref{eq:12} is a weak way to say that the flux \\
$m(\cdot,w)H_q(\cdot, \frac {\partial u}{\partial k} (\cdot,w),w)$ vanishes at $k=0$ and as $k\to +\infty$.

\begin{proposition}\label{prop:FP_1}
  Under  Assumptions \ref{ass:secHJ:1}, \ref{ass:secHJ:3} and  \ref{ass:secFP:1}, 
 the unique solution of \eqref{eq:chap:MFG_model:FP}-\eqref{eq:12} is given by 
\begin{equation}
  \label{eq:chap:MFG_model:FP10}
  \begin{split}
     m(k,w)=\\
    \left\{ \begin{array}[c]{rcl}
              \ds \frac{1}{	b(k,w) }    \int_0^{k}\eta(\kappa, u(\kappa,w))
              \exp\left(-\ds \int_\kappa^k\frac{\nu}{    	b(z,w)}dz\right)d\kappa , \quad&\text{ if } &\quad k <\kappa^*(w),\\
              \ds  -\frac{1}{	b(k,w)}\int_k^{+\infty}\eta(\kappa, u(\kappa,w))\exp
              \left(\ds \int_k^\kappa\frac{\nu}{ b(z,w)}dz\right)d\kappa, \quad  &\text{ if } & \quad   k>\kappa ^*(w),
            \end{array}\right.
        \end{split}
  \end{equation}
 where, for brevity, $b(k,w)$ stands for the optimal investment when the capital is $k$:
 \begin{equation}
   \label{eq:b}
	b(k,w)=H_q\left(k,\frac {\partial u} {\partial k}(k,w),w\right).   
 \end{equation}
\end{proposition}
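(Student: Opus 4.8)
The plan is to reduce the continuity equation \eqref{eq:chap:MFG_model:FP} separately on the two half-lines $(0,\kappa^*(w))$ and $(\kappa^*(w),+\infty)$ to a linear scalar ODE for the \emph{flux} $\mu(k):=m(k,w)\,b(k,w)$, integrate it, and then use the normalisation \eqref{eq:12} together with the sign of $b$ to fix the two constants of integration to the values appearing in \eqref{eq:chap:MFG_model:FP10}. I first record the facts I shall use about $b(\cdot,w)=H_q(\cdot,\partial_k u(\cdot,w),w)$, all consequences of Theorem \ref{th:secHJ:main}: it is continuous on $(0,+\infty)$, of class $C^1$ on $(0,\kappa^*(w))$ and on $(\kappa^*(w),+\infty)$, strictly positive on the first interval, strictly negative on the second, vanishes at $\kappa^*(w)$, and moreover vanishes there at a controlled rate, so that $\int_{\cdot}^{\kappa^*(w)}|b(z,w)|^{-1}\,dz=+\infty$ from either side. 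By Assumption \ref{ass:secFP:1} the function $k\mapsto\eta(k,u(k,w))$ is continuous with compact support in $(0,+\infty)$, and in particular vanishes near $k=0$ and for $k$ large. On each of the two intervals $b$ has no zero, and there \eqref{eq:chap:MFG_model:FP} is exactly $\mu'+\frac{\nu}{b}\mu=\eta(\cdot,u(\cdot,w))$, which is solved with the integrating factor $\exp\bigl(\int\nu/b\bigr)$.

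For the existence part I take $m$ defined by \eqref{eq:chap:MFG_model:FP10} and set $\mu:=m\,b$, so that $\mu$ is the corresponding integral on each side, and I check: (i) the integrals converge, because $\eta(\cdot,u(\cdot,w))\in L^1(\R_+)$ and the exponential weights are bounded by $1$ (their exponents have the correct sign on each interval); (ii) $\mu$ extends continuously by the value $0$ to $k=0$, to $k=\kappa^*(w)$ from both sides, and to $k=+\infty$ — the only nonobvious point is the interior one $\kappa^*(w)$, where dominated convergence applies once one notes that $\int_\kappa^k\nu/b\to+\infty$ as $k\uparrow\kappa^*(w)$ and $\int_k^\kappa\nu/b\to-\infty$ as $k\downarrow\kappa^*(w)$, precisely because $1/|b|$ is not integrable at $\kappa^*(w)$; (iii) on each interval $\mu\in C^1$ and differentiating under the integral sign gives $\mu'=\eta(\cdot,u(\cdot,w))-\nu m$; (iv) $m=\mu/b\ge 0$, since $\mu$ and $b$ have the same sign on each interval; (v) for $0<a<k<\kappa^*(w)$ one has $\nu\int_a^k m=\int_a^k\eta(\cdot,u(\cdot,w))-\mu(k)+\mu(a)$ by (iii), and letting $a\to 0^+$, $k\to\kappa^*(w)^-$ and using (ii) and monotone convergence gives $\nu\int_0^{\kappa^*(w)}m=\int_0^{\kappa^*(w)}\eta(\cdot,u(\cdot,w))<+\infty$, and similarly on $(\kappa^*(w),+\infty)$; adding these two identities shows simultaneously that $m\in L^1(\R_+)$ and that \eqref{eq:12} holds; (vi) once $m\in L^1$ is known, $\mu$ is continuous on $(0,+\infty)$ and lies in $W^{1,1}_{\mathrm{loc}}(0,+\infty)$ with $\mu'=\eta(\cdot,u(\cdot,w))-\nu m$, so its distributional derivative is its pointwise derivative and \eqref{eq:chap:MFG_model:FP} holds across $\kappa^*(w)$ as well.

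For uniqueness let $m\in L^1(\R_+;\R_+)$ be any solution of \eqref{eq:chap:MFG_model:FP}--\eqref{eq:12}. Then $\mu:=m\,b\in W^{1,1}_{\mathrm{loc}}(0,+\infty)$ with $\mu'=\eta(\cdot,u(\cdot,w))-\nu m\in L^1(\R_+)$, so $\mu$ is continuous on $(0,+\infty)$ and has finite one-sided limits at $0$, at $\kappa^*(w)$ and at $+\infty$. On $(0,\kappa^*(w))$ one has $\mu\ge 0$, and near $k=0$, where $\eta(\cdot,u(\cdot,w))$ vanishes, $\mu'=-\nu m\le 0$; hence $\mu(0^+)\ge 0$, and symmetrically $\mu(+\infty)\le 0$. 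Integrating $\mu'=\eta(\cdot,u(\cdot,w))-\nu m$ over $\R_+$ and invoking \eqref{eq:12} yields $\mu(+\infty)-\mu(0^+)=0$, so $\mu(0^+)=\mu(+\infty)=0$. Finally, on $(0,\kappa^*(w))$ the linear ODE with prescribed value $0$ at $k=0$ has a unique solution — two would differ by $C\exp(-\int_{k_0}^k\nu/b)$, whose limit at $0^+$ is a nonzero element of $(0,+\infty]$, forcing $C=0$ — and this solution is exactly the first branch of \eqref{eq:chap:MFG_model:FP10}; the same reasoning on $(\kappa^*(w),+\infty)$ with the value $0$ at $+\infty$ gives the second branch. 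Hence $m$ is given by \eqref{eq:chap:MFG_model:FP10}.

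The delicate point, and the one I expect to require real work, is the matching at the golden-rule capital $\kappa^*(w)$, where $b$ vanishes. One has to be sure that the two one-sided ODE solutions glue into a genuine distributional solution on all of $(0,+\infty)$ without the flux developing an atom at $\kappa^*(w)$, and, for uniqueness stated in a class of measures rather than densities, that a solution cannot hide a Dirac or other singular mass there; both boil down to controlling the rate at which $b(\cdot,w)$ — equivalently $\partial_k u(\cdot,w)$ — tends to $0$ near $\kappa^*(w)$, so as to guarantee $\int_{\cdot}^{\kappa^*(w)}|b|^{-1}=+\infty$ and thus $\mu\to 0$ there. This is exactly the regularity information produced in Section \ref{sec:optim-contr-probl-1}; the remaining verifications (convergence of the integrals, differentiation under the integral sign, the elementary facts about the scalar ODE) are routine.
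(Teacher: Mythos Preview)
Your approach is essentially the same as the paper's: reduce \eqref{eq:chap:MFG_model:FP} to a first-order linear ODE for the flux $\mu=b\,m$, solve it on each side of $\kappa^*(w)$, and use the fact that $|b(k)|\le M|\kappa^*(w)-k|$ near $\kappa^*(w)$ (Lemma \ref{lem:secFP:1}, not Theorem \ref{th:secHJ:main}) to guarantee $\int^{\kappa^*(w)}1/|b|=+\infty$, which forces $\mu$ to vanish at $\kappa^*(w)$.

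There are two differences worth noting. First, your device for killing the integration constants is different and cleaner: you use \eqref{eq:12} directly to obtain $\mu(+\infty)-\mu(0^+)=0$, combine it with the sign constraints $\mu(0^+)\ge 0$, $\mu(+\infty)\le 0$, and conclude both limits vanish. The paper instead writes the general solution with free constants $A,B$ and argues that integrability of $m$ forces $A=B=0$; your argument is more transparent and avoids any discussion of the behaviour of $b$ near $k=0$ or $k=+\infty$. Second, and more importantly, the paper works with measure-valued solutions from the outset: it performs the Lebesgue decomposition $m=m_{\mathrm{ac}}+\lambda\,\delta_{\kappa^*}$ and explicitly rules out $\lambda>0$ by testing \eqref{eq:chap:MFG_model:FP} against a family $(\varphi_\varepsilon)$ of bump functions concentrated at $\kappa^*(w)$, using the bound $|\varphi_\varepsilon'\,b|\le 2M$ coming from Lemma \ref{lem:secFP:1}. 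You correctly identify this as the delicate point in your final paragraph but do not carry it out; your uniqueness argument as written applies only in the class $L^1(\R_+;\R_+)$. Since the paper states the result for solutions in the sense of distributions (and the natural object is a bounded positive measure), this step is not optional, and the paper's test-function argument is the missing ingredient.
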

A key step in  the proof of Proposition \ref{prop:FP_1} consists of showing that the quantities in the right hand side of    \eqref{eq:chap:MFG_model:FP10} are well defined. This comes from an intermediate result which states that, under Assumptions \ref{ass:secHJ:1} and \ref{ass:secHJ:3}, 
$\left|H_q(k,\frac {\partial u} {\partial k}(k,w),w) \right|= O(| k-\kappa^*(w)|)$ for  $k$ in a neighborhood of $\kappa^* (w)$.
The latter information implies that, with the optimal investment strategy, a firm starting with a capital $k_0\not= \kappa^* (w)$  never reaches  $\kappa^* (w)$,
 even though its capital does tend to $\kappa^* (w)$ as $t\to \infty$.

\begin{remark} \label{sec:rk_support}
  Note that  \eqref{eq:chap:MFG_model:FP10}  implies that  $   \frac 1{\nu \hat c} \le   \int_{\R_+} m(k,w) dk\le \frac {\hat c} \nu$ (the two bounds do not depend on $w$).
Moreover, the support of $m(\cdot, w)$  is contained in the interval
\begin{displaymath}
  \left [ \min\left( \min\{k\in \hbox{support}(\hat \eta)\}, \kappa^*(w)\right),
    \max\left( \max\{k\in \hbox{support}(\hat \eta)\}, \kappa^*(w)\right)  \right ].
\end{displaymath}
Hence, from the continuity of $w\mapsto \kappa^*(w)$, for any compact set $K\subset \R_+^d$, there exists
a compact interval of  $\R_+$  containing the supports of  $m(\cdot, w)$  for all $w\in K$.
\end{remark}

\subsection{Equilibria}
\label{sec:equilibria}
 The supply of inputs is assumed to be of the form $S(w)$, where $w\in \R_+^d$  is the collection of prices.\\
At the equilibrium, we require that the  clearing condition on the markets of inputs holds, i.e.
\begin{equation}
  \label{eq:clearing_condition}
  S(w)= \int_{\R_+}  \ell^* \left(k, w\right)   m(k,w) dk.
\end{equation}
where $ \ell_i^* \left(k, w\right) =   - \frac{\partial f }{\partial w_i} (k,w)$, and $m(\cdot, w)$ is the solution of   \eqref{eq:chap:MFG_model:FP}-\eqref{eq:12}.\\
We  aim at proving the existence of equilibria by using Brouwer degree theory.
This requires additional assumptions:
 \begin{assumption}[Assumptions on the supply]
   \label{ass:chap:MFG_model:S}
   The function  $S: \;\R_+^d\rightarrow \R_+^d$ is of the form $S(w)= D_w\Phi(w)$,
   where 
  \begin{enumerate}
  \item $\Phi:\; \R_+^d\rightarrow \R $ is $C^1$ regular and strictly convex
  \item $\Phi$ is bounded from below  (for example by $0$)
  \item $\Phi$ is coercive, i.e.  $\lim_{\|w\|\to \infty} \Phi(w)=+\infty$.
  \end{enumerate}
 \end{assumption}
  \begin{remark}
    \label{ass:chap:MFG_model:Sr1}
    The Legendre-Fenchel transform of $\Phi$,  $\Phi^*(S)= \sup_{w\in \R_+^d} S\cdot w- \Phi(w)$  is  convex and semi-continuous  on $ \R_+^d$ with values in $(-\infty,+\infty]$. It can be interpreted as a collective cost or disutility associated to  the supply of inputs. Concerning raw materials, it may be linked to their scarsity or to the environmental/social damages caused by their production. For manpower, the disutility captures negative  effects of labour on the welfare  of the workers. 
  \end{remark}

  \paragraph{Examples}
    \begin{enumerate}
    \item If, for any $i=1,\dots, d$, the $i$-th component of the supply function is a non negative, continuous and increasing function of $w_i$, i.e. $S_i(w)=S_i(w_i)$, then  Assumption \ref{ass:chap:MFG_model:S} is satisfied with $\Phi(w)= \sum_{i=1}^d \int_0 ^{w_i}  S_i(t) dt$.
    \item Given two positive numbers $\sigma$ and $w_0$, if
      \begin{displaymath}
        S_i(w)= \frac { \exp( w_i/\sigma)} {\sum_{j=0} ^d \exp( w_j/\sigma)}
      \end{displaymath}
       for all $i=1,\dots, d$,\\
      then  Assumption \ref{ass:chap:MFG_model:S} is satisfied with $\Phi(w)= \sigma \log\left(\sum_{j=0}^d \exp( w_j/\sigma)\right)$. In the limit $\sigma\to 0$, the price $w_0$ can be seen as a reserve price under which the production factors cannot be acquired.
    \end{enumerate}
  
 Set
     \begin{equation}\label{eq:supply0} 
      g(k,w)=   f(k,w) +\delta k=\sup_{\ell\in \R_+^d} F(k,\ell) -w\cdot \ell, 
     \end{equation}
 which can be seen as the Legendre-Fenchel transform of $\ell\mapsto -F(k,\ell)$ evaluated at $-w$. From Assumption   \ref{ass:secHJ:3}, 
 $g(k,w)$ is finite, nonnegative and achieved by the unique maximizer
 $\ell^* (k,w)\in \R_+^d$, and   $\ell^*$  is $C^1$ on $(0,+\infty)\times (0,+\infty)^d$.

    A further technical assumption involving both $F$ and the fixed measure $\hat \eta$ arising in  Assumption  \ref{ass:secFP:1} will be needed:
 \begin{assumption}
   \label{ass:chap:MFG_model:technical_assump}
   Let $\one\in \R^d$ be defined by   $\one=(1,\dots,1)$. We assume that there exists $\epsilon\in (0,1)$ such that for all $\lambda\in [0,1]$, if
   \begin{equation}\label{eq:10004}
     \begin{split}
&     \Phi(w)+\int_0^{+\infty} g(k,w) \Bigl( (1-\lambda) d\hat \eta (k)  +   \lambda dm(k,w)\Bigr) \\ \le    & \Phi(\one)+\int_0^{+\infty} g(k,\one) \Bigl( (1-\lambda) d\hat \eta (k)  +   \lambda dm(k,w)\Bigr)       
     \end{split}
   \end{equation}
   then
   \begin{equation}
     \label{eq:10012}
     w\in\left(\epsilon, \frac 1 \epsilon\right)^d.
   \end{equation}

 \end{assumption}
 \begin{remark}
   The proof that  Assumption \ref{ass:chap:MFG_model:technical_assump} holds for classical examples of production functions will be given in Section  \ref{sec:equilibrium}.
 \end{remark}
 
 Section \ref{sec:equilibrium} will be devoted to the proof of the following existence result:
 \begin{theorem}[Existence of equilibria]\label{th_ex_equil}
   Under Assumptions  \ref{ass:secHJ:1}, \ref{ass:secHJ:3},  \ref{ass:secFP:1}, \ref{ass:chap:MFG_model:S} and  \ref{ass:chap:MFG_model:technical_assump},
there exists an equilibrium, i.e. $w\in (0,+\infty)^d$ such that
 the market clearing condition \eqref{eq:clearing_condition} holds with $m(\cdot,w)$ and $u(\cdot,w)$ uniquely defined respectively by Proposition \ref{prop:FP_1} and Theorem \ref{th:secHJ:main}.
\end{theorem}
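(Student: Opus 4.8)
The plan is to apply Brouwer topological degree to the continuous map $T:\R_+^d\to\R^d$ defined by the excess demand
\[
T(w) = \int_{\R_+}\ell^*(k,w)\,m(k,w)\,dk - S(w),
\]
and to show $T$ has a zero in $(0,+\infty)^d$. First I would record the regularity of $T$: by Theorem \ref{th:secHJ:main}, $\kappa^*(w)$ and $u(\cdot,w)$ depend continuously on $w$; by Proposition \ref{prop:FP_1} together with Remark \ref{sec:rk_support}, $m(\cdot,w)$ is given by the explicit formula \eqref{eq:chap:MFG_model:FP10}, its total mass is pinned between $1/(\nu\hat c)$ and $\hat c/\nu$, and all the supports stay inside one compact interval as $w$ ranges over a compact set; and $\ell^*(k,w)=-\partial_w f(k,w)$ is $C^1$ on $(0,+\infty)\times(0,+\infty)^d$ by Assumption \ref{ass:secHJ:3}. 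Passing to the limit in the integral (dominated convergence, using the uniform support and mass bounds) gives continuity of $w\mapsto\int\ell^*(k,w)m(k,w)\,dk$ on $(0,+\infty)^d$, and $S$ is continuous by Assumption \ref{ass:chap:MFG_model:S}. One subtlety to address is behaviour near $\partial\R_+^d$, i.e.\ when some $w_i\to 0$ or $w_i\to\infty$; I expect this to be handled by the a priori localization coming from Assumption \ref{ass:chap:MFG_model:technical_assump}, so that the degree is computed on a box $(\epsilon,1/\epsilon)^d$ away from the boundary.

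The core of the argument is a homotopy between $T$ and a map with an obvious nonzero degree. The natural choice, dictated by Assumptions \ref{ass:chap:MFG_model:S} and \ref{ass:chap:MFG_model:technical_assump} and by the variational structure \eqref{eq:supply0}, is to introduce for $\lambda\in[0,1]$ the functional
\[
J_\lambda(w) = \Phi(w) + \int_{\R_+} g(k,w)\,\bigl((1-\lambda)\,d\hat\eta(k) + \lambda\,dm(k,w)\bigr),
\]
and to use its gradient flow / the map $w\mapsto D_w J_\lambda(w)$ as the homotopy: since $\Phi$ is strictly convex and coercive and $g(k,\cdot)$ is convex (being a Legendre–Fenchel transform), $J_\lambda$ is strictly convex in $w$ for frozen $m$, and $D_w g(k,w) = \ell^*(k,w)$ by Remark \ref{rem:2}, so at $\lambda=1$ a critical point of $w\mapsto\Phi(w)+\int g(k,w)\,dm(k,w)$ (with $m$ self-consistent) is exactly an equilibrium. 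At $\lambda=0$ the map is $w\mapsto S(w)-\int\ell^*(k,w)\,d\hat\eta(k)$, which is the gradient of the strictly convex coercive function $\Phi(w)-\int F(k,\ell^*(k,w))\,d\hat\eta(k)+\ldots$ — more precisely of a strictly convex coercive function — hence has a unique zero and degree $\pm1$ on a large box. The key point is that Assumption \ref{ass:chap:MFG_model:technical_assump} guarantees: for every $\lambda$, any $w$ at which the homotopy vanishes (equivalently, any minimizer of $J_\lambda$, which by \eqref{eq:10004} with the reverse inequality forces the minimum to be no worse than the value at $\one$) lies in $(\epsilon,1/\epsilon)^d$. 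Therefore the homotopy never vanishes on the boundary of $(\epsilon,1/\epsilon)^d$, the Brouwer degree is homotopy-invariant along $\lambda$, and it equals the (nonzero) degree at $\lambda=0$; hence $T$ vanishes somewhere in $(\epsilon,1/\epsilon)^d$, which is the desired equilibrium.

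There is one genuine technical difficulty to handle carefully: the measure $m(\cdot,w)$ appearing inside $J_\lambda$ is itself a function of $w$, so $J_\lambda$ is \emph{not} literally the potential whose gradient is the homotopy — the term $\int g(k,w)\,dm(k,w)$ has an extra $w$-dependence through $m$. The clean way around this is to define the homotopy directly as
\[
\mathcal H(\lambda,w) = S(w) - \int_{\R_+}\ell^*(k,w)\,\bigl((1-\lambda)\,d\hat\eta(k)+\lambda\,dm(k,w)\bigr),
\]
which at $\lambda=1$ is exactly $-T(w)$, and to use Assumption \ref{ass:chap:MFG_model:technical_assump} purely as an a priori estimate on the zero set of $\mathcal H$: the assumption is stated precisely so that $\mathcal H(\lambda,w)=0$ (i.e.\ $w$ minimizes $w'\mapsto\Phi(w')+\int g(k,w')((1-\lambda)d\hat\eta+\lambda dm(k,w))$ \emph{with the original $m(\cdot,w)$ held fixed in the measure}, which is exactly the condition \eqref{eq:10004} comparing to $w=\one$) implies $w\in(\epsilon,1/\epsilon)^d$. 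The remaining work — continuity of $\mathcal H$ in $(\lambda,w)$, the explicit computation of $\deg(\mathcal H(0,\cdot),(\epsilon,1/\epsilon)^d,0)=\pm1$ via the strict convexity of its potential $\Phi(w)-\int g(k,w)\,d\hat\eta(k)$ wait, sign: of $\Phi(w)-\int g(k,w)\,d\hat\eta(k)$ being strictly convex and coercive (this uses coercivity of $\Phi$ dominating the concave-in-a-suitable-sense term, to be checked), and invariance of degree — is then routine. The main obstacle is thus not the degree machinery but verifying that Assumption \ref{ass:chap:MFG_model:technical_assump} indeed applies to the zero set of the homotopy and that the $\lambda=0$ map has a strictly convex coercive potential, so that its degree is nonzero.
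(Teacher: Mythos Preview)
Your approach is correct and essentially the same as the paper's. The paper also sets up a homotopy in $\lambda\in[0,1]$ interpolating between the fixed measure $\hat\eta$ and the self-consistent $m(\cdot,w)$, and uses Assumption~\ref{ass:chap:MFG_model:technical_assump} exactly as you describe to confine the zero set to $(\epsilon,1/\epsilon)^d$. The one difference is that instead of your gradient map $\mathcal H(\lambda,w)$, the paper applies degree theory to $\chi(w,\lambda)=w-T_\lambda(w)$ where
\[
T_\lambda(w)=\argmin_{w'}\Bigl\{\Phi(w')+\int g(k,w')\bigl[(1-\lambda)\,d\hat\eta(k)+\lambda\,dm(k,w)\bigr]\Bigr\}.
\]
The zero sets coincide (since $\mathcal H(\lambda,w)=0$ iff $w$ minimizes the strictly convex inner functional iff $w=T_\lambda(w)$), but the paper's choice makes the $\lambda=0$ degree computation immediate: $T_0$ is a constant $t_0$, so $\chi(\cdot,0)=\mathrm{id}-t_0$ has degree~$1$. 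Your route instead requires the (standard, but not one-line) fact that the gradient of a strictly convex coercive $C^1$ function has degree $1$ at its unique zero.

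Two small corrections. First, the potential at $\lambda=0$ is $\Phi(w)+\int g(k,w)\,d\hat\eta(k)$ with a \emph{plus}, since $\ell^*_i=-\partial_{w_i}g$ and $S=D_w\Phi$; with the correct sign, strict convexity and coercivity are immediate from Assumption~\ref{ass:chap:MFG_model:S} and $g\ge0$, so your parenthetical worry dissolves. Second, continuity of $w\mapsto m(\cdot,w)$ is not really by dominated convergence (there is no obvious pointwise convergence of the densities) but by weak-$*$ convergence: the paper obtains it by passing to the limit in the distributional form of \eqref{eq:chap:MFG_model:FP}, using the local $C^1$ convergence of $u(\cdot,w_n)$ to $u(\cdot,w)$ (which gives uniform convergence of the drift $H_q(\cdot,\partial_k u(\cdot,w_n),w_n)$ on the common compact support).
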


\subsection{Classical examples of utility and production functions}
\label{sec:import-exampl-util}
 \subsubsection{Examples of utility functions}
  The  constant relative risk aversion (CRRA) utility  is a common example of  a utility that satisfies Assumption  \ref{ass:secHJ:1}: 
   \begin{displaymath}
     U(c) = \ln(c)\quad    \hbox{ or } \quad  U(c) = \frac 1 b c^b \quad \hbox { with }b\in(0,1)
   \end{displaymath}

\medskip

\subsubsection{Examples of production functions}
\begin{enumerate}
\item A classical example is the Cobb-Douglas  function:
  \begin{displaymath}
F(k,\ell)= A k^\alpha \ell^\beta,    
  \end{displaymath}
 where  $\beta \in (0,1)^d$, $\sum_{i=1}^d \beta_i <1$, 
  $\ell^\beta=\prod_{i=1}^d \ell_i^{\beta_i}$,   and  $0< \alpha < 1-\sum_{i=1}^d \beta_i$.
Let us set $|\beta|=\sum_{i=1}^d \beta_i$.
   In this example, the parameters $\beta$ and $\alpha$  respectively stand for the  elasticities of the output 
with respect to the different inputs  and to the capital, and $A>0$ is a global factor of productivity.
The net output is given by
\begin{equation}
  \label{eq:chap:examples:net_output_CobbDouglas}
  f(k,w) = (1-|\beta|) \left(Ak^\alpha\prod_{i = 1}^d\left(\frac{\beta_i}{w_i}\right)^{\beta_i}\right)^\frac{1}{1-|\beta|}-\delta k.
\end{equation}
It can be checked that the first order partial derivatives of $f$ with respect to $k$ and $w_i$ are  
\begin{equation}
  \label{eq:dfdk_cobb_douglas}
  \frac{\partial f}{\partial k}(k,w) = \alpha \left(A\prod_{j = 1}^d\left(\frac{\beta_j}{w_j}\right)^{\beta_j}\right)^\frac{1}{1-|\beta|}
k^{-\frac{1-\alpha-|\beta|}{1-|\beta|}}-\delta,
\end{equation}
and
\begin{equation}
  \label{eq:dfdwcobbdouglas}
  	\frac{\partial f}{\partial w_i}(k,w) = - \left(Ak^\alpha\prod_{j = 1}^d\left(\frac{\beta_j}{w_j}\right)^{\beta_j}\right)^\frac{1}{1-|\beta|}\frac{\beta_i}{w_i}\le 0.
      \end{equation}
It is easy to see that Assumption \ref{ass:secHJ:3} is satisfied. In particular, \\$\lim_{k\to +\infty }  \frac{\partial f}{\partial k}(k,w) =-\delta$.
The capital $\kappa^*(w)$ in  \eqref{eq:1} is given by
\begin{equation}	
  \label{eq:chap:examples:sec:Cobb_Douglas:target_capital}
  \kappa^*(w) = \left(\frac{\alpha}{\alpha+\rho}\right)^\frac{1-|\beta|}{1-\alpha-|\beta|}
  \left(A\prod_{j = 1}^d\left(\frac{\beta_j}{w_j}\right)^{\beta_j}\right)^\frac{1}{1-\alpha-|\beta|}.
\end{equation}

\item We now consider a   production  function with a constant elasticity of substitution:
  \begin{displaymath}
 F(k,\ell) = \left(k^\alpha + \sum_{i = 1}^d\ell_i^{\beta_i}\right)^\gamma,   
  \end{displaymath}
where   $\alpha \in(0,1)$, $\beta\in(0,1)^d$ and $\gamma \in (0,1)$.
For any $(k,w)\in \R_+\times(0,+\infty)^d$, it can be checked that  there exists a unique parameter  $\lambda(k,w)>0$ such that 
\begin{equation}
  \label{eq:lambda_CES}
  \lambda\left(k^\alpha + \sum_{j = 1}^d\left(\frac{\lambda\beta_j}{w_j}\right)^\frac{\beta_j}{1-\beta_j}\right)^{1-\gamma}= \gamma.
\end{equation}
The net output is then
\begin{displaymath}
  f(k,w) = \left(k^\alpha+\sum_{j=1}^d\left(\frac{\lambda(k,w)\beta_j}{w_j}\right)^\frac{\beta_j}{1-\beta_j}\right)^\gamma - \sum_{j = 1}^dw_i\left(\frac{\lambda(k,w)\beta_j}{w_j}\right)^\frac{1}{1-\beta_j}-\delta k.
\end{displaymath}
It can be checked that the first order partial derivatives of $f$ with respect to $k$ and $w_i$ are  
\begin{equation}
  \label{eq:df/dk_constant_elasticity}
  \frac{\partial f}{\partial k}(k,w) = \alpha \lambda(k,w)k^{\alpha - 1}- \delta, 
\end{equation}
and
\begin{equation}
		\label{eq:Dwf_CES}
		\frac{\partial f}{\partial w_i}(k,w) = - \left(\frac{\lambda(k,w)\beta_i}{w_i}\right)^\frac{1}{1-\beta_i}<0.
\end{equation}
Assumption \ref{ass:secHJ:3} is satisfied. In particular, $\lim_{k\to +\infty }  \frac{\partial f}{\partial k}(k,w) =-\delta$.
The capital $\kappa^*(w)$ in  \eqref{eq:1} is the unique solution of
\begin{equation}
  \label{eq:target_capital_CES}
  \alpha\lambda(\kappa^*(w),w) (\kappa^*(w))^{\alpha-1} = \delta+\rho.
\end{equation}
\end{enumerate}

\section{The optimal control problem of a single firm}
\label{sec:optim-contr-probl-1}
In this section, we assume that $w$,  the prices of the production factors, is a fixed vector in $(0,+\infty)^d$. Thus, in order to alleviate the notation,
we everywhere omit the dependency upon $w$; for example we write $H(k,q)$ and $u(k)$ instead of $H(k,q,w)$ and $u(k,w)$. Similarly, we set 
$u'(k)= \frac {\partial u}{\partial k} (k,w)$ and $f'(k)= \frac {\partial f}{\partial k} (k,w)$.

\bigskip

The proof of Theorem \ref{th:secHJ:main} is simpler when  $\delta=0$
because $f$ is positive on $(0,+\infty)$. We will first focus on the latter case, then we will address the other case, i.e. $\delta >0$.

\subsection{The particular case where  $\delta=0$ }

\subsubsection{Some properties of the Hamiltonian}
\label{sec:secHJ:propertiesH}

\begin{lemma}\label{lem:secHJ:Hconvex}
  Under Assumption  \ref{ass:secHJ:1},  for any  $k>0$, the function $q \mapsto H\left(k,q\right)$, defined on $(0,+\infty)$,
  is strictly convex and of class $C^2$.
\end{lemma}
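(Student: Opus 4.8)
The plan is to recall the explicit formula \eqref{eq:chap:MFG_model:secGeneralities:32} for the Hamiltonian. Since $\delta=0$ plays no role in this particular lemma (it only concerns the $q$-dependence), we write
\[
H(k,q) = L(q) + f(k,w)\,q, \qquad L(q) := \sup_{c\in\R_+}\{U(c) - cq\},
\]
so that $q\mapsto H(k,q)$ differs from $L$ only by the linear term $f(k,w)q$. Hence it suffices to prove that $L$ is strictly convex and $C^2$ on $(0,+\infty)$: adding a linear function preserves both properties (and, since $f(k,w)$ is finite for $k>0$ by Assumption \ref{ass:secHJ:3}, the sum is well defined and real-valued).

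First I would identify $L$ as (a reflection of) the Legendre transform of $-U$. Under Assumption \ref{ass:secHJ:1}, $U$ is $C^2$, increasing and strictly concave on $(0,+\infty)$, with $U'(0^+)=+\infty$ and $U'(+\infty)=0$. Therefore $U':(0,+\infty)\to(0,+\infty)$ is a $C^1$ strictly decreasing bijection; let $I=(U')^{-1}:(0,+\infty)\to(0,+\infty)$ be its inverse, which is $C^1$ and strictly decreasing. For fixed $q>0$ the map $c\mapsto U(c)-cq$ is strictly concave on $(0,+\infty)$, tends to $-\infty$ as $c\to+\infty$ (because $U'\to 0$, so $U$ grows sublinearly), and has derivative $U'(c)-q$ which is $+\infty$ at $0^+$ and negative for large $c$; hence the supremum is attained at the unique point $c^*(q)=I(q)$ solving $U'(c)=q$. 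Consequently
\[
L(q) = U\bigl(I(q)\bigr) - q\,I(q),
\]
and I would note this is the argument showing the sup in \eqref{eq:chap:MFG_model:sec:general:hamiltonian} is achieved, which the theorem statement will need anyway.

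Next I would differentiate. Since $I$ is $C^1$ and $U$ is $C^2$, $L$ is $C^1$ with $L'(q) = U'(I(q))I'(q) - I(q) - qI'(q) = -I(q)$, using $U'(I(q))=q$. Thus $L'=-I$ is itself $C^1$ (as $I$ is $C^1$), so $L$ is $C^2$, and $L''(q) = -I'(q) > 0$ because $I$ is strictly decreasing; equivalently $I'(q) = 1/U''(I(q))$ and $U''<0$ by strict concavity. Strict positivity of $L''$ on $(0,+\infty)$ gives strict convexity of $L$, hence of $q\mapsto H(k,q) = L(q)+f(k,w)q$, and $C^2$ regularity is inherited. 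I expect the only mildly delicate point to be the rigorous justification that the supremum defining $L(q)$ is attained (and attained in the interior $(0,+\infty)$, not at $c=0$), which uses the Inada-type conditions iii) in Assumption \ref{ass:secHJ:1}; everything after that is a routine differentiation of an inverse function. One should also remark that $L$ is real-valued on $(0,+\infty)$ — it is bounded above by $U(I(q))$ and below by, say, $U(c_0)-c_0 q$ for any fixed $c_0>0$ — so no $\pm\infty$ issues arise.
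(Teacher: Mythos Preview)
Your proof is correct and follows essentially the same route as the paper's: both invert $U'$ to obtain the optimizer $c^*(q)=I(q)$, compute $H_q(k,q)=-c^*(q)+f(k)$, and conclude $H_{qq}(k,q)=-1/U''(c^*(q))>0$. Your additional remarks on why the supremum is attained in the interior and why $L$ is real-valued are just extra care around the same argument.
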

\begin{proof}
From Assumption~\ref{ass:secHJ:1}, the function $U'$ is one to one on $(0,+\infty)$. 
Let $c^*$ denote the inverse function, which is decreasing  and $C^1$ on $(0,+\infty)$; its derivative is $q\mapsto 1/ U''(c^* (q))$.
For any $q>0$, $c^*(q)>0$ is the  unique consumption which achieves the  supremum in (\ref{eq:chap:MFG_model:secGeneralities:32}),
because $U'(c^*(q)) = q$. The derivative of  $q\mapsto H(k,q)$ is 
\begin{equation}
   \label{eq:chap:MFG_model:secHJ:8}
  H_q(k,q)=  -c^*(q)+f(k).
\end{equation}
Hence,  $q\mapsto   H(k,q)$ is $C^2$  on $(0,+\infty)$ and    $H_{qq}(k,q)=  -1/{U''( c^* (q))}>0$. This implies the strict convexity of  $q\mapsto   H(k,q)$.
\end{proof}
\begin{remark}\label{sec:some-prop-hamilt}
  Note that the consumption achieving the supremum in  (\ref{eq:chap:MFG_model:secGeneralities:32}) does not depend on $k$.
\end{remark}
\begin{lemma}\label{lem:secHJ:Hmin}
  We make Assumptions \ref{ass:secHJ:1} and \ref{ass:secHJ:3} and suppose furthermore that  $\delta =0$, hence $\lim_{k\to +\infty}  f'(k)= 0$.
Then, for any  $k>0$,
  \begin{eqnarray}
    \label{eq:chap:MFG_model:secHJ:5}
   \ds  \min_{q>0}H(k,q) &=& U(f(k)),\\
     \label{eq:chap:MFG_model:secHJ:6}
\ds    \argmin_{q>0}H(k,q) &=& \ds \left\{U'(f(k))\right\}.
  \end{eqnarray}
\end{lemma}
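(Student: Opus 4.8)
\textbf{Proof plan for Lemma \ref{lem:secHJ:Hmin}.}

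The plan is to exploit the explicit splitting of the Hamiltonian in \eqref{eq:chap:MFG_model:secGeneralities:32}, namely $H(k,q)=\sup_{c\ge 0}\{U(c)-cq\}+f(k)q$, and to locate its minimum in $q$ by differentiating. First I would record, from Lemma \ref{lem:secHJ:Hconvex}, that $q\mapsto H(k,q)$ is $C^2$ and strictly convex on $(0,+\infty)$, with derivative $H_q(k,q)=-c^*(q)+f(k)$, where $c^*=(U')^{-1}$ is continuous, strictly decreasing, and maps $(0,+\infty)$ onto $(0,+\infty)$ by Assumption \ref{ass:secHJ:1}iii). Since $\delta=0$, Remark \ref{rem:1} gives $f(k)>0$ for all $k>0$, so $f(k)$ lies in the range of $c^*$; hence there is a unique $q^*>0$ with $c^*(q^*)=f(k)$, equivalently $q^*=U'(f(k))$. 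At this $q^*$ one has $H_q(k,q^*)=0$, and by strict convexity $q^*$ is the unique minimizer, which proves \eqref{eq:chap:MFG_model:secHJ:6}.

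Next I would compute the minimal value. Plugging $q^*=U'(f(k))$ into \eqref{eq:chap:MFG_model:secGeneralities:32}, the term $\sup_{c\ge0}\{U(c)-cq^*\}$ is attained at $c=c^*(q^*)=f(k)$ (since $U'(c)=q^*$ there and $U$ is concave), giving value $U(f(k))-f(k)q^*$; adding $f(k)q^*$ yields $H(k,q^*)=U(f(k))$, which is \eqref{eq:chap:MFG_model:secHJ:5}.

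I do not expect a serious obstacle here: the only point requiring care is checking that $f(k)$ genuinely belongs to the range of $c^*$ so that the critical point $q^*$ exists in $(0,+\infty)$ — this is exactly where the hypothesis $\delta=0$ (hence $f(k)>0$ on $(0,+\infty)$, via Remark \ref{rem:1}) and the boundary behavior $\lim_{c\to0^+}U'(c)=+\infty$, $\lim_{c\to+\infty}U'(c)=0$ in Assumption \ref{ass:secHJ:1}iii) are used. Everything else is immediate from strict convexity and the formula for $H_q$ already established in Lemma \ref{lem:secHJ:Hconvex}.
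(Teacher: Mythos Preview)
Your proposal is correct and follows essentially the same approach as the paper's proof: both use $H_q(k,q)=-c^*(q)+f(k)$ from Lemma~\ref{lem:secHJ:Hconvex}, invoke $f(k)>0$ via Remark~\ref{rem:1} to locate the unique critical point $q^*=U'(f(k))$, and then evaluate $H$ there to get $U(f(k))$. Your write-up is a bit more explicit about why $f(k)$ lies in the range of $c^*$, but the argument is the same.
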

\begin{proof}
For $k>0$, $f(k)>0$ by Remark \ref{rem:1}. From (\ref{eq:chap:MFG_model:secHJ:8}), $H_q(k,q)=0$ if and only if $c^*(q)=f(k)$, i.e.  $q=U'(f(k))$.
This proves that the infimum of the strictly convex function $q\mapsto H(k,q)$ is a minimum,  which is achieved by  $q=U'(f(k))$.  
The minimal value is  $U(c^* (U'(f(k))))=U(f(k))$.
\end{proof}
\begin{remark}\label{rem:2-2}
 From Assumption \ref{ass:secHJ:1}, we see that if $f(0)=0$, then \\ $\lim_{k\to 0}  U'(f(k)) = +\infty$.
  On the  contrary, from Remark \ref{rem:1}, if $f(0)>0$, then $U'\circ f$ remains bounded on bounded subsets of $[0,+\infty)$.
\end{remark}
\begin{lemma}\label{sec:some-prop-hamilt-1}
  Under Assumption \ref{ass:secHJ:1},
  \begin{eqnarray}
      \label{eq:chap:MFG_model:secHJ:23}
  \ds  \lim_{q\to 0+} H(k,q)&=& \ds \lim_{c\to +\infty} U(c)-cU'(c) =  \lim_{c\to +\infty} U(c) \in (-\infty,+\infty],\\
    \label{eq:chap:MFG_model:secHJ:24}
  \ds  \lim_{q\to 0+} H_q(k,q)&=&-\infty.
  \end{eqnarray}
  \end{lemma}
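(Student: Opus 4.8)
\textbf{Proof plan for Lemma \ref{sec:some-prop-hamilt-1}.}
The plan is to exploit the decomposition \eqref{eq:chap:MFG_model:secGeneralities:32}, namely $H(k,q) = \Psi(q) + f(k) q$ with $\Psi(q) = \sup_{c\in\R_+}\{U(c) - cq\}$, so that everything reduces to understanding the behaviour of the Legendre-type transform $\Psi$ and its derivative near $q = 0^+$. Since $f(k) q \to 0$ as $q\to 0^+$ for fixed $k$, the limit in \eqref{eq:chap:MFG_model:secHJ:23} will be $\lim_{q\to 0^+}\Psi(q)$, and \eqref{eq:chap:MFG_model:secHJ:24} will follow from $\Psi'(q) = -c^*(q)$ together with $c^*(q)\to +\infty$ as $q\to 0^+$.

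First I would record, as in the proof of Lemma \ref{lem:secHJ:Hconvex}, that for $q>0$ the supremum defining $\Psi(q)$ is attained at $c = c^*(q)$, the inverse of $U'$, so that $\Psi(q) = U(c^*(q)) - c^*(q)\,q = U(c^*(q)) - c^*(q)\,U'(c^*(q))$. Then from Assumption \ref{ass:secHJ:1}.iii), $\lim_{c\to 0^+}U'(c) = +\infty$ forces $c^*(q)\to +\infty$ as $q\to 0^+$; this is the content of \eqref{eq:chap:MFG_model:secHJ:24} once we note $H_q(k,q) = \Psi'(q) + f(k) = -c^*(q) + f(k)$ from \eqref{eq:chap:MFG_model:secHJ:8}, and $f(k)$ is a fixed finite number. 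For \eqref{eq:chap:MFG_model:secHJ:23}, substituting $c = c^*(q)$ and letting $q\to 0^+$ gives $\lim_{q\to 0^+}\Psi(q) = \lim_{c\to +\infty}\bigl(U(c) - c\,U'(c)\bigr)$, which is the middle expression; adding $\lim_{q\to0^+} f(k)q = 0$ yields the left-hand expression.

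It then remains to identify $\lim_{c\to+\infty}\bigl(U(c) - cU'(c)\bigr)$ with $\lim_{c\to+\infty}U(c)$ and to check this limit lies in $(-\infty,+\infty]$. Since $U$ is increasing on $(0,+\infty)$, $\lim_{c\to+\infty}U(c)$ exists in $(U(0^+),+\infty]$, hence is not $-\infty$ (it is bounded below by, say, $U(1) > -\infty$). For the equality of the two limits I would argue that $c\,U'(c)\to 0$ as $c\to+\infty$: the simplest route is concavity, which gives $0 \le \tfrac{c}{2}U'(c) \le U(c) - U(c/2)$ for $c>0$ (the increment of a concave function over $[c/2,c]$ is at least $(c/2)$ times the right endpoint slope $U'(c)$, and at most $(c/2)U'(c/2)$ — only the lower bound on the increment is needed here, wait, we need an upper bound on $cU'(c)$). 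More carefully: by concavity $U(c) - U(c/2) \ge (c/2) U'(c) \ge 0$; if $\lim_{c\to+\infty}U(c) = +\infty$ this alone does not kill $cU'(c)$, so instead I would use that $cU'(c)$ is bounded when the limit is finite (Cauchy: $U(c)-U(c/2)\to 0$) and treat the case $\lim U = +\infty$ separately by noting that then $U(c)-cU'(c)$ and $U(c)$ can still be shown to share the same (infinite) limit because $U(c) - cU'(c) \ge U(c) - 2(U(c)-U(c/2)) = 2U(c/2) - U(c)$, which is unbounded above along a subsequence, combined with monotonicity of $q\mapsto H(k,q)-U(f(k))\ge 0$ near its behaviour — more cleanly, since $\Psi(q) = U(c^*(q)) - c^*(q)U'(c^*(q)) \ge U(c^*(q)) - c^*(q)U'(1)$ is not obviously helpful; the cleanest statement is simply that $\Psi$ is nonincreasing (as a sup of affine decreasing functions of $q$) so $\lim_{q\to0^+}\Psi(q) = \sup_{q>0}\Psi(q) = \sup_{q>0}\sup_{c>0}(U(c)-cq) = \sup_{c>0}U(c) = \lim_{c\to+\infty}U(c)$, which directly gives both the value and that it lies in $(-\infty,+\infty]$. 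I would present this last observation as the main argument, using the earlier substitution only to get the middle expression in \eqref{eq:chap:MFG_model:secHJ:23}.

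The main obstacle is purely the case $\lim_{c\to+\infty}U(c) = +\infty$ (which is allowed, e.g. $U(c)=\ln c$ or $U(c)=c^b/b$): there one must avoid sloppily cancelling $+\infty - (\text{something})$ and instead rely on the monotonicity-of-$\Psi$ argument, which handles both cases uniformly and also transparently shows the limit cannot be $-\infty$.
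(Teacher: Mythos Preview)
Your argument is correct, and for the identity $\lim_{c\to+\infty}\bigl(U(c)-cU'(c)\bigr)=\lim_{c\to+\infty}U(c)$ it is in fact cleaner than the paper's. The paper proceeds by setting $\ell_1=\lim_{c\to\infty}U(c)$ and $\ell_2=\lim_{c\to\infty}\bigl(U(c)-cU'(c)\bigr)$ (the latter limit existing because $c\mapsto U(c)-cU'(c)$ has derivative $-cU''(c)>0$), observes $\ell_2\le\ell_1$, and then rules out $\ell_2<\ell_1$ by a two-case contradiction: if $\ell_1<\infty$ one gets $cU'(c)\to\ell_1-\ell_2>0$, forcing logarithmic blow-up of $U$; if $\ell_1=+\infty$ and $\ell_2<\infty$ one integrates $cU'(c)=U(c)-\ell_2+o(1)$ via Gronwall to obtain linear growth of $U$, contradicting $U'(c)\to 0$. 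Your route bypasses all of this: since $\Psi(q)=\sup_{c>0}(U(c)-cq)$ is nonincreasing in $q$, $\lim_{q\to0^+}\Psi(q)=\sup_{q>0}\sup_{c>0}(U(c)-cq)=\sup_{c>0}\sup_{q>0}(U(c)-cq)=\sup_{c>0}U(c)=\lim_{c\to\infty}U(c)$, and combining this with $\Psi(q)=U(c^*(q))-c^*(q)U'(c^*(q))$ and $c^*(q)\to+\infty$ gives the middle expression as well. The paper's argument has the minor advantage of making explicit that $c\mapsto U(c)-cU'(c)$ is increasing (hence its limit exists independently), but your sup-exchange is shorter, handles the finite and infinite cases uniformly, and transparently shows the limit cannot be $-\infty$. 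For \eqref{eq:chap:MFG_model:secHJ:24} your argument coincides with the paper's.
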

\begin{proof}
Since $c^*$ is the inverse of $U'$ on $(0,+\infty)$, Assumption \ref{ass:secHJ:1} implies that $\lim_{q\to 0} c^*(q)=+\infty$.
 Therefore, from   \eqref{eq:chap:MFG_model:secHJ:8}, $\lim_{q\to 0} H_q (k,q)=-\infty$.

  We know that $U$ is increasing: let us set $\ell_1= \lim_{c\to +\infty} U(c)=\sup_{c\ge  0} U(c) \in (-\infty, +\infty]$.
  On the other hand, the function $c\mapsto U(c)-cU'(c)$ is increasing in $\R_+$, because its derivative is $c\mapsto -cU''(c)$; let us set $\ell_2= \lim_{c\to +\infty} U(c)-cU'(c) \in (-\infty,+\infty]$.
  
 Since $H(k,q)\sim U(c^* (q))- c^*(q) U'(c^*(q))$ as $q\to 0$, we see that  $\ds \lim_{q\to 0} H(k,q)= \ell_2$.
   
  We need to compare $\ell_1$ and $\ell_2$. It is obvious that $\ell_2\le \ell_1$. We wish to prove that  $\ell_2= \ell_1$.
  We argue  by contradiction and  assume that $\ell_2<\ell_1$. We make out two cases:
  \begin{enumerate}
  \item   $\ell_1\in \R$ and $\ell_2<\ell_1$: we see that $c U'(c)$ tends to $\ell_1-\ell_2>0 $ as $c$ tends to $+\infty$. This implies that $U(c)$ blows up like a logarithm of $c$ as $c$ tends to $+\infty$, in contradiction with the fact that $\ell_1<+\infty$. Therefore, if $\ell_1$ is finite, then $\ell_1=\ell_2$.
  \item  $\ell_1=+\infty$ and $\ell_2\in \R$. 
We see that  $c U'(c)=   U(c)-\ell_2  + o(1)$ where $\lim _{c\to \infty} o(1)=0$.
Using Gronwall lemma, we deduce that there 
exists a real number $\chi$ such that $U(c)= \chi c +\ell_2 + o(1)$. Since $U(c)\to +\infty$ as $c\to +\infty$, we see that $\chi>0$. We deduce that $\lim _{c\to \infty} U'(c)=+\infty$ in contradiction with Assumption \ref{ass:secHJ:1}.
  \end{enumerate}
  The proof is complete.
\end{proof}

Lemmas \ref{lem:secHJ:Hconvex} and \ref{lem:secHJ:Hmin} above allow us to define the increasing and decreasing parts of the Hamiltonian:
\begin{definition}\label{def:monotone_enveloppes}
  We make Assumptions \ref{ass:secHJ:1} and \ref{ass:secHJ:3} and suppose furthermore that
  $\delta=0$. 
  \begin{itemize}
  \item Define the sets 
    \begin{eqnarray*}
      \Theta^\uparrow&=&\left\{(k,q)\hbox{ such that } k>0 \hbox{ and } q\geq U'(f(k))\right\},\\
      \Theta^\downarrow&=&\left\{(k,q)\hbox{ such that } k>0 \hbox{ and } q\leq U'(f(k))\right\}.
       \end{eqnarray*}
     \item Let $H^\uparrow(\cdot,\cdot)$ be the restriction of $H(\cdot,\cdot)$ to $ \Theta^\uparrow$.  The function $q\mapsto H^\uparrow(k,q)$ is increasing in $[U'(f(k)), \infty)$.
     \item Let $H^\downarrow(\cdot,\cdot)$ be the restriction of $H(\cdot,\cdot)$ to $ \Theta^\downarrow$.  The function $q\mapsto H^\downarrow(k,q)$ is decreasing on $(0,U'(f(k))]$.
  \end{itemize}
The graphs of $H(k,\cdot)$, $H^\uparrow (k,\cdot)$ and $H^\downarrow (k,\cdot)$ are displayed on Figure \ref{fig:1}.
\end{definition}

\begin{center}
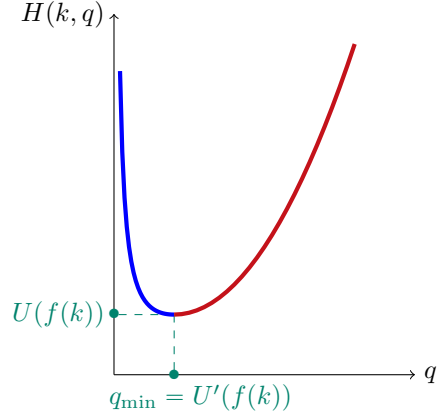

  \begin{tikzpicture}[scale=0.8]  
    \draw[->] (0,0) -- (5,0) node[right] {$q$};
    \draw[->] (0,0) -- (0,6)    ;
    \draw[color=PineGreen, dashed] (1,1) -- (1,0)    ;
     \draw[color=PineGreen, dashed] (1,1) -- (0,1)    ;
    \draw[domain=0.1:1, ultra thick, variable=\x, color=blue] plot ({\x}, {1+  0.5*(\x+1/\x-2)});
 
\draw[domain=1:4, ultra thick, variable=\x, color=Red] plot ({\x}, {1+0.5*(\x-1)*(\x-1)});
  
    \draw(0,6)      node[left] {$ H(k,q)$};
    \draw[color=PineGreen] (1,0) node{$\bullet$};
    \draw(1,0)   node[below, color=PineGreen] {$\quad \quad  q_{\min}= U'(f(k))$};
    \draw[color=PineGreen] (0,1) node{$\bullet$};
    \draw(0,1)   node[left, color=PineGreen] {$ U(f(k))$};
  \end{tikzpicture}
  \captionof{figure}{\label{fig:1} The bold line (blue and red) is the graph of the function $H(k,\cdot)$. The blue  line is the graph of $H^{\downarrow}(k,\cdot)$.    The  red line is the graph of $H^{\uparrow}(k,\cdot)$. In the present figure,  $\lim_{q\to 0_+} H(k,q)=+\infty$, but it is also possible that  $\lim_{q\to 0_+} H(k,q)\in \R$.}
\end{center}

\begin{lemma}\label{sec:secHJ:H_C1_reg}
Under the same assumptions as in Lemma~\ref{lem:secHJ:Hmin},
 $H^\downarrow(\cdot,\cdot)$  (respectively $H^\uparrow(\cdot,\cdot)$)   is of class $C^1$ on $\Theta^\downarrow$ (respectively  $\Theta^\uparrow$).
\end{lemma}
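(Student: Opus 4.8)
The plan is to show that $H^{\downarrow}$ and $H^{\uparrow}$ extend to $C^1$ functions near their defining boundary curves, and are manifestly $C^1$ in the interior.

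First I would recall the explicit formula from Lemma~\ref{lem:secHJ:Hconvex}: for $q>0$, $H(k,q)=U(c^*(q))-c^*(q)q+f(k)q$, where $c^*=(U')^{-1}$ is $C^1$ on $(0,+\infty)$. Hence $H$ is jointly $C^1$ on the open set $\{(k,q): k>0,\ q>0\}$, with $H_q(k,q)=f(k)-c^*(q)$ and $H_k(k,q)=f'(k)q$; note that Assumption~\ref{ass:secHJ:3} gives $f\in C^1(0,+\infty)$ (here $\delta=0$, so on all of $(0,+\infty)$ since $f\in C^1((0,+\infty)^{d+1})$ after fixing $w$, and in fact $f(\cdot,w)$ is locally $C^{1,1}$). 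So on the interiors of $\Theta^{\downarrow}$ and $\Theta^{\uparrow}$ — i.e. where $0<q<U'(f(k))$ resp. $q>U'(f(k))$ — there is nothing to prove: $H$ itself is $C^1$ there and $H^{\downarrow},H^{\uparrow}$ are just its restrictions.

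The only thing to check is $C^1$ regularity up to and across the common boundary curve $\Gamma=\{(k,q): q=U'(f(k)),\ k>0\}$. Since $f(\cdot)$ is $C^1$ and positive on $(0,+\infty)$ (Remark~\ref{rem:1}) and $U'$ is $C^1$ on $(0,+\infty)$, the map $k\mapsto U'(f(k))$ is $C^1$, so $\Gamma$ is a $C^1$ curve and a neighborhood of each of its points in $\overline{\Theta^{\downarrow}}$ (resp. $\overline{\Theta^{\uparrow}}$) is a $C^1$ manifold-with-boundary. Because $H$ is defined and $C^1$ on the whole open set $\{k>0,q>0\}$, which contains an open neighborhood of $\Gamma$, the restrictions $H^{\downarrow}=H|_{\Theta^{\downarrow}}$ and $H^{\uparrow}=H|_{\Theta^{\uparrow}}$ are restrictions of a function that is $C^1$ on an ambient open set; hence they are $C^1$ on the closed sets $\Theta^{\downarrow}$, $\Theta^{\uparrow}$ in the sense that they extend to $C^1$ functions on open neighborhoods (the one-sided partial derivatives along $\Gamma$ agree with the two-sided ones of $H$). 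I would make this precise by simply writing down $H_q$ and $H_k$ from the formula above and observing they are continuous on $\{k>0,q>0\}\supset\Theta^{\downarrow}\cup\Theta^{\uparrow}$.

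The only mild subtlety — and the one point I would be careful about — is the behavior near $q\to 0^+$, i.e. whether $\Theta^{\downarrow}$ really stays inside the region where $H$ is $C^1$: by Lemma~\ref{sec:some-prop-hamilt-1}, $H_q(k,q)\to-\infty$ as $q\to0^+$, so $H$ is \emph{not} $C^1$ up to $q=0$. But the statement only claims $C^1$ regularity on $\Theta^{\downarrow}=\{k>0,\ 0<q\le U'(f(k))\}$, which does not include $q=0$; every point of $\Theta^{\downarrow}$ has $q>0$, so it lies in the open set where the explicit formula shows $H\in C^1$. Thus there is no real obstacle — the ``hard part'' is only to state cleanly what ``$C^1$ on the closed region $\Theta^{\downarrow}$'' means (namely, admitting a $C^1$ extension to a neighborhood, equivalently continuity of the one-sided derivatives), and then to read off continuity of $H_q$ and $H_k$ from $H_q(k,q)=f(k)-c^*(q)$, $H_k(k,q)=f'(k)q$ together with the $C^1$-regularity of $f$, $f'$, and $c^*$ on $(0,+\infty)$.
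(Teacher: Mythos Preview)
Your proposal is correct and follows essentially the same route as the paper: both arguments reduce to the observation that $H(k,q)=U(c^*(q))-c^*(q)q+f(k)q$ is jointly $C^1$ on the open set $\{k>0,\ q>0\}$ (because $c^*$ and $f$ are $C^1$ there), so its restrictions to $\Theta^\downarrow$ and $\Theta^\uparrow$ are $C^1$. Your version is simply more explicit about the formulas for $H_q$ and $H_k$ and about what $C^1$ regularity up to the boundary curve means, whereas the paper dispatches the matter in two lines by citing the separate regularity in $q$ and in $k$.
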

\begin{proof}
  We have already seen in the proof of Lemma \ref{lem:secHJ:Hconvex} that  $q\mapsto H(k,q)$ is of class $C^2$.
  Moreover, from  Assumption \ref{ass:secHJ:3}, $k\mapsto f(k)q$ is of class $C^1$, so  $k\mapsto H(k,q)$ is also of class $C^1$. Hence $(k,q) \mapsto H^\downarrow(k,q) $ is of class $C^1 $ on  $\Theta^\downarrow$, and so is $(k,q) \mapsto H^\uparrow(k,q) $ on  $\Theta^\uparrow$.
\end{proof}
\subsubsection{General orientation}\label{sec:general-orientation}
 Heuristically, if $u$ is a classical solution of (\ref{eq:chap:MFG_model:secGeneralities:HJ}) such that $u'(k)>0$ for $k>0$ and
 $u''$ is locally bounded, 
then, taking the derivative of  (\ref{eq:chap:MFG_model:secGeneralities:HJ}), we get that for $k>0$,
\begin{displaymath}
  \left(f'(k)-\rho\right) u'(k) = -H_q\left(k,u'(k)\right)  u''(k) .
\end{displaymath}
We deduce that if the optimal investment is $0$, i.e.
 $H_q\left(k,u'(k)\right)=0$,
then 
\begin{equation}
  \label{eq:2}
f'(k)=\rho.
\end{equation}
From Assumption \ref{ass:secHJ:3}, (\ref{eq:2}) has a unique solution which we name $\kappa^*$ (note that $\kappa^*$ depends on $w$, see (\ref{eq:1}) in Theorem~\ref{th:secHJ:main}).
\\
Moreover,  $H_q\left(\kappa^*,u'(\kappa^*)\right)=0$ implies that
 $  u'(\kappa^*)= U'(f(\kappa^*))$ and $H(\kappa^*,u'(\kappa^*))=U(f(\kappa^*))$, 
see Figure \ref{fig:1}. Hence, from (\ref{eq:chap:MFG_model:secGeneralities:HJ}), we deduce that
$u(\kappa^*)=U(f(\kappa^*))/\rho$.
\\
On the other hand, because of the state constraint,  we expect that 
$H_q\left(k,u'(k)\right)$ is  positive for small values of $k$. Hence, we expect that for a classical state constrained solution $u$ of 
 (\ref{eq:chap:MFG_model:secGeneralities:HJ}),
\begin{displaymath}
H\left(k, u'(k)\right)= \left\{ \begin{array}[c]{ll}
    H^{\uparrow} \left(k, u'(k)\right), \quad \hbox{if } k<\kappa^*,\\
    H^{\downarrow} \left(k, u'(k)\right), \quad \hbox{if } k>\kappa^*.
  \end{array}\right.
\end{displaymath}
Therefore, we are going to look for $u$ as the solution of  two ordinary differential equations in $(0,\kappa^*)$ and $(\kappa^*, +\infty)$ which respectively  involve 
the inverse functions  of $q\mapsto  H^{\uparrow} (k,q)$ and $q\mapsto  H^{\downarrow}(k,q)$, with the boundary condition
\begin{displaymath}
  u(\kappa^*)=U(f(\kappa^*))/\rho.
\end{displaymath}
In order to carry out this program, we need to consider the inverse functions of
$q\mapsto H^{\uparrow}(k,q) $ and  $q\mapsto H^{\downarrow}(k,q)$:
\begin{definition}
   We make Assumptions \ref{ass:secHJ:1} and \ref{ass:secHJ:3} and suppose furthermore that
  $\delta=0$.
  \begin{itemize}
  \item Define the sets 
    \begin{eqnarray}
       \label{eq:chap:MFG_model:secHJ:12}
      \Omega^\uparrow&=&\left\{(k,v):\; k\in\left(0,\kappa^*\right] \hbox{ and } \rho v\in\left(U(f(k)),+\infty\right)\right\},\\
       \label{eq:chap:MFG_model:secHJ:13}
      \Omega^\downarrow&=&\left\{(k,v):\ k\in\left[\kappa^*,+\infty\right) \hbox{ and } \rho v\in\left(U(f(k)), \lim_{q\to 0 ^ +}  H(k,q)\right)\right\}.
       \end{eqnarray}
     \item Set
       \begin{eqnarray}
         \label{eq:chap:MFG_model:secHJ:10}
         \cF^\uparrow(k,v) &=& \left(H^\uparrow(k,\cdot)\right)^{-1}(\rho v),\quad\quad \hbox{for } (k,v) \in   \Omega^\uparrow,\\
         \label{eq:chap:MFG_model:secHJ:11}
         \cF^\downarrow(k,v) &=& \left(H^\downarrow(k,\cdot)\right)^{-1}(\rho v),\quad\quad \hbox{for } (k,v) \in   \Omega^\downarrow.
       \end{eqnarray}
       \end{itemize}
     \end{definition}

\bigskip

\paragraph{Program} Our program will be as follows: 
\begin{enumerate}
\item
Prove that the following Cauchy problem has a unique solution \\ $u^\downarrow: [\kappa^*,+\infty) \to \R$:
\begin{eqnarray}\label{eq:chap:MFG_model:secHJ:ODEdown1}
 \frac  {du^\downarrow}{dk}(k) &=& \cF^\downarrow (k,u^\downarrow(k)),\quad \quad\quad \hbox{for } k\ge  \kappa^*,\\
\label{eq:chap:MFG_model:secHJ:ODEdown2}
(k,u^\downarrow(k))&\in& \Omega^\downarrow,\quad \quad\quad\quad\quad \quad\quad \hbox{for } k> \kappa^*,\\
\label{eq:chap:MFG_model:secHJ:ODEdown3} 
  u^\downarrow(\kappa^*) &=& \frac{1}{\rho}U(f(\kappa^*)).
\end{eqnarray}
\item Prove that the following   Cauchy problem has a unique solution $u^\uparrow: (0,\kappa^*] \to \R$:
\begin{eqnarray}\label{eq:chap:MFG_model:secHJ:ODEup1}
  \frac {d u^\uparrow}{dk}(k) &=& \cF^\uparrow (k,u^\uparrow(k)),\quad \quad\quad \hbox{for } k\le  \kappa^*,\\
\label{eq:chap:MFG_model:secHJ:ODEup2}
(k,u^\uparrow(k))&\in& \Omega^\uparrow,\quad\quad\quad \quad\quad\quad\quad \hbox{for } 0<k< \kappa^*,\\
\label{eq:chap:MFG_model:secHJ:ODEup3} 
  u^\uparrow(\kappa^*) &=& \frac{1}{\rho}U(f(\kappa^*)).
\end{eqnarray}
\item Prove that the function $u$ which coincides with $  u^\uparrow$ on $[0,\kappa^*]$ and  $  u^\downarrow$ on $[\kappa^*,+\infty)$
is the  solution of (\ref{eq:chap:MFG_model:secGeneralities:HJ})-(\ref{eq:chap:MFG_model:secHJ:3}). 
\end{enumerate}

\bigskip

Before starting this program, let us state a useful lemma:
\begin{lemma}\label{sec:secHJ:F_C1_reg}
 Under the same assumptions as in Lemma~\ref{lem:secHJ:Hmin}, $\cF^\downarrow(\cdot,\cdot)$  (respectively $\cF^\uparrow(\cdot,\cdot)$)   is of class $C^1$
  on $\Omega ^\downarrow$ (respectively  $\Omega^\uparrow$).
\end{lemma}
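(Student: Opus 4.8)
The plan is to apply the implicit function theorem to the defining relations $H^\downarrow(k,\cF^\downarrow(k,v)) = \rho v$ and $H^\uparrow(k,\cF^\uparrow(k,v)) = \rho v$, using the $C^1$ regularity of $H^\downarrow$ and $H^\uparrow$ already established in Lemma~\ref{sec:secHJ:H_C1_reg} together with the strict monotonicity of $q\mapsto H^\downarrow(k,q)$ and $q\mapsto H^\uparrow(k,q)$ recorded in Definition~\ref{def:monotone_enveloppes}. Concretely, I would first treat $\cF^\downarrow$. Fix $(k_0,v_0)\in\Omega^\downarrow$ and set $q_0=\cF^\downarrow(k_0,v_0)$, so that by \eqref{eq:chap:MFG_model:secHJ:13} we have $q_0\in(0,U'(f(k_0)))$, i.e.\ $(k_0,q_0)$ lies in the \emph{interior} of $\Theta^\downarrow$ relative to the strip $k>0$. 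Define $G(k,q,v) = H^\downarrow(k,q)-\rho v$; this is $C^1$ near $(k_0,q_0,v_0)$ by Lemma~\ref{sec:secHJ:H_C1_reg}, it vanishes at that point, and $\partial_q G(k_0,q_0,v_0) = H_{qq}(k_0,q_0)\cdot(\text{sign adjustment})$ — more precisely $\partial_q G = H_q(k_0,q_0) < 0$ because $q_0 < U'(f(k_0)) = \argmin_{q>0} H(k_0,q)$ and $q\mapsto H(k_0,q)$ is strictly convex with that unique minimizer, so its derivative is strictly negative to the left of it. Hence $\partial_q G \ne 0$, and the implicit function theorem yields a $C^1$ function $(k,v)\mapsto q(k,v)$ solving $G=0$ near $(k_0,v_0)$; by uniqueness of the solution of $H^\downarrow(k,q)=\rho v$ in $q$ (strict monotonicity), this function coincides with $\cF^\downarrow$. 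Since $(k_0,v_0)$ was arbitrary in $\Omega^\downarrow$, $\cF^\downarrow\in C^1(\Omega^\downarrow)$.

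The argument for $\cF^\uparrow$ is identical in structure: fix $(k_0,v_0)\in\Omega^\uparrow$, set $q_0=\cF^\uparrow(k_0,v_0)$, and note from \eqref{eq:chap:MFG_model:secHJ:12} that $\rho v_0 \in (U(f(k_0)),+\infty)$, so $q_0 > U'(f(k_0))$, i.e.\ $q_0$ lies strictly to the right of the minimizer of $q\mapsto H(k_0,q)$; by strict convexity, $H_q(k_0,q_0)>0$, hence $\partial_q\bigl(H^\uparrow(k,q)-\rho v\bigr) = H_q(k_0,q_0)\ne 0$ at the base point. The $C^1$ regularity of $H^\uparrow$ from Lemma~\ref{sec:secHJ:H_C1_reg} again supplies the hypotheses of the implicit function theorem, and the same uniqueness remark identifies the resulting local $C^1$ solution with $\cF^\uparrow$. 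One small point to check carefully: that $\rho v_0$ indeed lies in the open range of $q\mapsto H^\uparrow(k_0,\cdot)$ so that $q_0$ is well defined and interior — this is exactly what the open condition $\rho v\in(U(f(k)),+\infty)$ encodes, using that $H^\uparrow(k_0,\cdot)$ is increasing from $U(f(k_0))$ to $+\infty$ on $[U'(f(k_0)),+\infty)$.

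I do not expect a genuine obstacle here; the lemma is a routine consequence of the implicit function theorem. The only mild subtlety is bookkeeping near the "seam" $q=U'(f(k))$, i.e.\ the boundary between $\Theta^\uparrow$ and $\Theta^\downarrow$ where $H_q=0$ and the implicit function theorem would fail — but this is precisely excluded from $\Omega^\uparrow$ and $\Omega^\downarrow$, since the open conditions $\rho v > U(f(k))$ force the relevant $q$ to be strictly on one side of the minimizer, keeping $H_q$ bounded away from zero locally. Thus the domains $\Omega^\uparrow$, $\Omega^\downarrow$ have been defined exactly so that this proof goes through, and the verification reduces to reading off the sign of $H_q$ from the strict convexity established in Lemmas~\ref{lem:secHJ:Hconvex} and~\ref{lem:secHJ:Hmin}.
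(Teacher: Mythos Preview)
Your argument via the implicit function theorem is correct and is the natural approach. Note that the paper itself omits the proof entirely, referring instead to the extended version in \cite{Petit2022phd}; your derivation is precisely the kind of routine verification one would expect there, and the bookkeeping you flag (that the open conditions $\rho v>U(f(k))$ in the definitions of $\Omega^\uparrow$ and $\Omega^\downarrow$ keep $q$ strictly away from the minimizer $U'(f(k))$, so that $H_q\neq 0$) is exactly the point that makes the implicit function theorem applicable.
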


\begin{proof}
We skip the proof for brevity and refer to \cite{Petit2022phd}, which contains an extended version of the present paper. 

\end{proof}

\subsubsection{The Cauchy problem (\ref{eq:chap:MFG_model:secHJ:ODEdown1})-(\ref{eq:chap:MFG_model:secHJ:ODEdown3})}
\label{sec:cauchy-probl-refeq:c}
Let us first consider the maximal  solution  $\phi_\lambda$ of the following Cauchy problem:
\begin{eqnarray}\label{eq:chap:MFG_model:secHJ:14}
  \phi_\lambda'(k) &=& \cF^\downarrow (k,\phi_\lambda(k)),\quad \quad\quad \hbox{for } k\ge \kappa^*,\\
\label{eq:chap:MFG_model:secHJ:15}
(k,\phi_\lambda(k))&\in& \Omega^\downarrow,\\
  \label{eq:chap:MFG_model:secHJ:16}
  \phi_\lambda(\kappa^*) &=& \lambda,
\end{eqnarray}
for $\lambda$ such that $(\kappa^*,\lambda)\in\Omega^\downarrow$, see \eqref{eq:chap:MFG_model:secHJ:13}. Cauchy-Lipschitz theorem may be applied because 
  $\cF^\downarrow$ is regular enough on $\Omega^\downarrow$.
  After having proved the existence and uniqueness of $\phi_\lambda$,
we will let  $\lambda$ tend to $U(f(\kappa^*))/\rho$ and obtain that the sequence $\phi_\lambda$ converges to a solution of (\ref{eq:chap:MFG_model:secHJ:ODEdown1})-(\ref{eq:chap:MFG_model:secHJ:ODEdown3}). One reason for not applying directly the standard existence results
 to the Cauchy problem with initial condition $\lambda= U(f(\kappa^*))/\rho$
is that $\cF^\downarrow(\cdot,\cdot)$ is not regular at the boundary of $\Omega^\downarrow$. In particular, $ v\mapsto \cF^\downarrow (\kappa^* ,v)$ is not Lipschitz continuous in the neighborhood of $(\kappa^*, U(f(\kappa^*))/\rho)$.
Moreover, the point $(\kappa^*,U(f(\kappa^*))/\rho)$ belongs to the boundary of $\Omega^\downarrow$;
this forbids the direct use of Cauchy-Peano-Arzel{\`a} theorem for obtaining the existence of a solution.

\begin{proposition}\label{prop:secHJ:vlambda}
  We make  Assumptions \ref{ass:secHJ:1} and \ref{ass:secHJ:3} and suppose furthermore that  $\delta=0$. 
  For every $\lambda$ such that $(\kappa^*,\lambda)\in\Omega^\downarrow$,   there exists a unique global solution $ \phi_\lambda$  of \eqref{eq:chap:MFG_model:secHJ:14}-\eqref{eq:chap:MFG_model:secHJ:16} in $[\kappa^*, +\infty)$. The function  $ \phi_\lambda$ is increasing and strictly concave. 
\end{proposition}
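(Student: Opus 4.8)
The plan is to run a shooting-type argument on the ODE \eqref{eq:chap:MFG_model:secHJ:14}--\eqref{eq:chap:MFG_model:secHJ:16}. First I would note that, by Lemma \ref{sec:secHJ:F_C1_reg}, the right-hand side $\cF^\downarrow$ is $C^1$ on the open set $\Omega^\downarrow$; since $(\kappa^*,\lambda)$ lies in $\Omega^\downarrow$ (which is open in the $v$-direction for each fixed $k\in(\kappa^*,+\infty)$, and we have strict inequality $\rho\lambda>U(f(\kappa^*))$ by hypothesis), the Cauchy--Lipschitz theorem gives a unique maximal solution $\phi_\lambda$ on a maximal interval $[\kappa^*,T_\lambda)$ whose graph stays in $\Omega^\downarrow$. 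The bulk of the work is then to show $T_\lambda=+\infty$, i.e. that the solution cannot escape $\Omega^\downarrow$ in finite "time" $k$, and simultaneously to establish monotonicity and concavity.

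For monotonicity: on $\Omega^\downarrow$ one has $\rho v>U(f(k))$, hence $\cF^\downarrow(k,v)=(H^\downarrow(k,\cdot))^{-1}(\rho v)$ is strictly smaller than the minimizer $U'(f(k))$ of $H(k,\cdot)$, so $H_q(k,\cF^\downarrow(k,v))<0$, i.e. $b(k)=H_q(k,\phi_\lambda(k))<0$ along the trajectory — this is the "$k>\kappa^*$" regime of Theorem \ref{th:secHJ:main}. In particular $\phi_\lambda'(k)=\cF^\downarrow(k,\phi_\lambda(k))>0$ once we check the inverse is positive (it is, since $H^\downarrow(k,\cdot)$ is decreasing from $+\infty$ or a finite value down to $U(f(k))$ on $(0,U'(f(k))]$, and $q_{\min}=U'(f(k))>0$), giving that $\phi_\lambda$ is increasing. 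For concavity I would differentiate the ODE: using $\rho\phi_\lambda=H(k,\cF^\downarrow(k,\phi_\lambda))$ and differentiating in $k$ one gets a relation of the form $\bigl(H_q(k,\phi_\lambda')\bigr)\phi_\lambda'' = \rho\phi_\lambda' - f'(k)\phi_\lambda'$ (compare the heuristic in Section \ref{sec:general-orientation}); since $H_q<0$ along the trajectory and, for $k>\kappa^*$, $f'(k)<f'(\kappa^*)=\rho$ by strict concavity of $f$, the right-hand side $(\rho-f'(k))\phi_\lambda'$ is positive, forcing $\phi_\lambda''<0$. So $\phi_\lambda$ is strictly concave wherever it is defined.

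For global existence: concavity plus $\phi_\lambda'>0$ shows $\phi_\lambda$ is trapped between its tangent line at $\kappa^*$ and the horizontal line $v=\phi_\lambda(\kappa^*)+\text{(something bounded)}$; more usefully, I would show the graph cannot hit either boundary component of $\Omega^\downarrow$. The lower boundary $\rho v=U(f(k))$: since $\phi_\lambda$ is increasing while $k\mapsto U(f(k))/\rho$ is eventually decreasing (because $f$ is eventually decreasing when $\delta=0$... actually $f$ is increasing here since $\delta=0$, but its growth is sublinear because $f'(k)\to 0$, whereas $\phi_\lambda$ grows at a controlled rate), I would get a differential inequality showing $\rho\phi_\lambda(k)-U(f(k))$ stays bounded away from $0$; concretely, at any point approaching the lower boundary $\cF^\downarrow\to U'(f(k))^-$ which is finite and positive, so the solution crosses transversally away from the boundary rather than toward it — i.e. the vector field points into $\Omega^\downarrow$ near that face. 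The upper boundary is only present when $\lim_{q\to0^+}H(k,q)<+\infty$; there $\cF^\downarrow\to 0^+$, so $\phi_\lambda'\to0$ and again one shows $\rho\phi_\lambda(k)$ stays below $\lim_{q\to0^+}H(k,q)$ by a comparison/Gronwall estimate. Finite-$k$ blow-up of $\phi_\lambda$ itself is excluded because $0<\phi_\lambda'(k)=\cF^\downarrow(k,\phi_\lambda(k))\le U'(f(k))\le U'(f(\kappa^*))$ is bounded on any bounded $k$-interval (using $f$ increasing and $U'$ decreasing), so $\phi_\lambda$ grows at most linearly. Combining these, $T_\lambda=+\infty$.

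The main obstacle I anticipate is the boundary analysis near the lower face $\{\rho v=U(f(k))\}$: one must quantify precisely that the trajectory does not asymptote into it, which requires a careful estimate on how fast $\cF^\downarrow(k,v)$ degenerates (like $\sqrt{\rho v-U(f(k))}$, by the quadratic vanishing of $H_{qq}$-type behavior of $H^\downarrow$ near its minimum) versus how fast $U(f(k))/\rho$ can rise — this is exactly the $|b(k)|=O(|k-\kappa^*|)$ phenomenon flagged after Proposition \ref{prop:FP_1}, and getting the right one-sided differential inequality for $\rho\phi_\lambda(k)-U(f(k))$ is the delicate point; everything else is Cauchy--Lipschitz plus sign-chasing.
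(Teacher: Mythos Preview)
Your approach is the paper's: Cauchy--Lipschitz on $\Omega^\downarrow$ for a maximal solution, then the sign computation $\phi_\lambda''=(\rho-f'(k))\phi_\lambda'/H_q(k,\phi_\lambda')<0$ for strict concavity, then ruling out exit through either boundary of $\Omega^\downarrow$ at finite $k$. Your monotonicity/concavity arguments and your linear growth bound $0<\phi_\lambda'\le U'(f(k))\le U'(f(\kappa^*))$ all match the paper.

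One correction: the ``main obstacle'' you flag in the final paragraph does not arise in \emph{this} proposition. Here $(\kappa^*,\lambda)$ lies strictly inside $\Omega^\downarrow$, and your own observation that on the lower face the vector field outpaces the boundary curve---equivalently, that $k\mapsto U(f(k))/\rho$ is a strict subsolution for $k>\kappa^*$ because
\[
\frac{d}{dk}\Bigl(\frac{U(f(k))}{\rho}\Bigr)=U'(f(k))\,\frac{f'(k)}{\rho}<U'(f(k))=\cF^\downarrow\!\Bigl(k,\frac{U(f(k))}{\rho}\Bigr)
\]
---is already sufficient: a standard comparison then forbids $\phi_\lambda$ from touching that face at any finite $k$ (this is exactly the paper's argument). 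No rate estimate on how $\cF^\downarrow$ degenerates near the face is needed. The $|b(k)|=O(|k-\kappa^*|)$ analysis you invoke is Lemma~\ref{lem:secFP:1}, used only for the continuity equation, and the genuinely delicate limit $\lambda\downarrow U(f(\kappa^*))/\rho$ (where one \emph{starts} on the boundary) is the content of the next result, Proposition~\ref{prop:secHJ:ex_down}, not this one. So trust your inward-pointing argument and drop the last paragraph.
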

\begin{proof}
Setting $\Theta(k)= (k, \phi_\lambda(k))$,  it is convenient to rewrite \eqref{eq:chap:MFG_model:secHJ:14}-\eqref{eq:chap:MFG_model:secHJ:16} in the equivalent form:
  find $k\mapsto \Theta(k) \in \Omega^\downarrow$ such that
  \begin{eqnarray}
    \label{eq:chap:MFG_model:secHJ:17}
    \Theta'(k)&=& \left (1,  \cF^\downarrow (\Theta(k))\right) ,\quad  k\ge \kappa^*,\\
    \label{eq:chap:MFG_model:secHJ:18}
    \Theta(\kappa^*) &=&(\kappa^*, \lambda).
  \end{eqnarray}
  We may apply Cauchy-Lipschitz theorem;  indeed, from Lemma, the map \\ $\Theta\mapsto  \left (1, \cF^\downarrow (\Theta)\right) $ is $C^1$ on $\Omega^\downarrow$. Therefore,  there exists a unique maximal solution $\Theta$ of \eqref{eq:chap:MFG_model:secHJ:17}-\eqref{eq:chap:MFG_model:secHJ:18} in $[\kappa^*, \bar k)$. We observe that for $k\in [\kappa^*,\overline{k})$, 
  $\phi_\lambda'(k)  = \cF^\downarrow(k,\phi_\lambda(k))>0$, so $\lim_{k\to \bar k^-} \phi_\lambda(k)$ exists.
  Moreover, by taking the derivative, 
  \begin{displaymath}
  \phi_\lambda''(k) =\frac {\rho-f'(k) }    { H_q\left(k,\phi_\lambda'(k) \right)} \phi_\lambda'(k)<0.      \end{displaymath}
Therefore $\phi_\lambda$ is strictly concave in $[\kappa^*, \bar k)$.

\medskip

If $\bar k<\infty$, then from Cauchy-Lipschitz theorem,
$\rho \lim_{k\to \bar k^-} \phi_\lambda(k)$ must be equal either to  $ U(f(\bar k))$ or to $\lim_{q\to 0} H(k,q)= \lim_{c\to +\infty} U(c)$
(which does not depend on $k$). Let us show by contradiction that both cases are impossible.
\begin{enumerate}
\item Assume first that $\rho \lim_{k\to \bar k^-} \phi_\lambda(k)=\lim_{q\to 0} H(k,q)= \lim_{c\to +\infty} U(c)$;
  let us make out two subcases:
  \begin{enumerate}
  \item If $ \lim_{c\to +\infty} U(c)=+\infty$,   then   $\lim_{k\to \bar k^-}   \phi_\lambda(k)   =+\infty$, which yields 
that $\lim_{k\to \bar k^-}  \cF^\downarrow(k,\phi_\lambda(k)) =0$. From (\ref{eq:chap:MFG_model:secHJ:17}), we see 
 that $\lim_{k\to \bar k^-} \phi_\lambda'(k) =0$, in contradiction with $\lim_{k\to  \bar k^-}  \phi_\lambda(k))   =+\infty$.
\item  If $ \lim_{c\to +\infty} U(c)= \ell\in \R$, then it is possible to extend continuously $\phi_\lambda$ to $\bar k$ by setting $\phi_\lambda(\bar k)= \ell/\rho$.
Since $H(k,0)=\ell$ for all $k$, we see that 
\begin{equation}
  \label{eq:3}
\cF^{\downarrow}(k,\ell/\rho)=0,\quad \hbox{ for all }k\ge \kappa^*. 
\end{equation}
 On the other hand, since $U'(c^* (q))=q$, Assumption \ref{ass:secHJ:1} implies that $\lim_{q\to 0} c^* (q)=+\infty$. This implies that
\begin{equation}
  \label{eq:4}
\frac {\partial \cF^\downarrow}{\partial v}(k,\ell/\rho)=0,\quad \hbox{ for all }k\ge \kappa^*. 
\end{equation}
  But (\ref{eq:3}) and (\ref{eq:4})   prevent the state $\ell/\rho$ to be reached in finite time by a solution of \eqref{eq:chap:MFG_model:secHJ:14}-\eqref{eq:chap:MFG_model:secHJ:15}; we have obtained the desired contradiction.
   \end{enumerate}
 \item Assume that  $\lim_{k\to \bar k^-} \phi_\lambda(k)=   U(f(\bar k))/ \rho$.    
It is then possible to extend continuously $\phi_\lambda$ to $\bar k$ by setting $\phi_\lambda(\bar k)=  U(f(\bar k))/ \rho$,
   and (\ref{eq:chap:MFG_model:secHJ:14}) holds in $[\kappa^*, \bar k]$.  On the other hand,
   \begin{equation}
     \label{eq:5}
\frac d {dk} \left( \frac {U(f(k))} \rho \right)- \cF^\downarrow \left(k, \frac {U(f(k))} \rho \right) = U'(f(k)) \frac{f'(k)-\rho}{\rho} <0,\quad \hbox{ for } k> \kappa^*,
   \end{equation}
 from the definition of $\kappa^*$ and Assumption \ref{ass:secHJ:3}. Thus, $k\mapsto U(f(k))/\rho$ is a subsolution of the ordinary differential equation
 satisfied by $\phi_\lambda$, which yields that $U(f(k))/\rho > \phi_\lambda(k)$ for $k< \bar k$. This is impossible, since 
$(k,  \phi_\lambda(k))\in \Omega^\downarrow$ for $k<\bar k$.
 \end{enumerate}
 We have proved that $\bar k=+\infty$. The unique maximal solution of  \eqref{eq:chap:MFG_model:secHJ:17}-\eqref{eq:chap:MFG_model:secHJ:18} is a global solution.
\end{proof}

Letting $\lambda$ tend to $U(f(\kappa^*))/\rho$, we shall prove the following result:
\begin{proposition}\label{prop:secHJ:ex_down}
  Under the same assumptions as in Proposition~\ref{lem:secHJ:philambda},
the  \\ Cauchy problem  (\ref{eq:chap:MFG_model:secHJ:ODEdown1})-(\ref{eq:chap:MFG_model:secHJ:ODEdown3}) has a unique solution 
 $u^\downarrow  \in C^1( [\kappa^*,+\infty))\cap C^2(\kappa^*,+\infty )$. Moreover $u^\downarrow$ is  strictly concave on $(\kappa^*, +\infty)$. 
\end{proposition}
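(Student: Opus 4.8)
The plan is to construct $u^\downarrow$ as the monotone limit of the functions $\phi_\lambda$ produced by Proposition~\ref{prop:secHJ:vlambda} as $\lambda$ decreases to $U(f(\kappa^*))/\rho$. Since $\cF^\downarrow$ is $C^1$ on the open set $\Omega^\downarrow$ (Lemma~\ref{sec:secHJ:F_C1_reg}), two solutions of \eqref{eq:chap:MFG_model:secHJ:14} that stay in $\Omega^\downarrow$ cannot cross, so $\lambda\mapsto\phi_\lambda(k)$ is strictly increasing for each fixed $k\ge\kappa^*$. On the other hand, the proof of Proposition~\ref{prop:secHJ:vlambda} already shows $\rho\phi_\lambda(k)>U(f(k))$ for all $k$, and fixing some $\lambda_0$ with $(\kappa^*,\lambda_0)\in\Omega^\downarrow$ one has $\phi_\lambda\le\phi_{\lambda_0}$ whenever $\lambda\le\lambda_0$. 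Hence, for every $k\ge\kappa^*$ the limit $u^\downarrow(k):=\lim_{\lambda\downarrow U(f(\kappa^*))/\rho}\phi_\lambda(k)$ exists and $U(f(k))/\rho\le u^\downarrow(k)\le\phi_{\lambda_0}(k)$. Sandwiching $u^\downarrow$ between $k\mapsto U(f(k))/\rho$ and the continuous functions $\phi_\lambda$ (with $\phi_\lambda(\kappa^*)=\lambda$) gives $\lim_{k\downarrow\kappa^*}u^\downarrow(k)=U(f(\kappa^*))/\rho=u^\downarrow(\kappa^*)$, so $u^\downarrow$ is continuous on $[\kappa^*,+\infty)$ (and concave, as a pointwise limit of the strictly concave functions $\phi_\lambda$).

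Next I would pass to the limit in the integral form $\phi_\lambda(k)=\lambda+\int_{\kappa^*}^k\cF^\downarrow(s,\phi_\lambda(s))\,ds$. The map $v\mapsto\cF^\downarrow(k,v)=\bigl(H^\downarrow(k,\cdot)\bigr)^{-1}(\rho v)$ is decreasing (it inverts the decreasing branch of $H(k,\cdot)$) and extends continuously to the lower boundary $\{\rho v=U(f(k))\}$ of $\Omega^\downarrow$ with value $U'(f(k))$; consequently, along a decreasing sequence $\lambda_n\downarrow U(f(\kappa^*))/\rho$ the integrands $\cF^\downarrow(s,\phi_{\lambda_n}(s))$ increase and converge pointwise to $\cF^\downarrow(s,u^\downarrow(s))$, while staying below the continuous function $s\mapsto U'(f(s))$. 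Monotone convergence then yields $u^\downarrow(k)=U(f(\kappa^*))/\rho+\int_{\kappa^*}^k\cF^\downarrow(s,u^\downarrow(s))\,ds$ with continuous integrand, so $u^\downarrow\in C^1([\kappa^*,+\infty))$, it solves \eqref{eq:chap:MFG_model:secHJ:ODEdown1}, and $\frac{du^\downarrow}{dk}(\kappa^*)=U'(f(\kappa^*))$.

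It remains to verify that $(k,u^\downarrow(k))\in\Omega^\downarrow$ for $k>\kappa^*$. The upper bound $\rho u^\downarrow(k)<\lim_{q\to0^+}H(k,q)$ is inherited from $u^\downarrow\le\phi_{\lambda_0}$ and $(k,\phi_{\lambda_0}(k))\in\Omega^\downarrow$. For the strict lower bound, set $\psi(k)=u^\downarrow(k)-U(f(k))/\rho\ge0$, so $\psi(\kappa^*)=0$ and $\psi$ is $C^1$ on $(\kappa^*,+\infty)$. If $\psi(k_0)=0$ at some $k_0>\kappa^*$, then $k_0$ is a global minimum of $\psi$, whence $\psi'(k_0)=0$; but by \eqref{eq:5},
\[
\psi'(k_0)=\cF^\downarrow\!\Bigl(k_0,\tfrac{U(f(k_0))}{\rho}\Bigr)-\frac{d}{dk}\Bigl(\tfrac{U(f(k))}{\rho}\Bigr)\Big|_{k=k_0}=U'(f(k_0))\,\frac{\rho-f'(k_0)}{\rho}>0,
\]
since $f'(k_0)<f'(\kappa^*)=\rho$ by strict concavity of $f$ --- a contradiction. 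Thus $\psi>0$ on $(\kappa^*,+\infty)$, i.e.\ $(k,u^\downarrow(k))\in\Omega^\downarrow$ there, and bootstrapping in \eqref{eq:chap:MFG_model:secHJ:ODEdown1} with $\cF^\downarrow\in C^1(\Omega^\downarrow)$ gives $u^\downarrow\in C^2(\kappa^*,+\infty)$.

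Finally, on $(\kappa^*,+\infty)$ one has $\frac{du^\downarrow}{dk}(k)=\cF^\downarrow(k,u^\downarrow(k))\in(0,U'(f(k)))$, so the identity $\rho u^\downarrow(k)=H\bigl(k,\tfrac{du^\downarrow}{dk}(k)\bigr)$ holds (there $H^\downarrow=H$); differentiating it and using $H_k(k,q)=f'(k)q$ gives $\frac{d^2u^\downarrow}{dk^2}(k)=\bigl(\rho-f'(k)\bigr)\frac{du^\downarrow}{dk}(k)\big/H_q\bigl(k,\tfrac{du^\downarrow}{dk}(k)\bigr)$, which is negative because $\rho-f'(k)>0$ and $H_q<0$ on the decreasing branch; hence $u^\downarrow$ is strictly concave on $(\kappa^*,+\infty)$. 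For uniqueness, if $u_1,u_2$ both solve \eqref{eq:chap:MFG_model:secHJ:ODEdown1}--\eqref{eq:chap:MFG_model:secHJ:ODEdown3}, then $\psi=u_1-u_2$ satisfies $\psi(\kappa^*)=0$ and $\psi'=\cF^\downarrow(\cdot,u_1)-\cF^\downarrow(\cdot,u_2)$, which has sign opposite to $\psi$ because $\cF^\downarrow(k,\cdot)$ is decreasing; therefore $(\psi^2)'\le0$, forcing $\psi\equiv0$. The delicate point throughout is that $\cF^\downarrow$ fails to be Lipschitz (indeed $\partial_v\cF^\downarrow$ blows up) at the corner $(\kappa^*,U(f(\kappa^*))/\rho)$, which is why the standard Cauchy--Lipschitz and Peano arguments cannot be invoked directly and one must lean instead on the monotonicity of $v\mapsto\cF^\downarrow(k,v)$ together with the sign of the defect \eqref{eq:5}.
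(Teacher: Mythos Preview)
Your proof is correct and follows essentially the same strategy as the paper: build $u^\downarrow$ as the decreasing limit of the $\phi_\lambda$, pass to the limit in the integral form of the ODE, use \eqref{eq:5} to keep the limit strictly inside $\Omega^\downarrow$, and exploit the monotonicity of $v\mapsto\cF^\downarrow(k,v)$ for uniqueness. The only cosmetic differences are that the paper invokes uniform convergence of concave functions (rather than monotone convergence) to pass to the limit in the integral, and phrases uniqueness via an integrated contradiction rather than your $(\psi^2)'\le 0$ argument.
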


\begin{proof}
  Consider a decreasing sequence $(\lambda_n)_{n\in\N}$, such for all $n\in \N$,
  $(\kappa^*,\lambda_n)\in\Omega^\downarrow$ and $\lim_{n\to \infty} \lambda_n=  U(f(\kappa^*))/\rho$.
  A direct consequence of Cauchy-Lipschitz theorem is that
  $\phi_{\lambda_n} (k)> \phi_{\lambda_{n+1}}(k)$ for all $k\ge \kappa^*$.
  On the other hand, we know that $\phi_{\lambda_n} (k)\ge U(f(k))/\rho$. This implies that there exists a function $\phi: [\kappa^*, +\infty)\to \R$ such that  $\phi_{\lambda_n}$ converge to
  $\phi$ pointwise as $n$ tends to $+\infty$.\\
 Since $(\phi_{\lambda_n})_{n\in\N}$ is a sequence of concave functions locally uniformly bounded,  we see from \cite[Theorem 3.3.3]{MR2041617} that the convergence is uniform on every compact set, so the limit $\phi$ is continuous.\\
  Since $\cF^\downarrow(\cdot,\cdot)$ is continuous on the closure of $\Omega^\downarrow$, 
 we may pass  to the  limit in the integral form of \eqref{eq:chap:MFG_model:secHJ:14}: for all $k\ge \kappa^*$,
  \begin{displaymath}
\phi(k) = \frac{1}{\rho}U(f(\kappa^*))+\int_{\kappa^*}^k\cF^\downarrow(\kappa,\phi(\kappa))d\kappa.    
  \end{displaymath}
This implies that $\phi\in C^1([\kappa^*, +\infty))$ and that $\phi$ satisfies \eqref{eq:chap:MFG_model:secHJ:ODEdown1} and  \eqref{eq:chap:MFG_model:secHJ:ODEdown3}. Hence $\phi$ is an increasing function.\\
On the other hand, \eqref{eq:5} implies that  $\phi(k)> U(f(k))/\rho$ for all $k>\kappa^*$. This shows that $\phi$ satisfies \eqref{eq:chap:MFG_model:secHJ:ODEdown2}.\\
Arguing as in the proof of Proposition \ref{prop:secHJ:vlambda}, we see that $\phi$ is $C^2$ on $(\kappa^*, +\infty)$ and strictly concave. 
We have proved  the existence of a solution of \eqref{eq:chap:MFG_model:secHJ:ODEdown1}-\eqref{eq:chap:MFG_model:secHJ:ODEdown3}.

Assume that there are two such solutions $\phi_1$ and $\phi_2$.
If there exists $k_0>\kappa^*$ such that $\phi_1(k_0)=\phi_2(k_0)$, then $\phi_1$ and $\phi_2$ coincide  from
Cauchy-Lipschitz theorem. Hence we may assume that  $\phi_1(k)<\phi_2(k)$ for $k>\kappa^*$.
Then, using the non increasing character of  $\cF^\downarrow(k,\cdot)$, we see that, for every $k>\kappa^*$,
\begin{displaymath}
0>\phi_1(k)-\phi_2(k) = \int_{\kappa^*}^k\cF^\downarrow(\kappa,\phi_1(\kappa))-\cF^\downarrow(\kappa,\phi_2(\kappa))d\kappa\geq 0.
\end{displaymath}
We have found a contradiction and achieved the proof of uniqueness.
\end{proof}

\subsubsection{The Cauchy problem \eqref{eq:chap:MFG_model:secHJ:ODEup1}-\eqref{eq:chap:MFG_model:secHJ:ODEup3}}
\label{sec:cauchy-probl-refeq:c2}
Also in this case,  $ \cF^\uparrow (k ,\cdot)$ is not Lipschitz continuous in the neighborhood of $(\kappa^*, U(f(\kappa^*))/\rho)$  
and $(\kappa^*, U(f(\kappa^*))/\rho)$ belongs to the boundary of $\Omega^\uparrow$. This prevents us
from applying directly  standard existence results to \eqref{eq:chap:MFG_model:secHJ:ODEup1}-\eqref{eq:chap:MFG_model:secHJ:ODEup3}.\\
For this reason, we start by considering the Cauchy problem:
\begin{eqnarray}\label{eq:chap:MFG_model:secHJ:19}
  \psi_{\epsilon,\lambda}'(k) &=& \cF^\uparrow (k,\psi_{\epsilon,\lambda}(k)),\quad 0<k \le \kappa^*,
\\
\label{eq:chap:MFG_model:secHJ:20}
(k,\psi_{\epsilon,\lambda}(k))&\in& \Omega^\uparrow,\\
  \label{eq:chap:MFG_model:secHJ:21}
  \psi_{\epsilon,\lambda}(\epsilon) &=& \lambda,
\end{eqnarray}
for $(\epsilon,\lambda)\in \Omega^\uparrow$, see  \eqref{eq:chap:MFG_model:secHJ:11} (thus $0<\epsilon<\kappa^*$).
As above, Cauchy-Lipschitz theorem may be applied to \eqref{eq:chap:MFG_model:secHJ:19}-\eqref{eq:chap:MFG_model:secHJ:21}. 
After having obtained the existence and uniqueness of a maximal solution $\psi_{\epsilon,\lambda}$, 
we will prove that there exists $\lambda$ such that $\psi_{\epsilon,\lambda}$ is a global solution, i.e. defined on $(0, \kappa^*]$,
and that $ \psi_{\epsilon,\lambda}(\kappa^*) = U(f(\kappa^*))/\rho$.
\begin{lemma} \label{lem:secHJ:philambda}
  We make  Assumptions \ref{ass:secHJ:1} and \ref{ass:secHJ:3} and suppose furthermore that
  $\delta=0$. 
  For every $(\epsilon,\lambda)\in\Omega^\uparrow$ with $0<\epsilon<\kappa^*$,
  there exists a unique maximal solution of the Cauchy problem \eqref{eq:chap:MFG_model:secHJ:19}-\eqref{eq:chap:MFG_model:secHJ:21}
  of the form $\Bigl( (0,\overline{k}(\epsilon,\lambda)), \psi_{\epsilon,\lambda} \Bigr)$  where $\epsilon< \overline{k}(\epsilon,\lambda)\le \kappa^*$. The function $\psi_{\epsilon,\lambda}$ is strictly concave and increasing in $(0,\overline{k}(\epsilon,\lambda))$.
\end{lemma}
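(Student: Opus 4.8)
The plan is to follow the strategy used for the downward Cauchy problem in Proposition~\ref{prop:secHJ:vlambda}, with the roles of the two endpoints interchanged: the right endpoint of the interval of existence will be clamped at $\kappa^*$ by the very definition of $\Omega^\uparrow$, and the substantive work goes into showing that the solution can be continued backwards all the way down to $k=0$.

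First I would recast \eqref{eq:chap:MFG_model:secHJ:19}--\eqref{eq:chap:MFG_model:secHJ:21} in planar form: find $k\mapsto\Theta(k)=(k,\psi_{\epsilon,\lambda}(k))\in\Omega^\uparrow$ with $\Theta'(k)=\bigl(1,\cF^\uparrow(\Theta(k))\bigr)$ and $\Theta(\epsilon)=(\epsilon,\lambda)$. By Lemma~\ref{sec:secHJ:F_C1_reg} the vector field $\Theta\mapsto\bigl(1,\cF^\uparrow(\Theta)\bigr)$ is $C^1$ on the open set $\Omega^\uparrow$, so the Cauchy--Lipschitz theorem provides a unique maximal solution on an open interval $(\underline{k},\overline{k})\ni\epsilon$ along which $(k,\psi_{\epsilon,\lambda}(k))\in\Omega^\uparrow$; since $\Omega^\uparrow$ imposes $k\le\kappa^*$, this automatically gives $\epsilon<\overline{k}\le\kappa^*$. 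Monotonicity and concavity then come essentially for free. For any $(k,v)\in\Omega^\uparrow$ one has $\rho v>U(f(k))=H\bigl(k,U'(f(k))\bigr)$ by Lemma~\ref{lem:secHJ:Hmin}, hence, $H^\uparrow(k,\cdot)$ being strictly increasing, $\cF^\uparrow(k,v)>U'(f(k))>0$, so $\psi_{\epsilon,\lambda}'>0$ on $(\underline{k},\overline{k})$. Moreover $\psi_{\epsilon,\lambda}\in C^2$ because $\cF^\uparrow\in C^1$, and differentiating $\rho\psi_{\epsilon,\lambda}=H\bigl(k,\psi_{\epsilon,\lambda}'\bigr)$ together with $H_k(k,q)=f'(k)q$ yields
\[
\psi_{\epsilon,\lambda}''(k)=\frac{\bigl(\rho-f'(k)\bigr)\psi_{\epsilon,\lambda}'(k)}{H_q\bigl(k,\psi_{\epsilon,\lambda}'(k)\bigr)},
\]
which is negative on $(\underline{k},\overline{k})$: there $k<\kappa^*$, so $f'(k)>f'(\kappa^*)=\rho$ by strict concavity of $f$, while $\psi_{\epsilon,\lambda}'(k)>U'(f(k))$ gives $H_q\bigl(k,\psi_{\epsilon,\lambda}'(k)\bigr)=f(k)-c^*\bigl(\psi_{\epsilon,\lambda}'(k)\bigr)>0$.

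The main point, and the only delicate one, is to prove $\underline{k}=0$; this is the analogue, for the left endpoint and the lower obstacle $U(f(\cdot))/\rho$, of the non-explosion argument carried out in Proposition~\ref{prop:secHJ:vlambda}. I would argue by contradiction: assume $\underline{k}>0$. Since $\psi_{\epsilon,\lambda}$ is increasing and, $f$ being strictly increasing and $U(f(\underline{k}))$ finite, bounded below by $U(f(\underline{k}))/\rho$, the limit $\ell:=\lim_{k\to\underline{k}^+}\psi_{\epsilon,\lambda}(k)$ exists and is finite; by maximality $(\underline{k},\ell)\in\partial\Omega^\uparrow$, and as $\underline{k}\in(0,\kappa^*)$ with $\ell$ finite the only possibility is $\rho\ell=U(f(\underline{k}))$. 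To rule this out, consider the barrier $z(k)=U(f(k))/\rho$, which is $C^1$ on $(0,\kappa^*]$ and, by Lemma~\ref{lem:secHJ:Hmin}, satisfies $\cF^\uparrow(k,z(k))=\bigl(H^\uparrow(k,\cdot)\bigr)^{-1}\bigl(U(f(k))\bigr)=U'(f(k))$, whence
\[
z'(k)-\cF^\uparrow\bigl(k,z(k)\bigr)=\frac{U'(f(k))}{\rho}\bigl(f'(k)-\rho\bigr)>0\qquad\text{for }0<k<\kappa^*,
\]
i.e. $z$ is a strict supersolution. Set $d(k)=\psi_{\epsilon,\lambda}(k)-z(k)$, which is $>0$ on $(\underline{k},\overline{k})$ with $d(\underline{k}^+)=0$. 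Using that $\cF^\uparrow$ extends continuously to the lower boundary of $\Omega^\uparrow$ (being, for each $k$, the inverse of the continuous increasing bijection $H^\uparrow(k,\cdot)$, exactly as for $\cF^\downarrow$ in the proof of Proposition~\ref{prop:secHJ:ex_down}), one gets $d'(k)=\cF^\uparrow(k,\psi_{\epsilon,\lambda}(k))-z'(k)\to U'(f(\underline{k}))\bigl(1-f'(\underline{k})/\rho\bigr)<0$ as $k\to\underline{k}^+$, since $f'(\underline{k})>\rho$. Hence $d$ is strictly decreasing on some interval $(\underline{k},\underline{k}+\eta)$, contradicting $d>0$ and $d(\underline{k}^+)=0$. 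Therefore $\underline{k}=0$, and collecting the above yields the unique maximal solution on $(0,\overline{k}(\epsilon,\lambda))$ with $\epsilon<\overline{k}(\epsilon,\lambda)\le\kappa^*$, increasing and strictly concave, as claimed.
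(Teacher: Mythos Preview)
Your proof is correct and follows essentially the same approach as the paper: Cauchy--Lipschitz for local existence, the computation $\psi''=(\rho-f')\psi'/H_q<0$ for concavity, and the comparison with the barrier $z(k)=U(f(k))/\rho$ (a strict supersolution of the $\cF^\uparrow$ equation on $(0,\kappa^*)$) to rule out $\underline{k}>0$. The only minor difference is that the paper also lists and dismisses the case $\lim_{k\to\underline{k}^+}\psi_{\epsilon,\lambda}(k)=-\infty$, which you bypass more cleanly by observing that monotonicity together with the finite lower bound $U(f(\underline{k}))/\rho$ forces the limit to exist in $\R$.
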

\begin{proof}
  Existence and uniqueness of a maximal solution follow from the Cauchy-Lipschitz theorem.
  The strict monotonicity and concavity of  $\psi_{\epsilon,\lambda}$ are obtained as in Proposition \ref{prop:secHJ:vlambda}.
  Assume by contradiction that the interval in the definition of  the maximal solution is 
not of the form $(0,\overline{k}(\epsilon,\lambda))$. This implies that there exists $\underline{k}\in (0,\epsilon)$ such that either $\lim_{k\to \underline k} \psi_{\epsilon,\lambda}(k)=-\infty$ or $\psi_{\epsilon,\lambda}(\underline k)=U(f(\underline k))/\rho$. Let us rule out both situations:
  \begin{itemize}
  \item  If $\lim_{k\to \underline k} \psi_{\epsilon,\lambda}(k)=-\infty$, then
    $\lim_{k\to \underline k} \psi'_{\epsilon,\lambda}(k)=+\infty$. This implies that
  $\lim_{k\to \underline k} \psi_{\epsilon,\lambda}(k) =U(f(\underline k))/\rho$, and we have obtained the desired contradiction.
\item If  $\psi_{\epsilon,\lambda}(\underline k)=U(f(\underline k))/\rho$, then  proceeding as in the end of the proof of Proposition \ref{prop:secHJ:vlambda},  this implies that $\psi_{\epsilon,\lambda}( k)\le U(f( k))/\rho$ for all $k\in [\underline k,\epsilon]$, in contradiction with  $ \psi_{\epsilon,\lambda}(\epsilon)=\lambda>  U(f( \epsilon))/\rho$.
  \end{itemize}
  Therefore the maximal solution is defined in an interval of the form  $(0,\overline{k}(\epsilon,\lambda))$. 
\end{proof}

\begin{remark}\label{rem:4-2}
   Note that if $f(0)=0$, then $ \psi'_{\epsilon,\lambda}(k)$ blows up when $k\to 0^+$: indeed,  from \eqref{eq:chap:MFG_model:secHJ:8},
$    0 < H_q(k,\psi'_{\epsilon,\lambda}(k)) = f(k) - c^*\left( \psi'_{\epsilon,\lambda}(k)\right)$,    
  hence $ c^*\left( \psi'_{\epsilon,\lambda}(k)\right) <f(k)$. Therefore,
  $U'\left(c^*\left( \psi'_{\epsilon,\lambda}(k)\right) \right)> U'(f(k))$. Thus, from  Assumption \ref{ass:secHJ:1},
   $  \psi'_{\epsilon,\lambda}(k)=U'\left( c^*\left( \psi'_{\epsilon,\lambda}(k)\right)\right) > U'(f(k))$
tends to  $+\infty$ as $k\to 0$.
\end{remark}

\begin{lemma} \label{lem:secHJ:Lambda}
  Under the same assumptions as in Lemma~\ref{lem:secHJ:philambda}, for every $\epsilon \in (0, \kappa^*)$,  the set 
\begin{equation}\label{eq:chap:MFG_model:secHJ:22}
  \Lambda_\epsilon=\left\{\lambda>U(f( \epsilon))/\rho \quad \hbox{such that}\quad
    \overline{k}(\epsilon,\lambda)=  \kappa^*\right\}
\end{equation}
is not empty.
\end{lemma}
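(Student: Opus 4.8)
The plan is to show that $\Lambda_\epsilon \neq \emptyset$ by a shooting argument: I will follow the trajectories $\psi_{\epsilon,\lambda}$ backwards from $k=\kappa^*$ as the shooting parameter $\lambda$ varies, and use a topological/connectedness argument to catch the ``correct'' value. More precisely, recall from Lemma~\ref{lem:secHJ:philambda} that for each admissible $\lambda > U(f(\epsilon))/\rho$ the maximal solution is defined on an interval $(0,\overline k(\epsilon,\lambda))$ with $\epsilon < \overline k(\epsilon,\lambda)\le \kappa^*$, and $\psi_{\epsilon,\lambda}$ is strictly increasing and strictly concave there. I would first establish that the map $\lambda \mapsto \overline k(\epsilon,\lambda)$ is continuous on its domain and that the dependence of $\psi_{\epsilon,\lambda}$ on $\lambda$ is monotone (by Cauchy--Lipschitz, trajectories cannot cross), so $\psi_{\epsilon,\lambda}(k)$ is increasing in $\lambda$ for every fixed $k$ on which both are defined. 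The idea is then to identify one value of $\lambda$ for which $\overline k(\epsilon,\lambda) = \kappa^*$ by excluding, via continuity, that $\overline k(\epsilon,\lambda) < \kappa^*$ can persist as $\lambda$ ranges over the whole admissible interval.

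The key structural facts I would use are the following. On one hand, when $\lambda$ is \emph{small}, i.e. close to $U(f(\epsilon))/\rho^+$, the initial point $(\epsilon,\lambda)$ sits near the lower boundary of $\Omega^\uparrow$; since $k\mapsto U(f(k))/\rho$ is a supersolution of the ODE~\eqref{eq:chap:MFG_model:secHJ:ODEup1} on $(0,\kappa^*)$ (the reverse of the inequality~\eqref{eq:5} argument: for $k<\kappa^*$ one has $f'(k) > \rho$, so $\frac{d}{dk}(U(f(k))/\rho) = U'(f(k)) f'(k)/\rho > U'(f(k)) = \cF^\uparrow(k, U(f(k))/\rho)$), the trajectory $\psi_{\epsilon,\lambda}$ is pushed upward and stays above $U(f(\cdot))/\rho$ as $k$ increases toward $\kappa^*$; thus for such $\lambda$ the solution survives up to $\kappa^*$ and $\overline k(\epsilon,\lambda)=\kappa^*$. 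On the other hand, for \emph{large} $\lambda$, I expect $\psi_{\epsilon,\lambda}$ to grow too fast and hit the upper obstacle before $\kappa^*$ — more precisely, for $\lambda$ above some threshold the trajectory reaches the level $\lim_{q\to 0^+} H(k,q)/\rho = (\lim_{c\to\infty}U(c))/\rho$ (if this is finite) or simply blows up before $\kappa^*$, forcing $\overline k(\epsilon,\lambda) < \kappa^*$. Comparing these two regimes and using continuity of $\overline k(\epsilon,\cdot)$, the set $\Lambda_\epsilon$ — where $\overline k(\epsilon,\lambda)=\kappa^*$ — contains an interval of small $\lambda$'s and is in particular nonempty.

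In fact, the cleanest route may be to observe directly that for $\lambda = U(f(\kappa^*))/\rho + \text{(small correction)}$, or even more simply, to run the \emph{downward} flow from $\kappa^*$: consider the Cauchy problem~\eqref{eq:chap:MFG_model:secHJ:ODEup1} with data $u^\uparrow(\kappa^*) = U(f(\kappa^*))/\rho$ but started slightly inside, at $k=\kappa^* - $ small, using the solution $\phi$-type construction as in Proposition~\ref{prop:secHJ:ex_down}; then set $\lambda := \psi(\epsilon)$ for that solution $\psi$, which by strict monotonicity satisfies $\lambda > U(f(\epsilon))/\rho$ and by construction has $\overline k(\epsilon,\lambda) = \kappa^*$. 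The supersolution comparison with $U(f(\cdot))/\rho$ guarantees the solution does not fall onto the lower obstacle before reaching $\kappa^*$, so the trajectory is globally defined on $(0,\kappa^*]$ and hence $\lambda\in\Lambda_\epsilon$.

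The main obstacle I anticipate is controlling the behavior of $\psi_{\epsilon,\lambda}$ near the right endpoint $k=\kappa^*$: since $\cF^\uparrow$ fails to be Lipschitz (indeed is only $C^1$ on the open set $\Omega^\uparrow$ but degenerates at the boundary point $(\kappa^*, U(f(\kappa^*))/\rho)$, as noted in the text before Lemma~\ref{lem:secHJ:philambda}), I cannot simply invoke Cauchy--Lipschitz up to the boundary, and must instead build the relevant solution by a monotone-limit/approximation argument (decreasing $\lambda_n \downarrow$ something, or approximating the endpoint), then verify via the supersolution inequality that the limit genuinely reaches $\kappa^*$ in finite ``$k$-time'' without being trapped by the obstacle $\rho v = U(f(k))$. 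Establishing continuity of $\lambda\mapsto\overline k(\epsilon,\lambda)$ and ruling out a jump at the boundary is the delicate part; everything else (monotonicity in $\lambda$, concavity, the supersolution computation) is routine given the lemmas already proved.
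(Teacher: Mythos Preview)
Your proposal contains a genuine error in the analysis of how the trajectories behave for small versus large $\lambda$, and as a result your shooting argument is pointed in the wrong direction.

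First, there is no ``upper obstacle'' in $\Omega^\uparrow$. By definition (see \eqref{eq:chap:MFG_model:secHJ:12}), $\Omega^\uparrow = \{(k,v): k\in(0,\kappa^*],\ \rho v > U(f(k))\}$ with \emph{no} upper bound on $v$; the restriction $\rho v < \lim_{q\to 0^+}H(k,q)$ appears only in $\Omega^\downarrow$. So your claim that for large $\lambda$ the trajectory ``hits the upper obstacle before $\kappa^*$'' has no content. In fact the opposite is true: for large $v$ the map $v\mapsto\cF^\uparrow(k,v)$ is Lipschitz uniformly in $k\in[\epsilon,\kappa^*]$ (because $H^\uparrow_q$ is bounded away from $0$ for large $q$), which rules out blow-up in finite $k$.

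Second, your comparison with the lower barrier is read backwards. You correctly compute that $k\mapsto U(f(k))/\rho$ is a strict \emph{super}solution on $(0,\kappa^*)$. But since $\cF^\uparrow$ is increasing in $v$, a supersolution is an upper barrier for trajectories starting \emph{below} it, not a lower barrier for trajectories starting above. At any contact point from above one has $\psi' = \cF^\uparrow(k,\psi) = \cF^\uparrow(k,\phi) < \phi'$, so the trajectory crosses downward and exits $\Omega^\uparrow$. Hence for $\lambda$ close to $U(f(\epsilon))/\rho$ the trajectory is \emph{caught} by the boundary before $\kappa^*$, not the other way around.

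The paper's argument is direct and avoids any continuity or limiting construction: simply take any $\lambda > U(f(\kappa^*))/\rho$. Since $\psi_{\epsilon,\lambda}$ is increasing, $\psi_{\epsilon,\lambda}(k)\ge \lambda > U(f(\kappa^*))/\rho > U(f(k))/\rho$ for every $k\in[\epsilon,\kappa^*)$ (the last inequality because $f$ is increasing on $(0,\kappa^*]$), so the lower boundary is never reached; and blow-up is excluded by the Lipschitz bound above. Hence $\overline k(\epsilon,\lambda)=\kappa^*$ and $\lambda\in\Lambda_\epsilon$. No shooting, no monotone limit, no degenerate Cauchy problem at $\kappa^*$ is needed for this lemma.
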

\begin{proof}
  Take $\lambda>U(f(\kappa^*))/\rho$. Assume by contradiction that $\overline{k}(\epsilon,\lambda)<\kappa^*$, where 
$\Bigl( (0,\overline{k}(\epsilon,\lambda)), \psi_{\epsilon,\lambda}   \Bigr)$ is the maximal solution 
of  the Cauchy problem \eqref{eq:chap:MFG_model:secHJ:19}-\eqref{eq:chap:MFG_model:secHJ:21}, (note that $\epsilon<\overline{k}(\epsilon,\lambda)$).
\\
Observe first that $\psi_{\epsilon,\lambda}$ cannot blow up as $k\to \overline{k}(\epsilon,\lambda)$. 
Indeed $v\mapsto \cF^\uparrow( k,\rho v)$ is Lipschitz continuous on $[ \max_{k\in [\epsilon, \kappa^*] } U(f(k))+1, +\infty) $ 
with a Lipschitz constant that does not depend on $k\in [\epsilon, \kappa^*]$. This property prevents $\psi_{\epsilon,\lambda}$  from blowing up in finite time.
\\
Therefore, the function
$\psi_{\epsilon,\lambda}$ can be extended to $\overline{k}(\epsilon,\lambda)$ by continuity, and
\begin{equation}
  \label{eq:chap:MFG_model:secHJ:maximal}
  \psi_{\epsilon,\lambda}( \overline{k}(\epsilon,\lambda)) = U(f(\overline{k}(\epsilon,\lambda)))/\rho,
\end{equation}
  otherwise it would not be the maximal solution.
 On the other hand,  we know that $f$ is increasing in $(0,\kappa^*]$, hence $U(f(\kappa^*))> U(f(k))$ for all $k<\kappa^*$.
In particular, $ U(f(\kappa^*))> U(f(\overline{k}(\epsilon,\lambda)))$.
 From the monotonicity of $\psi_{\epsilon,\lambda}$, we obtain that
 \begin{displaymath}
\psi_{\epsilon,\lambda} ( \overline{k}(\epsilon,\lambda)) \ge
 \psi_{\epsilon,\lambda} ( \epsilon)=\lambda > U(f(\kappa^*))/\rho >  U(f(\overline{k}(\epsilon,\lambda)))/\rho,   
 \end{displaymath}
which contradicts \eqref{eq:chap:MFG_model:secHJ:maximal}.
  \\
  We have proved that if $\lambda>U(f(\kappa^*))/\rho$, then the maximal solution is defined on $(0,\kappa^*]$. Therefore, $\Lambda_\epsilon$ is not empty.
\end{proof}
\begin{proposition}\label{prop:secHJ:psilambda}
  For all $\epsilon<\kappa^*$, there exists $\lambda$ such that $(\epsilon, \lambda) \in \Omega^\uparrow$ and 
   a global solution  $\psi_{\epsilon,\lambda}$  (i.e. defined on $(0,\kappa^*]$) of the Cauchy problem \eqref{eq:chap:MFG_model:secHJ:19}-\eqref{eq:chap:MFG_model:secHJ:21}  such that $\psi_{\epsilon,\lambda}(\kappa^*) = U(f(\kappa^*))/\rho$.
\end{proposition}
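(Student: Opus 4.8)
The plan is to run the shooting argument announced just before the statement, with shooting parameter $\lambda$ and critical value $\lambda^*:=\inf\Lambda_\epsilon$, where $\Lambda_\epsilon$ is the nonempty set of Lemma~\ref{lem:secHJ:Lambda}. First I would record the monotone dependence on $\lambda$: by Cauchy--Lipschitz, two maximal solutions $\psi_{\epsilon,\lambda_1},\psi_{\epsilon,\lambda_2}$ with $\lambda_1<\lambda_2$ cannot cross, so $\lambda\mapsto\psi_{\epsilon,\lambda}(k)$ is increasing on the common interval of existence and $\lambda\mapsto\overline{k}(\epsilon,\lambda)$ is nondecreasing; hence $\Lambda_\epsilon$ is an up-set, $(\lambda^*,+\infty)\subset\Lambda_\epsilon$, and $\lambda^*\ge U(f(\epsilon))/\rho$ is finite. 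It then suffices to prove three facts: (i) $\lambda^*>U(f(\epsilon))/\rho$, so that $(\epsilon,\lambda^*)\in\Omega^\uparrow$; (ii) $\overline{k}(\epsilon,\lambda^*)=\kappa^*$, so that $\psi_{\epsilon,\lambda^*}$ is global on $(0,\kappa^*]$; (iii) $\psi_{\epsilon,\lambda^*}(\kappa^*)=U(f(\kappa^*))/\rho$. Then $\lambda=\lambda^*$ has all the required properties.

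For (i) and (ii) I would argue by contradiction using a \emph{decreasing} limit of shooting solutions, establishing (i) first. If (i) fails, take $\lambda_n\downarrow U(f(\epsilon))/\rho$ in $\Lambda_\epsilon$ and set $\overline{k}_\infty:=\epsilon$; if (i) holds but (ii) fails, take $\lambda_n\downarrow\lambda^*$ in $\Lambda_\epsilon$ and set $\overline{k}_\infty:=\overline{k}(\epsilon,\lambda^*)<\kappa^*$. In both cases the $\psi_{\epsilon,\lambda_n}$ are concave, nonincreasing in $n$, and bounded below by $k\mapsto U(f(k))/\rho$, so by \cite[Theorem~3.3.3]{MR2041617} they converge locally uniformly on $(0,\kappa^*]$ to a concave limit $\psi^*$; passing to the limit in the integral form of \eqref{eq:chap:MFG_model:secHJ:19} exactly as in the proof of Proposition~\ref{prop:secHJ:ex_down} shows that $\psi^*$ solves the ODE wherever its graph lies in $\Omega^\uparrow$, and in the second case (i) lets Cauchy--Lipschitz identify $\psi^*$ with $\psi_{\epsilon,\lambda^*}$ on $(0,\overline{k}_\infty)$; in either case $\psi^*(\overline{k}_\infty)=U(f(\overline{k}_\infty))/\rho$ and $\psi^*\ge U(f(\cdot))/\rho$. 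The contradiction comes from the tangent-line inequality for the concave functions $\psi_{\epsilon,\lambda_n}$ at $\overline{k}_\infty$: since $\psi_{\epsilon,\lambda_n}'(\overline{k}_\infty)=\cF^\uparrow(\overline{k}_\infty,\psi_{\epsilon,\lambda_n}(\overline{k}_\infty))\to U'(f(\overline{k}_\infty))$ by continuity of $\cF^\uparrow$ up to the lower boundary of $\Omega^\uparrow$, letting $n\to\infty$ forces $U(f(k))/\rho\le U(f(\overline{k}_\infty))/\rho+U'(f(\overline{k}_\infty))(k-\overline{k}_\infty)$ for $k$ just above $\overline{k}_\infty$; but $k\mapsto U(f(k))/\rho$ is $C^1$ near $\overline{k}_\infty$ (here $f(\overline{k}_\infty)>0$ since $\delta=0$) with derivative $U'(f(\overline{k}_\infty))\,f'(\overline{k}_\infty)/\rho>U'(f(\overline{k}_\infty))$, because $\overline{k}_\infty<\kappa^*$ gives $f'(\overline{k}_\infty)>\rho$ by strict concavity of $f$ and \eqref{eq:2}. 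This settles (i) and (ii).

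For (iii), suppose instead $\psi_{\epsilon,\lambda^*}(\kappa^*)>U(f(\kappa^*))/\rho$. Then, since also $\psi_{\epsilon,\lambda^*}(k)>U(f(k))/\rho$ for $k\in(0,\kappa^*)$, the graph of $\psi_{\epsilon,\lambda^*}$ over $[\epsilon,\kappa^*]$ is a compact subset of the \emph{open} set $\Omega^\uparrow$, on a neighbourhood of which $\cF^\uparrow$ is $C^1$, hence locally Lipschitz (Lemma~\ref{sec:secHJ:F_C1_reg}). By continuous dependence of ODE solutions on the initial datum, for $\lambda$ slightly below $\lambda^*$ the solution $\psi_{\epsilon,\lambda}$ stays in a compact neighbourhood of that graph inside $\Omega^\uparrow$ on $[\epsilon,\kappa^*]$, while on $(0,\epsilon]$ it automatically remains in $\Omega^\uparrow$ by Lemma~\ref{lem:secHJ:philambda}; hence $\overline{k}(\epsilon,\lambda)=\kappa^*$, i.e. $\lambda\in\Lambda_\epsilon$, contradicting $\lambda^*=\inf\Lambda_\epsilon$. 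Therefore $\psi_{\epsilon,\lambda^*}(\kappa^*)=U(f(\kappa^*))/\rho$, and the proof is complete.

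I expect the main obstacle to be steps (i)--(ii). Because $\cF^\uparrow(k,\cdot)$ has infinite slope along the lower boundary $\rho v=U(f(k))$ of $\Omega^\uparrow$, the usual "the infimum of a shooting interval is attained and is critical" reasoning, which relies on continuous dependence, breaks down precisely at the endpoint one cares about. The way around it is to pass to the monotone \emph{decreasing} limit of shooting solutions — so that concavity is preserved and a locally uniform limit exists — and to extract the contradiction from the single quantitative fact that the obstacle $k\mapsto U(f(k))/\rho$ is a \emph{strict} supersolution of the limiting ODE on $(0,\kappa^*)$, equivalently that $f'>\rho$ there.
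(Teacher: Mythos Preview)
Your argument is correct and follows the paper's shooting strategy: set $\lambda^*=\inf\Lambda_\epsilon$, pass to the monotone limit of the concave $\psi_{\epsilon,\lambda_n}$, and exploit that $f'>\rho$ on $(0,\kappa^*)$. The differences are only organizational. The paper does not separate your steps (i)--(ii): it works directly with the limit $\psi_\epsilon$ (which a priori might touch the obstacle), shows it satisfies the integral equation on all of $(0,\kappa^*]$, and only at the very end verifies $\psi_\epsilon>U(f)/\rho$ on $(0,\kappa^*)$ via the strict-supersolution inequality (the analogue of \eqref{eq:5} for $k<\kappa^*$), thereby identifying $\psi_\epsilon$ with $\psi_{\epsilon,\underline\lambda_\epsilon}$ a posteriori. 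For the terminal condition, the paper rules out $\psi_\epsilon(\kappa^*)>U(f(\kappa^*))/\rho$ by solving a \emph{backward} Cauchy problem from $(\kappa^*,b)$ with $b$ the midpoint, proving this $\xi$ is global on $(0,\kappa^*]$ and that $\xi(\epsilon)<\underline\lambda_\epsilon$; your continuous-dependence argument for (iii) is an equivalent way to exhibit a $\lambda<\lambda^*$ in $\Lambda_\epsilon$. Your tangent-line contradiction for (i)--(ii) is a neat geometric repackaging of the same strict-supersolution fact the paper uses.
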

\begin{proof}
  Consider a decreasing sequence  $(\lambda_n)_{n\in\N}$ in $\Lambda_\epsilon$  (see (\ref{eq:chap:MFG_model:secHJ:22})) such that \\ $\lim_{n\to \infty} \lambda_n=  \underline \lambda_\epsilon=\inf_{\lambda \in  \Lambda_\epsilon} \lambda$.
  It is clear that $(\psi_{\epsilon, \lambda_n})_{n\in \N}$ is a decreasing sequence of functions defined on $(0,\kappa^*]$.
 Moreover, since $(k, \psi_{\epsilon, \lambda_n}(k))\in \Omega^\uparrow$ for $k\in (0, \kappa^*)$,  $\psi_{\epsilon, \lambda_n}$
 is bounded from below by the function $U\circ f /\rho$. Hence, there exists a function $\psi_\epsilon$ defined on $(0,\kappa^*]$ such that $\lim_{n\to +\infty} \psi_{\epsilon, \lambda_n}(k)=\psi_\epsilon(k)$ for all $k\in (0,\kappa^*]$.
\\
  Since $(\psi_{\epsilon, \lambda_n})_{n\in\N}$ is a sequence of concave functions locally uniformly bounded, 
 \cite[Theorem 3.3.3]{MR2041617} ensures that the convergence is uniform on every compact set, thus
 $\psi_\epsilon$ is continuous on $(0,\kappa^*]$.
Extending $\cF^\uparrow(\cdot,\cdot)$ by continuity on the set $\{ (k, U(f(k))/\rho): k\in (0,\kappa^*] \}$,
 we may pass to the limit in the integral form of the differential equation satisfied by $\psi_{\epsilon,\lambda_n}$ and get
  \begin{displaymath}
    \psi_{\epsilon}(k) = \underline{\lambda}_\epsilon +\int_\epsilon^k\cF^\uparrow(\kappa,\psi_{\epsilon}(\kappa))d\kappa.
  \end{displaymath}
 Hence $ \psi_{\epsilon}$ is a solution of \eqref{eq:chap:MFG_model:secHJ:19} on $(0,\kappa^*)$, which implies that $ \psi_{\epsilon}$ is $C^1$  and increasing in $(0,\kappa^*)$.
  \\
  We are left with proving that $\psi_{\epsilon}(\kappa^* )= U(f(\kappa^*))/\rho$. 
It is  already known that $\psi_{\epsilon}(\kappa^* )\ge U(f(\kappa^*))/\rho$. Assume by contradiction that  $\psi_{\epsilon}(\kappa^* )> U(f(\kappa^*))/\rho$. Then, set
  \begin{displaymath}
    b=\frac {\psi_{\epsilon}(\kappa^* )+  U(f(\kappa^*))/\rho} 2,
  \end{displaymath}
and  consider the Cauchy problem on $(0,\kappa^*]$:
\begin{eqnarray*}
  \xi'(k) &=& \cF^\uparrow (k,\xi(k)),
\\
(k,\xi(k))&\in& \Omega^\uparrow,\\
  \xi(\kappa^*) &=& b.
\end{eqnarray*}
It can be proved by contradiction (with the same kind of argument as in the end of the proof of Proposition \ref{prop:secHJ:vlambda})
that the maximal solution of this problem is in fact global, therefore defined on $(0,\kappa^*]$. Cauchy-Lipschitz theorem  implies that $\xi(k)< \psi_\epsilon(k)$ for all $k\in (0,\kappa^*]$. Therefore, $\xi(\epsilon)\in \Lambda_\epsilon$ and $\xi(\epsilon)< \psi_\epsilon(\epsilon)=\underline{\lambda}_\epsilon$, which contradicts the definition of $\underline{\lambda}_\epsilon$.\\
Therefore,  $\psi_{\epsilon}(\kappa^* )= U(f(\kappa^*))/\rho$. The same arguments as in the proof of Proposition \ref{prop:secHJ:vlambda} yield that
$\psi_{\epsilon}(k )>U(f(k))/\rho$ for all $k\in (0, \kappa^*)$. Hence $\psi_{\epsilon}=\psi_{\epsilon,\underline \lambda_\epsilon }$.
This achieves the proof.
\end{proof}
\begin{proposition} \label{prop:secHJ:ex_up}
 Under the same assumptions as in Lemma~\ref{lem:secHJ:philambda},
the Cauchy problem  (\ref{eq:chap:MFG_model:secHJ:ODEup1})-(\ref{eq:chap:MFG_model:secHJ:ODEup3}) has a unique solution 
 $u^\uparrow  \in C^1( (0,\kappa^*])\cap C^2(0,\kappa^*)$. Moreover $u^\uparrow$ is  strictly concave on $(0,\kappa^*)$.
\end{proposition}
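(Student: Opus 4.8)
The plan is to mirror the strategy already used for the ``decreasing'' branch in Propositions~\ref{prop:secHJ:vlambda} and~\ref{prop:secHJ:ex_down}: almost all of the analytic work has been carried out in Lemma~\ref{lem:secHJ:philambda}, Lemma~\ref{lem:secHJ:Lambda} and Proposition~\ref{prop:secHJ:psilambda}, so what remains is (i) to extract an honest solution of \eqref{eq:chap:MFG_model:secHJ:ODEup1}--\eqref{eq:chap:MFG_model:secHJ:ODEup3} from the functions $\psi_\epsilon$ constructed there, (ii) to upgrade its regularity to $C^1$ up to $\kappa^*$ and to $C^2$ with strict concavity on $(0,\kappa^*)$, and (iii) to prove uniqueness. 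For (i) I would fix any $\epsilon\in(0,\kappa^*)$ and set $u^\uparrow:=\psi_\epsilon$, the global solution produced by Proposition~\ref{prop:secHJ:psilambda}: it is $C^1$ and increasing on $(0,\kappa^*)$, solves $(u^\uparrow)'=\cF^\uparrow(\cdot,u^\uparrow)$ there, satisfies $(k,u^\uparrow(k))\in\Omega^\uparrow$ for $0<k<\kappa^*$, and $u^\uparrow(\kappa^*)=U(f(\kappa^*))/\rho$, so \eqref{eq:chap:MFG_model:secHJ:ODEup2} and \eqref{eq:chap:MFG_model:secHJ:ODEup3} already hold. To reach $\kappa^*$: as $k\to\kappa^*{}^-$ the point $(k,u^\uparrow(k))$ tends to $(\kappa^*,U(f(\kappa^*))/\rho)$, which lies on the boundary of $\Omega^\uparrow$; since $\cF^\uparrow$ extends continuously to that boundary set and, because $H^\uparrow(\kappa^*,U'(f(\kappa^*)))=U(f(\kappa^*))$, takes there the value $U'(f(\kappa^*))$, one gets $(u^\uparrow)'(k)=\cF^\uparrow(k,u^\uparrow(k))\to U'(f(\kappa^*))$. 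Hence $u^\uparrow$ extends to a $C^1$ function on $(0,\kappa^*]$ with $(u^\uparrow)'(\kappa^*)=U'(f(\kappa^*))=\cF^\uparrow(\kappa^*,u^\uparrow(\kappa^*))$, so \eqref{eq:chap:MFG_model:secHJ:ODEup1} holds on all of $(0,\kappa^*]$.

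For the interior regularity in step (ii) I would differentiate the identity $H\bigl(k,(u^\uparrow)'(k)\bigr)=\rho\,u^\uparrow(k)$ (valid because $(k,(u^\uparrow)'(k))\in\Theta^\uparrow$ on $(0,\kappa^*)$), using $H_k(k,q)=f'(k)q$ and $H_q\bigl(k,(u^\uparrow)'(k)\bigr)>0$ (since $(u^\uparrow)'(k)>U'(f(k))$ there), to obtain, exactly as in Proposition~\ref{prop:secHJ:vlambda},
\[
(u^\uparrow)''(k)=\frac{\rho-f'(k)}{H_q\bigl(k,(u^\uparrow)'(k)\bigr)}\,(u^\uparrow)'(k),\qquad 0<k<\kappa^*.
\]
The right-hand side is continuous on $(0,\kappa^*)$, so $u^\uparrow\in C^2(0,\kappa^*)$; and since $(u^\uparrow)'(k)>0$ and $H_q\bigl(k,(u^\uparrow)'(k)\bigr)>0$ while $f'(k)>f'(\kappa^*)=\rho$ on $(0,\kappa^*)$ by strict concavity of $f$ and the characterization \eqref{eq:1} of $\kappa^*$, the numerator is negative, so $(u^\uparrow)''(k)<0$ and $u^\uparrow$ is strictly concave on $(0,\kappa^*)$.

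For uniqueness (step (iii)), suppose $u_1,u_2$ both solve \eqref{eq:chap:MFG_model:secHJ:ODEup1}--\eqref{eq:chap:MFG_model:secHJ:ODEup3}. If $u_1(k_0)=u_2(k_0)$ for some $k_0\in(0,\kappa^*)$, then $(k_0,u_i(k_0))\in\Omega^\uparrow$ and, $\cF^\uparrow$ being $C^1$ on $\Omega^\uparrow$ (Lemma~\ref{sec:secHJ:F_C1_reg}), the Cauchy--Lipschitz theorem forces $u_1\equiv u_2$ on $(0,\kappa^*)$, hence on $(0,\kappa^*]$. Otherwise, by continuity we may assume $u_1<u_2$ on $(0,\kappa^*)$; using that $\cF^\uparrow(k,\cdot)$ is non-decreasing (it is the inverse of the increasing map $H^\uparrow(k,\cdot)$) and integrating the ODE backward from $\kappa^*$, where $u_1(\kappa^*)=u_2(\kappa^*)$, one gets for every $k\in(0,\kappa^*)$
\[
0<u_2(k)-u_1(k)=-\int_k^{\kappa^*}\bigl(\cF^\uparrow(\kappa,u_2(\kappa))-\cF^\uparrow(\kappa,u_1(\kappa))\bigr)\,d\kappa\le 0,
\]
a contradiction; so the solution is unique and, a posteriori, $u^\uparrow=\psi_\epsilon$ is independent of $\epsilon$. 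I expect this uniqueness step, together with the correct matching of the derivative at the endpoint, to be the only place requiring real care: Cauchy--Lipschitz is unavailable at $\kappa^*$ because $v\mapsto\cF^\uparrow(\kappa^*,v)$ is not Lipschitz there, so one must instead exploit the monotonicity of $\cF^\uparrow$ in $v$ combined with backward integration from $\kappa^*$; everything else (non-blow-up of the $\psi_{\epsilon,\lambda}$, nonemptiness of $\Lambda_\epsilon$, the limiting construction of $\psi_\epsilon$, and the continuous extension of $\cF^\uparrow$ to the boundary) has already been settled in Lemma~\ref{lem:secHJ:philambda}--Proposition~\ref{prop:secHJ:psilambda}.
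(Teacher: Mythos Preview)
Your proof is correct and follows essentially the same approach as the paper: existence is inherited from Proposition~\ref{prop:secHJ:psilambda}, and uniqueness is the exact analogue of the argument in Proposition~\ref{prop:secHJ:ex_down}, with the monotonicity of $\cF^\uparrow(k,\cdot)$ (increasing, as the inverse of the increasing $H^\uparrow(k,\cdot)$) replacing that of $\cF^\downarrow(k,\cdot)$ and backward integration from $\kappa^*$ replacing forward integration. The paper's own proof is just the two-line statement that existence follows from Proposition~\ref{prop:secHJ:psilambda} and uniqueness from the same arguments as in Proposition~\ref{prop:secHJ:ex_down}; you have spelled out those details accurately.
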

\begin{proof}
  Existence is a consequence of Proposition \ref{prop:secHJ:psilambda}. Uniqueness is proved exactly with the same arguments as in the proof of Proposition \ref{prop:secHJ:ex_down}.
\end{proof}
\begin{remark}\label{rem:4-3}
  From Remark \ref{rem:4-2}, it is possible that $\lim_{k\to 0} \frac{du^\uparrow}{dk}(k)=+\infty$ and that $
\lim_{k\to 0} u^\uparrow(k)=-\infty$.
\end{remark}
\subsubsection{ End of the proof of Theorem~\ref{th:secHJ:main} in the particular case where
  $\delta=0$}
\label{sec:proof-theor-refth:s}
$\;$
\paragraph{Existence}
 With $u^\uparrow$ and $u^\downarrow$ as in Propositions \ref{prop:secHJ:ex_up} and \ref{prop:secHJ:ex_down},
define 
\begin{equation}  \label{eq:chap:MFG_model:secHJ:28}
u(k) = \begin{cases}
&u^\uparrow(k),\quad\quad  \hbox{if } k\in (0,\kappa^*],\\
&u^\downarrow(k), \quad\quad  \hbox{if } k\in[\kappa^*,+\infty).
\end{cases}  
\end{equation}
The properties of    $u^\uparrow$ and $u^\downarrow$ ensure that
$u$ is of class $C^1$, increasing and strictly concave in $(0,+\infty)$, and  $C^2$   in
$(0,\kappa^*)\cup (\kappa^*, +\infty)$. In particular,
$  u^\uparrow(\kappa^*)= u^\downarrow(\kappa^*)=\frac{1}{\rho} U(f(\kappa^*))$ and 
$ \frac {du^\uparrow}{dk} (\kappa^*)=\frac { du^\downarrow}{dk}(\kappa^*)=U'(f(\kappa^*))$. 
 Moreover,
\begin{displaymath}
  H_q\left(k,u'(k)\right) \begin{cases}
&>0,\quad\quad  \hbox{if } k\in (0,\kappa^*),\\
&<0, \quad\quad  \hbox{if } k\in (\kappa^*,+\infty),\\
&= 0 \quad\quad  \hbox{  if } k= \kappa^*.
\end{cases}  
\end{displaymath}
Hence, $u$ satisfies  (\ref{eq:chap:MFG_model:secGeneralities:HJ})-(\ref{eq:chap:MFG_model:secHJ:3}).

\bigskip

\paragraph{Uniqueness and characterization by (\ref{eq:chap:MFG_model:secGeneralities:valueFunction})}
Let us now prove that if \\
 $u\in C^1(0,+\infty)\cap C^2(  (0,\kappa^*) \cup (\kappa^* , +\infty))  $  satisfies  (\ref{eq:chap:MFG_model:secGeneralities:HJ})-(\ref{eq:chap:MFG_model:secHJ:3}), then it is the 
value function of problem (\ref{eq:chap:MFG_model:secGeneralities:2}). This will yield the uniqueness of a classical solution of  (\ref{eq:chap:MFG_model:secGeneralities:HJ})-(\ref{eq:chap:MFG_model:secHJ:3}) as well as the characterization of the value function of  (\ref{eq:chap:MFG_model:secGeneralities:2}).
\\
Let us set $\chi(\cdot)= c^* (u'(\cdot))  =f(\cdot) -H_q(\cdot, u'(\cdot))$.  
Assumptions  \ref{ass:secHJ:1}, \ref{ass:secHJ:3},  and
Lemma \ref{lem:secFP:1} below imply that $k\mapsto H_q(\cdot, u'(\cdot))$ is locally Lipschitz continuous on $(0,+\infty)$.
This property and (\ref{eq:chap:MFG_model:secHJ:2})-(\ref{eq:chap:MFG_model:secHJ:3}) imply that  for any $k_0\in (0,+\infty)$, 
there is a  unique solution $k$ of the Cauchy problem
  \begin{equation*}
    \begin{split}
    \frac{dk}{dt}(t) &=  f\left(k(t)\right)- \chi\left(k(t)\right) ,\quad t>0\\
    k(0)&=k_0,      
    \end{split}
  \end{equation*}
It is an admissible trajectory for problem (\ref{eq:chap:MFG_model:secGeneralities:2}). Therefore $u$ is not greater than
the value function of the optimal control problem  (\ref{eq:chap:MFG_model:secGeneralities:valueFunction}).
\\
On the other hand, consider     $c   \in   L^1_{{\rm loc}}( \R_+; \R_+)$, $\ell\in  L^1_{{\rm loc}}(\R_+; \R_+^d)$, $k\in W^{1,1}_{\rm{loc}} ( \R_+) $,
such that 
\begin{equation*}
  \begin{split}
    	\frac{dk}{dt}(t) = F(k(t),\ell(t))-w\cdot \ell(t) -\delta k(t)-c(t), \quad   \hbox{ for a.a. }t>0,
\\
k(0)= k_0,
\\
k(t)\geq 0, \quad   \hbox{ for a.a. }t>0.
  \end{split}
\end{equation*}
Observe that for almost every $t\geq 0$, 
\begin{equation*}
  \begin{split}
    	&\sup_{\bar{c}\geq 0,\bar{l}\geq 0}\left\{U(\bar{c})+u'(k(t))\left( F(k(t),\bar{l})-w\bar{l}-\delta k(t)- \bar{c}\right)\right\} 
  \\  
   \ge & U(c(t))+u'(k(t))\left( F(k(t),\ell(t))-w\ell(t)-\delta k(t)- c(t)\right) \\ =& U(c(t))+u'(k(t)) \frac{dk}{dt}(t).
  \end{split}
\end{equation*}
The left hand side coincides with $H\left(k(t), u'(k(t))\right)= \rho u (k (t))$. Hence,
$U(c(t) ) \le - u'(k(t)) 	\frac{dk}{dt}(t)  +   \rho u (k (t))$.
This implies that $\int_0^\infty U(c(t)) e^{-\rho t} dt \le u(k_0)$.  Hence, $u$ is
not smaller than the value function of problem   (\ref{eq:chap:MFG_model:secGeneralities:valueFunction}).
\\
We have proved that if  $u\in C^1(0,+\infty)$  satisfies  (\ref{eq:chap:MFG_model:secGeneralities:HJ})-(\ref{eq:chap:MFG_model:secHJ:3}), then it is the 
value function of problem (\ref{eq:chap:MFG_model:secGeneralities:2}).

\subsection{The case where $\delta>0$} 
\label{sec:proof-theor-refth:s-1}

$\;$

\begin{lemma}\label{lem:secHJ:ext-1}
  We make  Assumption \ref{ass:secHJ:3} and suppose furthermore that  $\delta>0$.
  Then there exits a unique $k_0\in(0,+\infty)$ such that 
  \begin{equation}
    \label{eq:6}
f(k_0) = 0.    
  \end{equation}
The function  $f$ takes  positive values on $(0,k_0)$ and negative values on $(k_0,+\infty)$. Moreover,
$f'(k_0)<0$ and $\kappa^*<k_0$, where $\kappa^*$ is the unique positive number such that $f'(\kappa^*)=\rho$, see (\ref{eq:1}).
\end{lemma}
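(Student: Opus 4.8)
The statement is a collection of elementary consequences of the structural Assumption~\ref{ass:secHJ:3} in the case $\delta>0$, so the plan is to unpack that assumption carefully. First I would record what Assumption~\ref{ass:secHJ:3}.2 gives when $\delta>0$: the map $k\mapsto f(k,w)$ (written $f(k)$ here) is continuous on $\R_+$, strictly concave on $(0,+\infty)$, satisfies $f(0)\ge 0$, has $f'(0^+)=+\infty$, and, crucially, $f'(k)\to-\delta<0$ as $k\to+\infty$. The third bullet of Remark~\ref{rem:1} already notes that $\delta>0$ forces $\lim_{k\to+\infty}f(k)=-\infty$; I would either cite that or reprove it in one line by integrating $f'(k)\le -\delta/2$ for $k$ large.

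\textbf{Existence and uniqueness of $k_0$.} Since $f$ is continuous on $[0,+\infty)$, with $f(0)\ge 0$ and $f(k)\to-\infty$, the intermediate value theorem produces a zero $k_0\in(0,+\infty)$ (note $k_0>0$ even if $f(0)=0$, because $f'(0^+)=+\infty$ makes $f$ strictly positive just to the right of $0$; I would spell this out). Uniqueness of the zero in $(0,+\infty)$, together with the sign statement ($f>0$ on $(0,k_0)$, $f<0$ on $(k_0,+\infty)$), follows from strict concavity: a strictly concave function that is $\ge 0$ at an interior point cannot return to $0$ twice on $(0,+\infty)$ without violating concavity — more precisely, $f'$ is strictly decreasing, so once $f$ starts decreasing it stays decreasing, and a strictly concave function has its super-level set $\{f\ge 0\}$ an interval; combined with $f(0)\ge 0$ this interval is $[0,k_0]$ (or $[0,k_0)$ at the right endpoint, but $f(k_0)=0$), giving the sign pattern. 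I would phrase this via: $f$ is strictly concave and eventually $-\infty$, hence $f$ is (weakly) increasing then strictly decreasing, or strictly decreasing throughout; in either case there is exactly one zero after the maximum.

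\textbf{The inequalities $f'(k_0)<0$ and $\kappa^*<k_0$.} For $f'(k_0)<0$: if $f'(k_0)\ge 0$, then by strict concavity $f'(k)>f'(k_0)\ge 0$ for all $k<k_0$, so $f$ is strictly increasing on $[0,k_0]$ and $f(k_0)>f(0)\ge 0$, contradicting $f(k_0)=0$; hence $f'(k_0)<0$. For $\kappa^*<k_0$: recall $\kappa^*$ is the unique solution of $f'(\kappa^*)=\rho$ with $\rho>0$ (Assumption~\ref{ass:secHJ:3} guarantees existence and uniqueness since $f'$ is continuous, strictly decreasing, with $f'(0^+)=+\infty$ and $f'(+\infty)=-\delta<0<\rho$). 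Since $f'$ is strictly decreasing and $f'(\kappa^*)=\rho>0>f'(k_0)$, we get $\kappa^*<k_0$.

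\textbf{Expected main obstacle.} There is essentially no hard step here; the only thing requiring a little care is handling the boundary behaviour at $k=0$ (distinguishing $f(0)=0$ from $f(0)>0$) and making sure the "$f$ increases then decreases, or decreases throughout" dichotomy is stated cleanly from strict concavity plus $\lim_{k\to+\infty}f=-\infty$. I would present the argument in the order: (i) $f(k)\to-\infty$ (cite Remark~\ref{rem:1}); (ii) $f>0$ on a right-neighbourhood of $0$; (iii) existence of $k_0$ by IVT; (iv) uniqueness and sign pattern by strict concavity; (v) $f'(k_0)<0$; (vi) $\kappa^*<k_0$ by monotonicity of $f'$.
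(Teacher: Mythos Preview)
Your proposal is correct and complete; the paper itself omits the proof entirely, stating only ``Since the proof is elementary, we skip it for brevity,'' so your elementary argument based on strict concavity, the intermediate value theorem, and the monotonicity of $f'$ is exactly the kind of verification the authors had in mind.
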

\begin{proof}
  Since the proof is elementary, we skip it for brevity.
\end{proof}

\medskip

\begin{proof}[ Proof of Theorem \ref{th:secHJ:main} when  $\delta>0$]
Lemma \ref{lem:secHJ:ext-1} implies that  in the interval $(0,k_0)$
 which contains $\kappa^*$ and where $f$ is positive, it is possible to repeat the construction
 done in paragraphs \ref{sec:cauchy-probl-refeq:c} and \ref{sec:cauchy-probl-refeq:c2}. New arguments will be needed to construct the solution in $[k_0,+\infty)$.

\medskip

{\bf Step 1.} In $(0,k_0)$, it is possible to repeat the construction made  in paragraphs \ref{sec:cauchy-probl-refeq:c} and \ref{sec:cauchy-probl-refeq:c2}: 
there exists a unique classical solution  $u_1\in C^1(0, k_0)$ of the following problem:
  \begin{eqnarray}
    \label{eq:chap:MFG_model:secHJ:ext-2}
    -\rho u_1(k) + H\left(k,u_1'(k)\right)&=&0,  \quad \hbox{for } 0<k<k_0,\\
\label{eq:chap:MFG_model:secHJ:ext-3}    H_q\left(k,u_1'(k)\right)&>& 0,\quad    \hbox{for } 0<k< \kappa^*,\\
\label{eq:chap:MFG_model:secHJ:ext-4}    H_q\left(k,u_1'(k)\right)&<& 0,\quad   \hbox{for } \kappa^* < k <k_0.
  \end{eqnarray}
  The function  $u_1$ is  strictly concave and increasing in $(0,k_0)$.\\
  Since f is continuous and concave and $\lim_{k\to 0} f'(k)=+\infty$, $f'(k_0)<0$,  there exists  $\bar{k}\in(\kappa^* ,k_0)$ such that $    f(\bar{k}) = \max_{k\in[0,k_0]}f(k)$.  Since $u_1(\cdot)$ is increasing, $ \lim_{k\to k_0} u_1(k_0)\ge u_1(\bar k)$. On the other hand, $  \rho u_1(\bar k)>U (f(\bar k))$ (see paragraph \ref{sec:cauchy-probl-refeq:c}). Since $U$ is increasing, $U (f((\bar k))> \lim_{k\to k_0} U(f(k))= \lim_{c\to 0} U(c)$ (which may be $-\infty$). Therefore,
  \begin{equation*}
     \rho\lim_{k\to k_0} u_1(k_0)> \lim_{c\to 0} U(c).
  \end{equation*}
    With the same kind of arguments as in the proof of Proposition \ref{prop:secHJ:vlambda},
  it can also be  proved that $\rho u_1(k_0)< \lim_{c\to +\infty} U(c)$.
  This implies that $u_1(\cdot)$ can be extended by continuity to $(0,k_0]$ and that
  \begin{equation}
    \label{eq:7}
  	\lim_{c\to 0} U(c)< \rho u_1(k_0)<  \lim_{c\to \infty} U(c) .
  \end{equation}
The function $u'_1(\cdot)$ can then be extended by continuity to $k=k_0$ and \eqref{eq:chap:MFG_model:secHJ:ext-2} holds up to $k=k_0$.

\medskip

{\bf Step 2.} We are left with constructing the solution in $(k_0,+\infty)$.\\
Observe first that,
for any $k\ge k_0$, $q\mapsto H(k,q)$ is decreasing  from  \eqref{eq:chap:MFG_model:secHJ:8}, and that
\begin{enumerate}
\item $\lim_{q\to 0} H(k,q)=  \lim_{c\to +\infty} U(c)$ 
\item   Since $\lim_{q\to +\infty} c^* (q)= 0$ and $U(c)-cq +f(k)q \le U(c)$, we deduce that 
\begin{displaymath}
	\lim_{q\to +\infty} H(k,q) \le \lim_{c\to 0} U(c).
\end{displaymath}
\end{enumerate}
Hence, for any $k\ge k_0$, $q\mapsto H(k,q)$ maps $(0,+\infty)$
onto the interval \\ $ \left(\lim_{c\to 0} U(c), \lim_{c\to +\infty} U(c)\right)$
and has a right inverse $z\mapsto \cF(k,z)$: \\
for any $z\in  \left(\lim_{c\to 0} U(c), \lim_{c\to +\infty} U(c)\right)$, there is a unique $\cF(k,z)>0$ such that $H(k, \cF(k,z)) =z$.


\medskip

Let $\varepsilon>0$  be  small enough  so that $ \rho (u_1(k_0)-\varepsilon) >  \lim_{c\to 0} U(c) $, see (\ref{eq:7}). Set
\begin{equation}
\Omega=\left\{(k,v) \;: \;  k_0 \leq  k \text{ and } \rho (u_1(k_0)-\varepsilon) <\rho v<\lim_{c\to +\infty} U(c) \right\}.  
\end{equation}
Note that $(k_0,u_1(k_0))\in\Omega$.
It is possible to prove that $\cF(\cdot,\cdot)$ is of class $C^1$ on $\Omega$.
Furthermore, it can be seen that  $v\mapsto \cF(k,v)$ is Lipschitz continuous on $[u_1(k_0)-\varepsilon,\lim_{c\to \infty }U(c)/\rho]$
 with a Lipschitz  constant which does not depend on $k\in[k_0,+\infty)$.

Consider the Cauchy problem
\begin{eqnarray}\label{eq:chap:MFG_model:secHJ:ext-5}
  u_2'(k) &=& \cF (k,u_2(k)),\quad \quad\quad \hbox{for } k\ge k_0,\\
\label{eq:chap:MFG_model:secHJ:ext-6}
  (k,u_2(k))&\in& \Omega,\\
\label{eq:chap:MFG_model:secHJ:ext-7}
  u_2(k_0) &=& u_1(k_0).
\end{eqnarray}
From Cauchy-Lipchitz theorem, there is a unique maximal solution of \eqref{eq:chap:MFG_model:secHJ:ext-5}-\eqref{eq:chap:MFG_model:secHJ:ext-7}.
 The same arguments as in the proof of Proposition \ref{prop:secHJ:vlambda} yield that the solution is indeed global, 
i.e. defined on  $[k_0,+\infty)$, increasing and strictly concave.

\medskip

{\bf Step 3.}
  Set
\begin{equation*}
u(k) =	\begin{cases}
u_1(k),\quad \quad \text{if }k\in(0,k_0],\\
u_2(k),\quad  \quad \text{if }k\in [k_0,+\infty). 
\end{cases}  
\end{equation*}
From what precedes, $u\in C^1(0,+\infty)$, and   $\rho u(k) = H(k,u'(k))$ for any $k\in(0,+\infty)$.
  Note that $u$ is also $C^2$ in $(0,\kappa^*)\cup (\kappa^*,+\infty)$.
  Hence, $u$ is a classical solution of  (\ref{eq:chap:MFG_model:secGeneralities:HJ})-\eqref{eq:chap:MFG_model:secHJ:3}.
The remaining part of the proof (uniqueness and verification result) is exactly as in paragraph~\ref{sec:proof-theor-refth:s}.
\end{proof}

\section{The distribution of capital}\label{sec:secGeneralities:fokk-planck-equat}

We still assume that $w$,  the prices of the production factors, is a fixed vector in $(0,+\infty)^d$; we keep omitting $w$ everywhere.
The optimal investment policy of a firm with capital $k$ is $H_q(k, u'(k))$, where $u$ is the solution of  \eqref{eq:chap:MFG_model:secGeneralities:HJ}-\eqref{eq:chap:MFG_model:secHJ:3}. We are interested in finding a weak solution $m$  of the following problem:
\begin{eqnarray}
  \label{eq:13}
  \frac{d}{d k}\left(m H_q\left(\cdot, \frac{du}{d k}(\cdot) \right)\right) = \eta(\cdot, u (\cdot)) - \nu m(\cdot),
\\
  \label{eq:14}
     \nu \int_{\R_+}  m(k) dk= \int_{\R_+} \eta(k, u(k))  dk.
\end{eqnarray}
From \eqref{eq:chap:MFG_model:secHJ:2}-\eqref{eq:chap:MFG_model:secHJ:3}, we see that if
\eqref{eq:13} holds, then the optimal investment strategy has the effect of pushing $m$ toward $\kappa ^ *$.
It is therefore important to understand whether $m$ has a singularity at $k=\kappa ^ *$. For that,
the following lemma gives information on the behavior of $u$ near $\kappa ^ *$:
\begin{lemma}
  \label{lem:secFP:1}
   Under Assumptions \ref{ass:secHJ:1} and \ref{ass:secHJ:3}, there exist $\epsilon>0$ and $M>0$ such that
\begin{eqnarray}
    \label{eq:chap:MFG_model:FP1}
    0\leq &H_q(\kappa,u'(k)) \leq &M(\kappa^*-k), \quad \quad  \hbox{ if } \quad k\in [ \kappa^*-\epsilon, \kappa^*],\\
    \label{eq:chap:MFG_model:FP2}
     M(\kappa^*-k)\leq &H_q (k,u'(k)) \leq &0, \quad   \quad \quad \quad \quad \quad \hbox{ if } \quad   k \in [\kappa^*, \kappa^* +\epsilon].
  \end{eqnarray}
\end{lemma}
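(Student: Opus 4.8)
The statement is a quantitative "firms never reach $\kappa^*$" estimate, and the natural route is to exploit the structure already laid bare in Section \ref{sec:optim-contr-probl-1}: near $\kappa^*$ the solution $u$ is built from the Cauchy problems \eqref{eq:chap:MFG_model:secHJ:ODEdown1}--\eqref{eq:chap:MFG_model:secHJ:ODEdown3} and \eqref{eq:chap:MFG_model:secHJ:ODEup1}--\eqref{eq:chap:MFG_model:secHJ:ODEup3}, and $b(k):=H_q(k,u'(k))$ is exactly the quantity we must bound. The sign statements $b\ge 0$ on $[\kappa^*-\epsilon,\kappa^*]$ and $b\le 0$ on $[\kappa^*,\kappa^*+\epsilon]$ are just a restatement of \eqref{eq:chap:MFG_model:secHJ:2}--\eqref{eq:chap:MFG_model:secHJ:3} and $b(\kappa^*)=0$, so the content is the linear upper bound $|b(k)|\le M|k-\kappa^*|$.

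**Key steps.** First I would write the ODE satisfied by $b$ itself. Differentiating the Hamilton--Jacobi equation $\rho u(k)=H(k,u'(k))$ (which is licit on $(0,\kappa^*)\cup(\kappa^*,+\infty)$ since $u\in C^2$ there, by Theorem \ref{th:secHJ:main}) gives $\rho u'(k)=H_k(k,u'(k))+H_q(k,u'(k))u''(k)$. Using $H_k(k,q)=f'(k)q$ (from \eqref{eq:chap:MFG_model:secGeneralities:32}) this rearranges to
\begin{equation*}
  b(k)u''(k)=\bigl(\rho-f'(k)\bigr)u'(k),
\end{equation*}
as already noted in paragraph \ref{sec:general-orientation}. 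Next, differentiate $b(k)=f(k)-c^*(u'(k))$ (using \eqref{eq:chap:MFG_model:secHJ:8}): $b'(k)=f'(k)-\frac{u''(k)}{U''(c^*(u'(k)))}$. Combining with the previous identity to eliminate $u''$ yields a first-order linear ODE for $b$ on each side of $\kappa^*$,
\begin{equation*}
  b'(k)=f'(k)-\frac{\rho-f'(k)}{U''(c^*(u'(k)))}\cdot\frac{u'(k)}{b(k)},
\end{equation*}
which is singular at $k=\kappa^*$ because $b(\kappa^*)=0$. Rather than integrate this, the clean move is to work with $b^2$: multiplying by $2b$,
\begin{equation*}
  (b^2)'(k)=2b(k)f'(k)-\frac{2(\rho-f'(k))u'(k)}{U''(c^*(u'(k)))}.
\end{equation*}
As $k\to\kappa^*$, all quantities on the right have finite limits ($f',u'$ are continuous at $\kappa^*$ since $u\in C^1$, and $U''(c^*(u'(\kappa^*)))=U''(f(\kappa^*))<0$ by Assumption \ref{ass:secHJ:1} and $c^*(u'(\kappa^*))=f(\kappa^*)$), and the first term vanishes since $b(\kappa^*)=0$; moreover $(\rho-f'(k))$ changes sign at $\kappa^*$ exactly so that $(b^2)'(k)$ has a one-sided limit of the correct sign. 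By strict concavity of $f$, $f'(k)>\rho$ for $k<\kappa^*$ and $f'(k)<\rho$ for $k>\kappa^*$, so $(\rho-f'(k))u'(k)/U''(\cdots)>0$ near $\kappa^*$ on both sides; hence there are constants $0<m_0<M_0$ and $\epsilon>0$ with $m_0\le |(b^2)'(k)|\le M_0$ for $0<|k-\kappa^*|\le\epsilon$, with the sign chosen so $b^2$ decreases toward $0$ as $k\to\kappa^*$. Integrating from $k$ to $\kappa^*$ gives $b(k)^2\le M_0|k-\kappa^*|$; this already shows $|b(k)|=O(\sqrt{|k-\kappa^*|})$, which suffices for integrability of $1/b$ near $\kappa^*$ only marginally — so I need to push to the linear bound.

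**Getting the linear bound.** To upgrade $O(\sqrt{|k-\kappa^*|})$ to $O(|k-\kappa^*|)$ I would feed the crude bound back into the ODE for $b$. Write $b(k)=f(k)-c^*(u'(k))$ and estimate $u'(k)-U'(f(k))$: since $b(k)=f(k)-c^*(u'(k))$ and $c^*=(U')^{-1}$, we get $u'(k)=U'(f(k)-b(k))=U'(f(k))-U''(f(k))\,b(k)+O(b(k)^2)$ by Taylor expansion (valid since $U\in C^2$ and $f(k)$ stays in a compact subset of $(0,\infty)$ near $\kappa^*$). Plug this into the Hamilton--Jacobi equation written as $\rho u(k)=U(c^*(u'(k)))+f(k)u'(k)=U(f(k)-b(k))+f(k)u'(k)$, Taylor-expand $U(f(k)-b(k))=U(f(k))-U'(f(k))b(k)+O(b(k)^2)$, and use $f(k)u'(k)=f(k)U'(f(k))+O(b(k))$. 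One also uses the other characterization, $\rho u(\kappa^*)=U(f(\kappa^*))$, together with $u(k)=u(\kappa^*)+u'(\kappa^*)(k-\kappa^*)+O((k-\kappa^*)^2)$ and $f(k)=f(\kappa^*)+f'(\kappa^*)(k-\kappa^*)+O((k-\kappa^*)^2)$ with $f'(\kappa^*)=\rho$, $u'(\kappa^*)=U'(f(\kappa^*))$. Carrying out this expansion, the $O(b)$ and $O(k-\kappa^*)$ leading terms on both sides cancel by the defining relations, and one is left with an identity of the form $c_1\,b(k)+c_2\,(k-\kappa^*) = O(b(k)^2)+O((k-\kappa^*)^2)$ with $c_1\ne 0$; the crude bound $b(k)^2=O(|k-\kappa^*|)$ is not yet enough here, so instead I would set up the argument as a bootstrap/Gronwall on the ratio $r(k)=b(k)/(k-\kappa^*)$ directly from the ODE $b'(k)=f'(k)-\frac{(\rho-f'(k))u'(k)}{U''(c^*(u'(k)))\,b(k)}$: writing $f'(k)-\rho = f''(\xi)(k-\kappa^*)$ and noting $r$ satisfies $r'(k)(k-\kappa^*)+r(k)=f'(k)+\frac{A(k)}{r(k)}$ with $A(k)\to A_*:=-f''(\kappa^*)U'(f(\kappa^*))/U''(f(\kappa^*))>0$, one sees the only way $r$ stays bounded and bounded away from $0$ (forced by the signs of $b$ and $k-\kappa^*$, plus the $\sqrt{\cdot}$ bound ruling out $r\to\infty$) is $r(\kappa^*)^2 = A_*$ more precisely $r(k)\to\sqrt{A_*}$ wait — re-examining, the stationary equation $r+ r = \rho + A_*/r$ at $k=\kappa^*$... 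I would instead simply take $M$ in the statement to be any constant exceeding $\limsup_{k\to\kappa^*}|b(k)/(k-\kappa^*)|$, which is finite by the ODE analysis above (dividing $b'=f'-\frac{(\rho-f')u'}{U''b}$ by the known-finite $\limsup$), and then shrink $\epsilon$.

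**Main obstacle.** The genuinely delicate point is the singular ODE at $k=\kappa^*$: $b$ vanishes there and appears in a denominator, so neither existence/uniqueness nor the rate of vanishing is automatic. The sign structure — $(\rho-f'(k))$ vanishing to first order at $\kappa^*$ with the right sign on each side, matching the first-order vanishing of $b$ — is precisely what makes the ODE for $b^2$ (or for the ratio $r$) regular, and I expect the bulk of the real work to be a careful one-sided analysis near $\kappa^*$ showing $\lim_{k\to\kappa^*}b(k)/(k-\kappa^*)$ exists and is finite and nonzero on each side, from which both \eqref{eq:chap:MFG_model:FP1} and \eqref{eq:chap:MFG_model:FP2} follow immediately by choosing $M$ larger than these two one-sided limits and $\epsilon$ small. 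The signs $b\ge0$ left of $\kappa^*$ and $b\le0$ right of it are inherited verbatim from Theorem \ref{th:secHJ:main}.
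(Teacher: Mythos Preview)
Your setup is correct and matches the paper's: differentiate the Hamilton--Jacobi equation, introduce $\chi(k)=c^*(u'(k))$ so that $b(k)=f(k)-\chi(k)$, and obtain the identity
\[
U'(\chi(k))\bigl(\rho-f'(k)\bigr)=b(k)\,U''(\chi(k))\,\chi'(k).
\]
The gap is in your estimate of $(b^2)'$. You assert that there exist $0<m_0<M_0$ with $m_0\le |(b^2)'(k)|\le M_0$ near $\kappa^*$, but both summands in
\[
(b^2)'(k)=2b(k)f'(k)-\frac{2(\rho-f'(k))\,u'(k)}{U''\bigl(c^*(u'(k))\bigr)}
\]
tend to $0$ as $k\to\kappa^*$: the first because $b(\kappa^*)=0$, the second because $\rho-f'(\kappa^*)=0$. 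So you cannot conclude $b(k)^2\le M_0|k-\kappa^*|$ this way (and the sign claim ``$(\rho-f')u'/U''>0$ on both sides'' is also wrong for $k>\kappa^*$). Your subsequent bootstrap attempts via Taylor expansion and via the ratio $r=b/(k-\kappa^*)$ are left as sketches and do not close the argument.

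The paper's remedy is to divide the identity above by $U''(\chi)$ and integrate from $k$ to $\kappa^*$, splitting $b=(f-f(\kappa^*))+(\chi(\kappa^*)-\chi)$ so that one piece integrates exactly to $\tfrac12(\chi(\kappa^*)-\chi(k))^2$. Using $|\rho-f'(\kappa)|\le C|\kappa^*-\kappa|$ (from $f\in C^{1,1}_{\rm loc}$) and an integration by parts for the other piece, one arrives at a \emph{quadratic inequality}
\[
X^2\le 2C_1(\kappa^*-k)^2+2X\,J(k),\qquad X=\chi(\kappa^*)-\chi(k),\quad 0\le J(k)\le M_1(\kappa^*-k),
\]
whose positive root gives $X\le C_2(\kappa^*-k)$, hence $b(k)=f(k)-f(\kappa^*)+X\le M(\kappa^*-k)$. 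Your $(b^2)'$ route can in fact be salvaged by the same device: integrating yields $b(k)^2\le 2C\int_k^{\kappa^*}|b|\,d\kappa+C(\kappa^*-k)^2$, and setting $B(k)=\sup_{[k,\kappa^*]}|b|$ produces the quadratic $B^2\le 2C(\kappa^*-k)B+C(\kappa^*-k)^2$, from which $B(k)\le C'(\kappa^*-k)$. The missing idea is precisely this quadratic-inequality step; without it the upgrade from $O(\sqrt{|k-\kappa^*|})$ to $O(|k-\kappa^*|)$ remains unproved.
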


\begin{proof}
  We focus on the proof of  \eqref{eq:chap:MFG_model:FP1}, since the proof of  \eqref{eq:chap:MFG_model:FP2} is completely similar.
\\
Since $u\in C^1(0,+\infty)$, and $u$ is  $C^2$ in  $(0,\kappa ^*)\cup (\kappa^*,+\infty)$,
it is possible to take the derivative of \eqref{eq:chap:MFG_model:secGeneralities:HJ} at $\kappa\not= \kappa^ *$:
  \begin{equation}
    \label{eq:chap:MFG_model:FP3}
    \rho u'(\kappa) -  H_k(\kappa,u'(\kappa)) = H_q(\kappa,u'(\kappa))u''(\kappa).
  \end{equation}
  Let us set 
  \begin{equation}
     \label{eq:21}
     \chi (\kappa)= c^* (u'(\kappa)).
   \end{equation}
Note that $\chi(\kappa^*)=f(\kappa^*)$.
   The function $\chi$ is positive,  continuous and increasing  in $(0,+\infty)$, and $C^1$ on $(0,\kappa^*)\cup (\kappa^*,+\infty)$.   Recall that
\begin{displaymath}
  H_k(\kappa,u'(\kappa)) = f'(\kappa)u'(\kappa) , \quad        u'(\kappa)=U'(\chi(\kappa)), \quad   \hbox{ and } \quad  H_q(\kappa,u'(\kappa)) = f(\kappa) - \chi(\kappa).
\end{displaymath}	  
 Then \eqref{eq:chap:MFG_model:FP3} can be written as follows:
 \begin{equation}
    \label{eq:chap:MFG_model:FP4}
    U'(\chi(\kappa))(\rho - f'(\kappa)) = (f(\kappa) - \chi(\kappa))U''(\chi(\kappa)) \chi'(\kappa).
  \end{equation}

  \medskip
  
    The  inequality on the left hand side of \eqref{eq:chap:MFG_model:FP1} is already known since  $f(k) - \chi(k)>0$ for $k <\kappa^*$.
     We are left with proving the other inequality for $k$ sufficiently close to $\kappa^*$. \\
     We first claim that there exist
    $\epsilon>0$ and  $C_2>0$   such that for every $k \in [\kappa^* -\epsilon, \kappa^*]$,  
    \begin{equation}
    \label{eq:15}  \chi(\kappa^*)-\chi(k)=  f(\kappa^*)-\chi(k) \leq C_2(\kappa^*-k).
  \end{equation}
  \paragraph{Proof of  \eqref{eq:15}}
   For $0< \epsilon$ small enough, dividing  \eqref{eq:chap:MFG_model:FP4}   by $U''(\chi(\kappa))$ and integrating between $ k $ and $\kappa^*$ yields
    \begin{equation}
      \label{eq:chap:MFG_model:int_euler}
      \begin{split}
       & \int_k^{\kappa^*}\frac{U'(\chi(\kappa))}{U''(\chi(\kappa))}
        (\rho - f'(\kappa))d\kappa+
        \int_k^{\kappa^*}(f(\kappa^*)-f(\kappa))\chi'(\kappa)d\kappa \\
 =& \int_k^{\kappa^*} (\chi(\kappa^*)-\chi(\kappa))\chi'(\kappa)d\kappa= \frac{1}{2}(\chi(\kappa^*)-\chi(k))^2.
      \end{split}
\end{equation}
Let us deal with the first integral in the left hand side of   \eqref{eq:chap:MFG_model:int_euler}. 
Since $f\in W^{2,\infty}_{\rm loc}$, there exists $\epsilon_0>0$ and $C_0>0$ such that for all $k\in [\kappa^*-\epsilon_0, \kappa^*]$,
\begin{displaymath}
\rho - f'(\kappa)=  f'(\kappa^*)- f'(\kappa)=\int^{\kappa^*}_{\kappa}f''(z)dz \geq -C_0(\kappa^*-\kappa),
\end{displaymath}
thus
\begin{equation}
    \label{eq:chap:MFG_model:FP5}
   \int_k^{\kappa^*}\frac{U'(\chi(\kappa))}{U''(\chi(\kappa))}(\rho - f'(\kappa))d\kappa \leq -C_0\int_k^{\kappa^*}\frac{U'(\chi(\kappa))}{U''(\chi(\kappa))}(\kappa^*-\kappa)d\kappa
\end{equation}
Since $U'(\chi(\kappa))/ U''(\chi(\kappa))$ admits a negative limit as $\kappa\to \kappa^*$, there exists $C_1>0$ such that for all
$k\in [\kappa^*-\epsilon_0, \kappa^*]$,
\begin{equation}\label{eq:100000}
	\int_k^{\kappa^*}\frac{U'(\chi(\kappa))}{U''(\chi(\kappa))}(\rho - f'(\kappa))d\kappa\leq C_1(\kappa^*-k)^2.
\end{equation}
Next, integrating by part the second integral in  \eqref{eq:chap:MFG_model:int_euler} yields
 \begin{equation}
   \label{eq:chap:MFG_model:FP6}
  \begin{split}
  \int_k^{\kappa^*}(f(\kappa^*)-f(\kappa))\chi'(\kappa)d\kappa
  &=  \int_k^{\kappa^*}f'(\kappa) (\chi(\kappa)-\chi(k))  d\kappa\\
  &= (\chi(\kappa^*)-\chi(k)) \int_k^{\kappa^*}f'(\kappa) \frac {\chi(\kappa)-\chi(k)}
{ \chi(\kappa^*)-\chi(k)}
  d\kappa.
\end{split}
\end{equation}
Setting  $J(k) = \int_k^{\kappa^*}f'(\kappa)\frac{\chi(\kappa)-\chi(k)}{\chi(\kappa^*)-\chi(k)}d\kappa$, and using that 
both $f$ and $\chi$ are increasing, we obtain 
\begin{displaymath}
  0\leq J(k)\leq f(\kappa^*)-f(k). 
\end{displaymath}
Hence, there exists  $\epsilon_1>0$ and   $M_1>0$ and such that if 
\begin{equation}
  \label{eq:chap:MFG_model:FP8}
   0\leq J(k) \le M_1 ( \kappa^*-k),  \quad\quad \hbox{ for all } k\in [  \kappa^*-\epsilon_1, \kappa^*].
\end{equation}
From \eqref{eq:chap:MFG_model:int_euler}, 
\eqref{eq:100000}
and \eqref{eq:chap:MFG_model:FP6}, one deduces
that for $\epsilon\le \min(\epsilon_0,\epsilon_1) $,
\begin{equation}
  \label{eq:chap:MFG_model:proof_lem1_1}
  (\chi(\kappa^*)-\chi(k))^2 \leq
  2C_1(\kappa^*-k)^2+ 2(\chi(\kappa^*)-\chi(k))J(k) .
\end{equation}
Elementary algebra yields that for  all $k\in [\kappa^*-\epsilon, \kappa^*]$, 
\begin{equation}
  \label{eq:chap:MFG_model:FP9}
  \begin{split}
    0\le   \chi(\kappa^*)-\chi(k) &  \le  J(k)  + \Bigl(   J^2(k)+ 2C_1 (\kappa^*-k)^2   \Bigr)^\frac{1}{2} \\
    &  \le  \Bigl(M_1  + \left(   M_1^2 +  2C_1     \Bigr)^\frac{1}{2}\right) (\kappa^*-k),
  \end{split}
\end{equation}
where the last inequality is a consequence of  \eqref{eq:chap:MFG_model:FP8}.
The bound in  \eqref{eq:15}  is proved.

\bigskip

Finally, the definition of $\kappa^*$ in  \eqref{eq:1}
implies that  $f(k)-\chi(\kappa^*)=f(k)-f(\kappa^*) = -\rho(\kappa^*-k)+o(\kappa^*-k)$. 
Therefore, from  \eqref{eq:15}, there exists  $\epsilon>0$ and  $M>0$  such that for all  $k\in [\kappa^*-\epsilon, \kappa^*]$,   
\begin{displaymath}
0\le H_q(k,u'(k)) =	f(k) - \chi(k) \leq M(\kappa^*-k),
\end{displaymath}
which achieves the proof of  \eqref{eq:chap:MFG_model:FP1}.
\end{proof}

\begin{remark}
  Note that under the additional assumption that $f$ is locally uniformly concave,
 (i.e. for every compact set $K\subset (0,+\infty)$, there exists $\theta>0$ such that $
	f''(k)\leq -\theta$ for all $k\in K$),
it can be checked with a similar argument to the one in the  proof  of Lemma  \ref{lem:secFP:1} that there exists $\epsilon >0$ and $M_1>0$
such that for every $k\in[\kappa^*-\epsilon,\kappa^*+\epsilon]$, 
\begin{equation}
	\label{eq:chap:MFG_model:OS_estimate}
	|H_q(k,u'(k))|\geq M_1|\kappa^*-k|. 
\end{equation}
Consider $k\neq \kappa^*$ such that $|k-\kappa^*|\leq \epsilon$; by differentiating \eqref{eq:chap:MFG_model:secGeneralities:HJ} at  $k$, we obtain 
\begin{displaymath}
  u''(k) = \frac{u'(k)\left(\rho - f'(k)\right)}{H_q(k,u'(k))}.
\end{displaymath}
Using  estimate \eqref{eq:chap:MFG_model:OS_estimate} and the regularity of $f$, we deduce that there exists a constant $M_2>0$ independent of $k$
taken in $[\kappa^*-\epsilon,\kappa^*+\epsilon]$  such that
\begin{displaymath}
|u''(k)| \leq M_2 u'(k).
\end{displaymath}
This shows that $u''\in L^\infty(\kappa^*-\epsilon,\kappa^*+\epsilon)$. Finally, $u\in W^{2,\infty}_{\rm loc}(0,+\infty)$.
\end{remark}

\medskip

\begin{proof}[Proof of Proposition \ref{prop:FP_1}]
For brevity, we use the notation $b(k)= H_q(k,u'(k))$. If $m$ satisfies (\ref{eq:chap:MFG_model:FP}) in the sense of distributions
 and  (\ref{eq:12}), then the  weak derivative of $bm$  is $\eta(\cdot, u(\cdot))-\nu m$,
 a bounded measure from (\ref{eq:12}) and Assumption \ref{ass:secFP:1}. Hence $bm \in {\rm BV}_{\rm loc}(0,+\infty)$. On the other hand,
 $1/b\in C^1((0,\kappa^*)\cup(\kappa^*,+\infty))$. Therefore,  the restriction of $m$ to $(0,\kappa^*)\cup(\kappa^*,+\infty)$ 
can be written $(bm)/b$ and  identified with a function in ${\rm BV}_{\rm loc}(
(0,\kappa^*)\cup(\kappa^*,+\infty))$. The Lebesgue decomposition of $m$ is  $m=m_{ac}+m_s$;
the singular part $m_s$ is supported in $\{\kappa^*\}$, hence $m_s=\lambda \delta_{\kappa*}$ with $\lambda\ge 0$;
 the regular part $m_{ac}$ can be identified with a nonnegative function in $L^1(0,+\infty)$. \\
We claim that $\lambda=0$. To prove this fact, consider a family $(\varphi_\varepsilon)_{\varepsilon>0}$
 such that
\begin{itemize}
	\item $\varphi_\varepsilon \in C^\infty_c(0,+\infty)$
	\item ${\rm supp} (\varphi_\varepsilon) \subset [\kappa^*- \varepsilon,\kappa^*+\varepsilon]$
	\item $\varphi_\varepsilon(\kappa^*)=1$
	\item $\varphi_\varepsilon$ is non decreasing on $[0,\kappa^*]$, and non increasing in $[\kappa^*,+\infty)$
	\item $\|{\varphi_\varepsilon'}\|_\infty\leq 2/\varepsilon$
\end{itemize}
We deduce from  (\ref{eq:chap:MFG_model:FP})-(\ref{eq:12}) that for $\varepsilon$ small enough,
\begin{displaymath}
  -\int_{\R_+} \varphi_\varepsilon'(k)b(k)dm(k) = 
-\nu\int_{\R_+}  \varphi_\varepsilon(k) dm(k) + \int_{\R_+}  \varphi_\varepsilon(k) \eta(k, u(k)) dk.
\end{displaymath}
For  $\varepsilon\in(0, \kappa^*/2)$, this leads to
\begin{equation*}
  \begin{split}
  &-\int_{\kappa^* - \varepsilon} ^ {\kappa^* + \varepsilon}  \varphi_\varepsilon'(k)b(k)m_{ac}(k) dk \\=&
-\nu \int_{\kappa^* - \varepsilon} ^ {\kappa^* + \varepsilon} \varphi_\varepsilon(k) m_{ac}(k) dk 
+ \int_{\kappa^* - \varepsilon} ^ {\kappa^* + \varepsilon} \varphi_\varepsilon(k) \eta(k, u(k)) dk -\nu \lambda ,    
  \end{split}
\end{equation*}
 because $b(\kappa^*)=0$. 
The construction of $ \varphi_\varepsilon$ and (\ref{eq:chap:MFG_model:FP1})-(\ref{eq:chap:MFG_model:FP2}) ensure that
\begin{displaymath}
  \sup_{k\in [\kappa^*-\varepsilon,\kappa^*+\varepsilon]}  |\varphi'_\varepsilon(k) b(k)|\leq 2M.
\end{displaymath}
This yields
\begin{displaymath}
0\le   \nu \lambda\leq 2M\int_{k^*-\varepsilon}^{k^*+\varepsilon} m_{ac}(k)dk+ 
\int_{\kappa^* - \varepsilon} ^ {\kappa^* + \varepsilon} \varphi_\varepsilon(k) \eta(k, u(k)) dk.
\end{displaymath}
Letting $\varepsilon\to 0$, we obtain that $\lambda=0$ by applying Lebesgue dominated convergence theorem. The claim is proved.
\\
Therefore, $m\in L^1(0,+\infty)$, and   (\ref{eq:chap:MFG_model:FP1}) implies that $bm\in W^{1,1}_{\rm loc}(0,+\infty)$,
 and that
  $0\le m\in L^1(0,+\infty)\cap C^1((0,\kappa^*)\cup (\kappa^*,+\infty))$.
\\
Integrating  equation \eqref{eq:chap:MFG_model:FP}  over the intervals  $(0,\kappa^*)$ and 
 $(\kappa^*,+\infty)$, we see that 
\begin{equation*}
  \begin{split}
      & b(k)m(k)=\\
    &\left\{ \begin{array}[c]{rl}
              \ds    \int_0^{k}\eta(\kappa, u(\kappa))
              \exp\left(-\ds \int_\kappa^k\frac{\nu}{    	b(z)}dz\right)d\kappa    + A \exp\left(-\ds \int_{\frac {\kappa^*} 2}^k\frac{\nu}{  b(z)}dz\right)      ,&\text{ if }  0<k <\kappa^*,\\
              \ds  -\int_k^{\infty}\eta(\kappa, u(\kappa))\exp
               \left(\ds \int_k^\kappa\frac{\nu}{ b(z)}dz\right)d\kappa +
               B
 \exp\left(\ds \int_k^{\frac {3\kappa^*} 2} \frac{\nu}{ b(z)}dz \right) , &\text{ if }    k>\kappa ^*,
            \end{array}\right.
  \end{split}
\end{equation*}
for two real numbers $A$ and $B$. But, from Lemma \ref{lem:secFP:1}, we see that a necessary condition for the integrability of $m$ is  that $A=B=0$.
Imposing $A=B=0$, we see that $m$ is a nonnegative function.
It remains to check \eqref{eq:12}.
Set $I_1=\int_0^{\kappa^*}m(k)dk$ and      $I_2=\int_{\kappa^*}^{+\infty}m(k)dk$.\\
      Focusing on $I_1$,
      \begin{equation}
          \label{eq:chap:MFG_model:FP11}
          \begin{array}[c]{rcl}        I_1 &=& \ds \int_0^{\kappa^*}\frac{1}{b(k)}
              \int_0^k\eta(\kappa, u(\kappa))\exp\left(-\int^k_\kappa\frac{\nu}{b(z)}dz\right) d\kappa dk,\\
            &=& \ds \int_0^{\kappa^*}\eta(\kappa, , u(\kappa))\int_\kappa^{\kappa^*}\frac{1}{b(k)}\exp\left(-\int^k_\kappa\frac{\nu}{b(z)}dz\right)dkd\kappa, \\
            &=&\ds \frac{1}{\nu}\int_0^{\kappa^*}\eta(\kappa,  u(\kappa)) d\kappa.            
          \end{array}
        \end{equation}
The second line in \eqref{eq:chap:MFG_model:FP11} is obtained using the non negativity of the integrand and Tonelli's theorem. 
The third line in  \eqref{eq:chap:MFG_model:FP11} comes from the fact that
$	\int_\kappa^{\kappa^*}\frac{\nu}{b(z)}dz = +\infty$,
which is a consequence of Lemma \ref{lem:secFP:1}.
\\
It can be proved in the same way that
\begin{displaymath}
  I_2 = \frac{1}{\nu}\int_{\kappa^*}^{+\infty}\eta(\kappa, u(\kappa)) d\kappa.
\end{displaymath}
Hence $\nu (I_1+I_2)= \int_{\R_+}\eta(\kappa, u(\kappa)) d\kappa  $,
and $m$ given by (\ref{eq:chap:MFG_model:FP10}) is the unique  solution of 
\eqref{eq:chap:MFG_model:FP}-\eqref{eq:12}. 
\end{proof}


 \section{Equilibrium}
 \label{sec:equilibrium}

 This paragraph is devoted to existence of equilibria.
  

 \subsection{Stability results for   \eqref{eq:chap:MFG_model:secGeneralities:HJ}-\eqref{eq:chap:MFG_model:secHJ:3}}
\label{sec:equil1}
 
\begin{lemma}
  \label{lem2}
  Under Assumptions  \ref{ass:secHJ:1} and \ref{ass:secHJ:3}, the value function $-u$ is monotone with respect to $w$, i.e. for every $w,\tilde w \in(0,+\infty)^d$,
  \begin{displaymath}
    w \le \tilde w \quad \quad \Rightarrow \quad \quad  u(\cdot,w)\geq u(\cdot,\tilde w).
  \end{displaymath}
\end{lemma}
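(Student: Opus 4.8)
The plan is to exploit the variational characterization of $u$ as the value function of the optimal control problem \eqref{eq:chap:MFG_model:secGeneralities:valueFunction}, which is available thanks to Theorem \ref{th:secHJ:main}. The key observation is that increasing a price $w_i$ can only hurt the firm: for a fixed choice of controls $c$ and $\ell$, the right-hand side of the capital dynamics \eqref{eq:chap:MFG_model:secGeneralities:capitalDynamic} is nonincreasing in each $w_i$ since it involves the term $-w\cdot\ell$ with $\ell\ge 0$. Therefore any trajectory that is admissible for the larger price vector $\tilde w$ remains admissible (in fact yields a larger capital, hence still satisfies the state constraint $k\ge 0$) when the prices are lowered to $w$, with the same running payoff $\int_0^{+\infty} U(c(t))e^{-\rho t}\,dt$. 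This inclusion of admissible trajectories, combined with the fact that the payoff does not depend on $w$ directly, gives the desired monotonicity.

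More precisely, first I would fix $w\le\tilde w$ in $(0,+\infty)^d$ and an initial capital $k_0>0$, and take an arbitrary admissible triple $(c,\ell,\tilde k)$ for the problem with prices $\tilde w$, i.e. satisfying \eqref{eq:chap:MFG_model:secGeneralities:capitalDynamic} with $w$ replaced by $\tilde w$, $\tilde k(0)=k_0$ and $\tilde k(t)\ge 0$ for all $t$. Then I would define $k$ as the solution of \eqref{eq:chap:MFG_model:secGeneralities:capitalDynamic} with prices $w$ and the \emph{same} controls $(c,\ell)$ and same initial datum $k_0$. Since $F$ is monotone (Assumption \ref{ass:secHJ:3}), the map $k\mapsto F(k,\ell(t))-w\cdot\ell(t)-\delta k-c(t)$ is, at each time, the vector field of an ODE, and the difference of the two vector fields at equal capital is $(\tilde w-w)\cdot\ell(t)\le 0$; a comparison principle for scalar ODEs (using local Lipschitz continuity of $F(\cdot,\ell)$ in $k$, which holds a.e.\ since $F$ is concave) then yields $k(t)\ge\tilde k(t)\ge 0$ for all $t\ge 0$. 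Hence $(c,\ell,k)$ is admissible for the problem with prices $w$ and achieves the same payoff, so $u(k_0,w)\ge \int_0^{+\infty}U(c(t))e^{-\rho t}\,dt$. Taking the supremum over all admissible triples for $\tilde w$ gives $u(k_0,w)\ge u(k_0,\tilde w)$, and since $k_0>0$ is arbitrary this proves the lemma.

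The main obstacle is the comparison argument for the capital trajectories: one must be careful that $F(\cdot,\ell(t))$ need not be smooth or globally Lipschitz in $k$, and that trajectories live only in $W^{1,1}_{\rm loc}$. I would handle this by noting that concavity of $F$ gives local Lipschitz continuity in $k$ on compact sets, that admissible trajectories are a priori bounded on bounded time intervals (the capital cannot grow faster than the solution of a linear ODE, using $f(k,w)\le f(k,w)$ and the structure of $f$), and then invoking a standard one-sided comparison result: if $g_1(t,k)\ge g_2(t,k)$ pointwise and $g_2$ is Lipschitz in $k$, then $k_1(0)\ge k_2(0)$ implies $k_1(t)\ge k_2(t)$. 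An alternative, fully rigorous route that avoids ODE comparison altogether is to argue directly on the Hamilton-Jacobi equation: since $-f(k,\cdot)$ is monotone in $w$ (Remark \ref{rem:2}), one has $H(k,q,w)\ge H(k,q,\tilde w)$ for all $q\ge 0$, so $u(\cdot,\tilde w)$ is a subsolution of the equation solved by $u(\cdot,w)$, and a comparison principle for the state-constrained HJ equation yields $u(\cdot,w)\ge u(\cdot,\tilde w)$; however, because of the degeneracy of $H$ as $q\to 0^+$ this comparison is delicate, so the trajectory-based proof is the cleaner choice.
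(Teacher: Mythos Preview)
Your approach is correct and the comparison argument can indeed be made rigorous with some care, but the paper's proof is shorter precisely because it sidesteps the obstacle you identified. Instead of keeping the controls $(c,\ell)$ fixed and showing that the new capital trajectory dominates the old one via an ODE comparison, the paper keeps the \emph{trajectory} fixed and absorbs the price gap into a larger consumption control. Concretely, if $(c,\ell,k)$ is admissible for $\tilde w$ then the same $k$ satisfies
\[
\frac{dk}{dt}=F(k,\ell)-w\cdot\ell-\delta k-\hat c,\qquad \hat c(t)=c(t)+(\tilde w-w)\cdot\ell(t)\ge c(t)\ge 0,
\]
so $(\hat c,\ell,k)$ is admissible for $w$ without any comparison step, and $U(\hat c)\ge U(c)$ by monotonicity of $U$. (The paper phrases this after first optimizing in $\ell$, writing the dynamics as $k'=f(k,\tilde w)-c$ and replacing $c$ by $c+f(k,w)-f(k,\tilde w)$.) Your route gives only the inequality $u(k_0,w)\ge\int U(c)e^{-\rho t}\,dt$, whereas the paper's route yields the slightly stronger $u(k_0,w)\ge\int U(\hat c)e^{-\rho t}\,dt\ge\int U(c)e^{-\rho t}\,dt$, though both suffice for the lemma. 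The practical gain of the paper's trick is that it needs no regularity of $F$ in $k$, no existence/uniqueness for the modified ODE, and no a~priori bounds on trajectories.
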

\begin{proof}
  Assume $w \le \tilde w$  and consider an admissible trajectory associated with the vector of prices $\tilde w$: it satisfies
  $\frac{dk}{dt}(t)=f(k(t),\tilde w) -c(t)$ with $k(0)=k_0$. The differential equation also reads:
$\frac{dk}{dt}(t)=f(k(t), w) -(  c(t) +f(k(t), w)- f(k(t),\tilde w))  $, and $ c(t) +f(k(t), w)- f(k(t),\tilde w)\ge c(t)\ge 0$, which can be used as a control. This yields that
\begin{displaymath}
  u(k_0,w)\ge \int_0^\infty U\left( c(t) +f(k(t), w)- f(k(t),\tilde w)\right) e^{-\rho t} dt \ge \int_0^\infty U( c(t) ) e^{-\rho t} dt .
\end{displaymath}
Taking the supremum on all admissible trajectories associated with $\tilde w$, we deduce that for all $k_0>0$, $u(k_0,w)\ge u(k_0,\tilde w)$.
  \end{proof}

\begin{lemma}
\label{lem:equilibrium:continuity_k*}
Under Assumption  \ref{ass:secHJ:3}, the map $(0,+\infty)^d   \ni w\mapsto \kappa^*(w)\in (0,+\infty)$ defined in \eqref{eq:1}
is continuous.
\end{lemma}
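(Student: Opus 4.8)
The plan is to use the characterization of $\kappa^*(w)$ by equation \eqref{eq:1}, namely $\frac{\partial f}{\partial k}(\kappa^*(w),w)=\rho$, together with the regularity and strict concavity of $f$ provided by Assumption \ref{ass:secHJ:3}. The key structural facts are: (i) for each fixed $w$, the map $k\mapsto \frac{\partial f}{\partial k}(k,w)$ is continuous on $(0,+\infty)$, strictly decreasing (because $k\mapsto f(k,w)$ is strictly concave and $C^{1,1}_{\rm loc}$), with limit $+\infty$ as $k\to 0^+$ and limit $-\delta<\rho$ as $k\to+\infty$; hence it is a continuous bijection from $(0,+\infty)$ onto $(-\delta,+\infty)$, and $\kappa^*(w)$ is its unique preimage of $\rho$; (ii) the function $f$ is jointly $C^1$ on $(0,+\infty)^{d+1}$ by point 1 of Assumption \ref{ass:secHJ:3}, so in particular $(k,w)\mapsto \frac{\partial f}{\partial k}(k,w)$ is continuous on $(0,+\infty)^{d+1}$.

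First I would fix $w_0\in(0,+\infty)^d$ and a sequence $w_n\to w_0$. Set $k_n=\kappa^*(w_n)$. The strategy is to show $k_n\to \kappa^*(w_0)$ by a compactness-plus-uniqueness argument. The first sub-step is to establish that the sequence $(k_n)$ is bounded and bounded away from $0$. For this I would argue by contradiction: if a subsequence $k_n\to 0$, then using joint continuity of $\partial_k f$ one would need $\partial_k f(k_n,w_n)\to\rho$ finite, but one can pin down the blow-up $\partial_k f(k,w)\to+\infty$ as $k\to0$ locally uniformly in $w$ near $w_0$ (this is the point that needs a little care — see below), giving a contradiction. Similarly, if $k_n\to+\infty$, then $\partial_k f(k_n,w_n)$ should approach $-\delta$ (again locally uniformly in $w$), contradicting $\partial_k f(k_n,w_n)=\rho>-\delta$. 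Once $(k_n)$ lies in a compact subinterval $[a,b]\subset(0,+\infty)$, extract any convergent subsequence $k_{n_j}\to \bar k\in[a,b]$; by joint continuity, $\rho=\partial_k f(k_{n_j},w_{n_j})\to \partial_k f(\bar k,w_0)$, so $\partial_k f(\bar k,w_0)=\rho$, whence $\bar k=\kappa^*(w_0)$ by uniqueness. Since every subsequence has a further subsequence converging to the same limit $\kappa^*(w_0)$, the whole sequence converges, and continuity follows.

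The main obstacle is making the ``locally uniform in $w$'' control of the limits $\partial_k f(k,w)\to+\infty$ as $k\to 0^+$ and $\partial_k f(k,w)\to -\delta$ as $k\to+\infty$ rigorous, since Assumption \ref{ass:secHJ:3} only states these limits pointwise in $w$. For the blow-up at $0$, one cleaner route avoids this entirely: instead of passing to the limit at the boundary, use monotonicity of $\partial_k f(\cdot,w)$ directly. Indeed, to rule out $k_{n_j}\to 0$, it suffices to find a single $a>0$ with $\partial_k f(a,w_0)>\rho$; by joint continuity on the compact set $\{a\}\times \overline{B(w_0,r)}$, we get $\partial_k f(a,w)>\rho$ for all $w$ near $w_0$, hence by strict monotonicity $\kappa^*(w)>a$ for all such $w$, i.e. $k_n$ is eventually bounded below by $a$. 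Symmetrically, choosing $b$ with $\partial_k f(b,w_0)<\rho$ gives $\kappa^*(w)<b$ for $w$ near $w_0$. Such $a,b$ exist precisely because $\partial_k f(\cdot,w_0)$ is a continuous strictly decreasing surjection onto $(-\delta,+\infty)\ni\rho$. This reduces the whole argument to joint continuity of $\partial_k f$ on $(0,+\infty)^{d+1}$ plus the one-variable structure already recorded in Remark \ref{rem:1}, and the compactness argument above then finishes the proof.
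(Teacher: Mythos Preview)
Your proof is correct and follows essentially the same strategy as the paper: sequential compactness of $\kappa^*(w_n)$ combined with uniqueness of the root of $\partial_k f(\cdot,w_0)=\rho$. The only cosmetic difference is in how the a priori bounds are obtained: the paper argues by contradiction (if $\kappa^*(w_n)\to+\infty$, then $\partial_k f(k,w_n)>\rho$ eventually for each fixed $k$, and passing to the limit contradicts $\lim_{k\to\infty}\partial_k f(k,w)=-\delta$; similarly at $0$), whereas you pick explicit barriers $a,b$ with $\partial_k f(a,w_0)>\rho>\partial_k f(b,w_0)$ and propagate these inequalities to nearby $w$ by joint continuity---your formulation is slightly more direct but relies on exactly the same ingredients (joint $C^1$ regularity of $f$, strict monotonicity of $\partial_k f(\cdot,w)$, and the endpoint limits).
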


\begin{proof}
  Consider a sequence $(w_n)_{n\in \N}$, $w_n\in (0,+\infty)^d$, such that $w_n$ tends to $w\in (0,+\infty)^d$ as  $n\to +\infty$.\\
  We first claim that $\kappa^*(w_n)$ remains in a compact subset of $(0,+\infty)$. We proceed by contradiction:
  \begin{itemize}
  \item   Assume first that  up to the extraction of a subsequence,
  $\kappa^*(w_n)$ tends to $+\infty$ as $n\to +\infty$. Hence, for any $k>0$, there exists $N>0$ such that if $n\ge N$, then
  $\frac {\partial f}{\partial k} (k, w_n)> \rho$. Passing to the limit using the $C^1$ regularity of $f$ (see Assumption  \ref{ass:secHJ:3}),
  we get that  $\frac {\partial f}{\partial k} (k, w)\ge  \rho$ for all $k>0$. But $k\mapsto f(k,w)$ is strictly concave: Hence, $\frac {\partial f}{\partial k} (k, w)> \rho$ for all $k>0$.
  This contradicts  point  2.iii in  Assumption  \ref{ass:secHJ:3} (see also Remark \ref{rem:1}).
    \item  Assume that up to the extraction of a subsequence, $\kappa^*(w_n)$ tends to $0$ as $n\to +\infty$: arguing as above, this implies that
      $\frac {\partial f}{\partial k} (k, w)< \rho$ for all $k>0$.  This contradicts  point  2.ii in  Assumption  \ref{ass:secHJ:3}.
  \end{itemize}
  The claim is proved.\\
  Possibly after the extraction of a subsequence,  $\kappa^*(w_n)$ tends to a positive limit $\tilde \kappa$. It is easy to deduce from Assumption  \ref{ass:secHJ:3}
  that $\frac {\partial f}{\partial k} (\tilde \kappa, w)=\rho$. Therefore $\tilde \kappa=\kappa^* (w)$, and the uniqueness of the cluster point implies that the whole sequence
  $\kappa^*(w_n)$ tends to $\kappa^*(w)$. This achieves the proof.
\end{proof}

\begin{lemma}[Continuity of $w\mapsto u(\cdot,w)$]
  \label{lem:equilibrium:stability}
  Let  $(w_n)_{n\in\N}$,  $w_n\in (0,+\infty)^d$,  be a sequence converging to $w\in(0,+\infty)^d$ as $n\to \infty$.
  Then, under Assumptions  \ref{ass:secHJ:1} and \ref{ass:secHJ:3}, 
\begin{displaymath}
  u(\cdot,w_n)\rightarrow u(\cdot, w)
\end{displaymath}
in $C^1(K)$ for every compact subset $K$ of  $(0,+\infty)$.
\end{lemma}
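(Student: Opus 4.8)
The plan is to argue by compactness and then identify the limit via the uniqueness statement of Theorem~\ref{th:secHJ:main}. Since $w_n\to w$, the set $\{w_n:n\in\N\}\cup\{w\}$ is contained in a box $[\underline w,\overline w]\subset(0,+\infty)^d$; by Lemma~\ref{lem2} one has $u(\cdot,\overline w)\le u(\cdot,w_n)\le u(\cdot,\underline w)$, so $u(\cdot,w_n)$ is bounded, uniformly in $n$, on every compact subset of $(0,+\infty)$. Each $u(\cdot,w_n)$ being concave (Theorem~\ref{th:secHJ:main}), the derivatives $u'(\cdot,w_n)$ are then also locally bounded uniformly in $n$, and $\kappa^*(w_n)\to\kappa^*(w)$ by Lemma~\ref{lem:equilibrium:continuity_k*}. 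By Arzel\`a--Ascoli we may extract a subsequence along which $u(\cdot,w_n)\to\tilde u$ locally uniformly on $(0,+\infty)$, with $\tilde u$ concave; by the standard stability of derivatives of concave functions (or Helly's theorem) we may also assume $u'(\cdot,w_n)\to v$ a.e., where $v$ is nonincreasing, $v=\tilde u'$ a.e.\ and in the sense of distributions.

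First I would show $v>0$ everywhere. At any $k$ where $u'(k,w_n)\to v(k)$, passing to the limit in $\rho u(k,w_n)=H(k,u'(k,w_n),w_n)$ gives $\rho\tilde u(k)=H(k,v(k),w)$ if $v(k)>0$, while if $v(k)=0$ it gives $\rho\tilde u(k)=\lim_{q\to0^+}H(k,q,w)=\lim_{c\to+\infty}U(c)$ (using joint continuity of $H$ where $q>0$ and the blow-up of $H$ at $q=0^+$); but the monotonicity of $u(\cdot,\underline w)$ together with the HJ equation shows $\rho u(\cdot,\underline w)<\lim_{c\to+\infty}U(c)$, hence $\rho\tilde u\le\rho u(\cdot,\underline w)<\lim_{c\to+\infty}U(c)$, a contradiction. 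Since $v$ is nonincreasing and positive a.e., it is positive everywhere. Consequently $\rho\tilde u(k)=H(k,v(k),w)$ for a.e.\ $k$, and passing to the limit in the sign conditions \eqref{eq:chap:MFG_model:secHJ:2}--\eqref{eq:chap:MFG_model:secHJ:3} for $w_n$ — i.e.\ in $H_q(k,u'(k,w_n),w_n)=f(k,w_n)-c^*(u'(k,w_n))$ — yields $H_q(k,v(k),w)\ge0$ for a.e.\ $k<\kappa^*(w)$ and $\le0$ for a.e.\ $k>\kappa^*(w)$.

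Next I would upgrade $\tilde u$ to the classical solution at $w$. Away from $\kappa^*(w)$ the strict convexity of $q\mapsto H(k,q,w)$ (Lemma~\ref{lem:secHJ:Hconvex}) forces the one-sided limits of $v$ to agree (two points on the same strictly monotone branch cannot give the same value of $H$), so $v=\tilde u'$ is continuous on $(0,\kappa^*(w))\cup(\kappa^*(w),+\infty)$; bootstrapping in the equation makes $\tilde u$ there $C^1$, indeed $C^2$. At $\kappa^*(w)$ one uses that, by Lemma~\ref{lem:equilibrium:continuity_k*} and continuity of $f$,
\[
\rho\tilde u(\kappa^*(w))=\lim_n\rho u(\kappa^*(w_n),w_n)=\lim_n U\bigl(f(\kappa^*(w_n),w_n)\bigr)=U\bigl(f(\kappa^*(w),w)\bigr)=\min_{q>0}H(\kappa^*(w),q,w),
\]
so both one-sided limits of $v$ at $\kappa^*(w)$ must equal the unique minimizer $U'(f(\kappa^*(w),w))$ (Lemma~\ref{lem:secHJ:Hmin} and its analogue when $\delta>0$); hence $\tilde u\in C^1(0,+\infty)$. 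Differentiating $\rho\tilde u=H(\cdot,\tilde u',w)$ near a point $k\ne\kappa^*(w)$ shows $H_q(k,\tilde u'(k),w)=0$ would force $f'(k,w)=\rho$, i.e.\ $k=\kappa^*(w)$; combined with continuity and the a.e.\ signs above, \eqref{eq:chap:MFG_model:secHJ:2}--\eqref{eq:chap:MFG_model:secHJ:3} then hold with strict inequalities. Thus $\tilde u$ satisfies all the conditions of Theorem~\ref{th:secHJ:main} for the price $w$, and by the uniqueness part of that theorem $\tilde u=u(\cdot,w)$. Since every subsequential limit is $u(\cdot,w)$, the full sequence $u(\cdot,w_n)$ converges to $u(\cdot,w)$ locally uniformly; and as $u'(\cdot,w_n)$ is a sequence of monotone functions converging pointwise to the continuous function $u'(\cdot,w)$, this convergence is automatically locally uniform (P\'olya), giving convergence in $C^1(K)$ for every compact $K\subset(0,+\infty)$.

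I expect the main obstacle to be the behaviour at $\kappa^*(w)$: this is the point where the Hamiltonian degenerates and the constructions behind Theorem~\ref{th:secHJ:main} are singular, it moves with $w$, and it lies on the boundary of the regions where $H$ is invertible, so the behaviour of the limit must be pinned down there by hand through the explicit value $\rho u(\kappa^*(w),w)=U(f(\kappa^*(w),w))$. A secondary technical point is the a priori exclusion of $v\equiv0$ on a half-line, which (as above) rests on the uniform two-sided bound coming from Lemma~\ref{lem2}.
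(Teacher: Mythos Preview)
Your approach is essentially correct but follows a genuinely different route from the paper. The paper, after obtaining the same two-sided bound $u(\cdot,\overline w)\le u(\cdot,w_n)\le u(\cdot,\underline w)$ and the local uniform bounds on $\partial_k u$, identifies the limit by invoking the \emph{viscosity solution machinery}: it picks an interval $[a,b]$ with $a<\min_n\kappa^*(w_n)$ and $b>\max_n\kappa^*(w_n)$, observes that each $u(\cdot,w_n)$ is a state-constrained viscosity solution on $(a,b)$, and then appeals to standard stability and uniqueness results for state-constrained Hamilton--Jacobi equations to conclude $v=u(\cdot,w)$. You instead bypass this machinery and identify the limit by directly verifying the full characterization of Theorem~\ref{th:secHJ:main}: positivity of $v$, the pointwise HJ equation, $C^1$ regularity across $\kappa^*(w)$ via the explicit value $\rho\tilde u(\kappa^*(w))=U(f(\kappa^*(w),w))$, and the sign conditions. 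This is more self-contained (no appeal to viscosity theory) but requires more handwork at $\kappa^*(w)$; the paper's route is shorter once one accepts the cited references. Both end with the same Dini/P\'olya argument for upgrading to local $C^1$ convergence.

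One small gap: your argument for the \emph{strict} inequalities in \eqref{eq:chap:MFG_model:secHJ:2}--\eqref{eq:chap:MFG_model:secHJ:3} is slightly circular. You write ``bootstrapping in the equation makes $\tilde u$ \dots\ $C^2$'', then differentiate to deduce that $H_q(k,\tilde u'(k),w)=0$ forces $f'(k,w)=\rho$. But the bootstrap to $C^2$ (via $\tilde u'=\cF^{\uparrow}(\cdot,\tilde u)$ or $\cF^{\downarrow}$, which are $C^1$ only on the \emph{open} sets $\Omega^{\uparrow},\Omega^{\downarrow}$) already presupposes the strict inequality $\rho\tilde u(k)>U(f(k,w))$, i.e.\ $H_q\neq0$. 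A clean fix that avoids differentiation: suppose $H_q(k_0,\tilde u'(k_0),w)=0$ for some $k_0<\kappa^*(w)$, so $\rho\tilde u(k_0)=U(f(k_0,w))$. Setting $\psi(k)=\tilde u(k)-U(f(k,w))/\rho$, one has $\psi\ge0$ on $[k_0,\kappa^*(w)]$ with $\psi(k_0)=0$, while
\[
\psi'(k_0)=\tilde u'(k_0)-\tfrac{1}{\rho}U'(f(k_0,w))\,\tfrac{\partial f}{\partial k}(k_0,w)
=U'(f(k_0,w))\,\tfrac{\rho-\partial_k f(k_0,w)}{\rho}<0,
\]
since $\partial_k f(k_0,w)>\rho$ for $k_0<\kappa^*(w)$; this contradicts $\psi\ge0$. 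The case $k_0>\kappa^*(w)$ is symmetric. With this in place, your identification $\tilde u=u(\cdot,w)$ via Theorem~\ref{th:secHJ:main} goes through.
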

\begin{proof}
  We may assume without loss of generality that  there exist  two vectors  $\underline{w},\overline{w}\in (0,+\infty)^d$ such that, for all $n\ge 0$,
  \begin{displaymath}
    \underline{w} \le w_n \le \overline{w}.
  \end{displaymath}
 From Lemma \ref{lem2}, the following inequalities hold  for all $n\ge 0$:
 \begin{displaymath}
   u(\cdot,\overline{w})\leq u(\cdot,w_n)\leq u(\cdot,\underline{w}).
 \end{displaymath}
 Using 	\eqref{eq:chap:MFG_model:secGeneralities:HJ} and the  coercivity of $q\mapsto H(k,q,w_n)$  uniform w.r.t. $n$ and 
 $k\in K$, where $K$ is a compact subset of   $ (0,+\infty)$,
 we see that $\frac{\partial u}{\partial k}(k,w_n)$ is bounded uniformly in $n$ and $k\in K$.  Moreover, if
 $\lim_{c\to +\infty} U(c)=+\infty$, then  $\frac{\partial u}{\partial k}(\cdot,w_n)$ is bounded uniformly away from $0$  w.r.t. $n$ and $k\in K$.
\\
 Since $(u(\cdot,w_n))_{n\in\N}$ is a sequence of concave functions uniformly bounded on every compact subset of $(0,+\infty)$,
 there exists  a continuous and concave function  $v:(0,+\infty)\rightarrow \R$ such that, after the extraction of a subsequence,
 \begin{itemize}
 \item $u(\cdot,w_n)\rightarrow v$  locally uniformly in $(0,+\infty)$
 \item $\frac{\partial u}{\partial k}(\cdot,w_n)\rightarrow v'$ almost everywhere  in $(0,+\infty)$.
 \end{itemize}
On the other hand,  from Lemma \ref{lem:equilibrium:continuity_k*}, there exist $\underline \kappa>0$ and $\overline \kappa> \underline \kappa$ such that
 \begin{displaymath}
   \underline \kappa <\min_{\underline{w}\leq w \leq \overline{w}}\kappa ^*(w)\leq \max_{\underline{w}\leq w \leq
     \overline{w}}\kappa^*(w)< \overline \kappa.
 \end{displaymath}
 Take any compact interval $[a,b]$ such that $0<a< \underline \kappa $ and $ \overline \kappa<b$.

 The functions $u(\cdot,w_n)$ are uniformly Lipschitz viscosity solutions (with $\frac{\partial u}{\partial k}(\cdot,w_n)$ bounded away from $0$ if  $\lim_{c\to +\infty} U(c)=+\infty$) of 
\eqref{eq:chap:MFG_model:secGeneralities:HJ} (with $w=w_n$) on $( a , b)$ with state constrained boundary conditions at $ a $ and $b$.
From the continuity of $H$ on $[ a ,b]\times(0,+\infty)^d\times (0,+\infty)^d$, the uniform bounds  on  $\frac{\partial u}{\partial k}(\cdot,w_n)$  stated above and
the uniform convergence of $(u(\cdot,w_n))_{n\in\N}$ towards $v$ on $[ a ,b ]$,
stability results on viscosity solutions, see e.g. \cite{MR1484411} can be used and yield that 
$v$ is a viscosity solution of
\begin{displaymath}
	\rho v(k) = H(k,v'(k), w),
\end{displaymath}
on $( a,b)$, with state constrained boundary conditions at  $k= a$ and $k=b$.
Note that the eventuality that $H(k,q,w)\to+\infty$ as $q\to 0$ does not imply any difficulty, because in this case,  $\frac{\partial u}{\partial k}(\cdot,w_n)$ is  uniformly bounded away from $0$. From this observation, we can also use well-known results on the uniqueness of state constrained solutions of the Hamilton-Jacobi equation, see e.g. \cite{MR1484411}, and find that $v=u(\cdot,w)$. 

In fact, the convergence holds locally in $C^1$. We know that 
\begin{itemize}
\item $u(\cdot,w_n)$ tends to $ u(\cdot,w)$ uniformly in $[a,b]$
\item there exists a measurable subset $E$ of $[a,b]$, such that the Lebesgue measure of $[a,b] \setminus E$ is zero and that
  $\frac{\partial u}{\partial k}	(\cdot,w_n)$ tends to $ \frac{\partial u}{\partial k} (\cdot,w)$ pointwise in $E$.
\end{itemize}
Note that after slightly modifying  $a$ or $b$ if necessary, we can always assume that   $a\in E$ and $b\in E$. A variant of Dini's first theorem  yields that the  convergence
of $ \frac{\partial u}{\partial k}	(\cdot,w_n)$ is in fact uniform in $[a,b]$: for completeness, the proof is given in what follows.\\
The function $  \frac{\partial u}{\partial k}(\cdot,w)$ is continuous, thus uniformly continuous  on $[a,b]$; hence, given $\epsilon>0$,
it is possible to choose $\delta>0$ small enough such that 
\begin{displaymath}
  |k-k'|\le \delta\quad  \Rightarrow \quad \left| \frac{\partial u}{\partial k}(k,w)- \frac{\partial u}{\partial k}(k',w)\right|<\frac{\varepsilon}{2},\quad\quad \forall k,k'\in[a,b].
\end{displaymath}
For such a choice of $\delta>0$, it is possible to define a finite
subdivision $(\sigma_i)_{i\in\{0,\dots,I\}}$ of $[a,b]$ such that
\begin{itemize}
	\item for every $i\in \{ 0,\dots,I \}$, $\sigma_i\in E$.
	\item           for any $i\in\{0,\dots,I-1\}$, $0<\sigma_{i+1}-\sigma_i<\delta$.
\end{itemize}
On the other hand, for any $k\in [a,b]$, there exists $i_0\in\{0,\dots I-1\}$ such that $\sigma_{i_0}\leq k\leq \sigma_{i_0+1}$. Then the concavity of $u$ with respect to $k$ yields
\begin{align*}
  \frac{\partial u}{\partial k}(k,w_n) - \frac{\partial u}{\partial k}(k,w)&\leq \frac{\partial u}{\partial k}(\sigma_{i_0},w_n) - \frac{\partial u}{\partial k}(\sigma_{i_0+1},w)\\
                         &=\frac{\partial u}{\partial k}(\sigma_{i_0},w_n) - \frac{\partial u}{\partial k}(\sigma_{i_0},w) + \frac{\partial u}{\partial k}(\sigma_{i_0},w) - \frac{\partial u}{\partial k}(\sigma_{i_0+1},w).
\end{align*}
Taking $N\in \N$ large enough such that for every $n\geq N$, 
\begin{displaymath}
  \max_{0\le i\le I }\left| \frac{\partial u}{\partial k}(\sigma_{i},w_n) - \frac{\partial u}{\partial k}u(\sigma_{i},w)\right| <\frac{\varepsilon}{2}
\end{displaymath}
yields that
\begin{displaymath}
  \frac{\partial u}{\partial k}(k,w_n) - \frac{\partial u}{\partial k}u(k,w) < \varepsilon,\quad \forall n\geq N.
\end{displaymath}
A similar argument can be used to bound $  \frac{\partial u}{\partial k}(k,w_n) -  \frac{\partial u}{\partial k}(k,w)$ from below. Finally, for any $ \varepsilon>0$ there exists
$N>0$ such that 
\begin{displaymath}
   \sup_{k\in[a,b]}\left| \frac{\partial u}{\partial k}(k,w_n) -  \frac{\partial u}{\partial k}(k,w)\right|< \varepsilon,\quad\quad  \forall n\geq N.
 \end{displaymath}
 This achieves the proof.

\end{proof}

\subsection{Existence of equilibria}\label{sec:equil2}

\begin{proof} [Proof of Theorem \ref{th_ex_equil}]
Recall that $\Phi$ and   $g$ are  respectively defined in Assumption  \ref{ass:chap:MFG_model:S} and  formula \eqref{eq:supply0}. Let $\epsilon$ be the constant appearing in Assumption
\ref{ass:chap:MFG_model:technical_assump}. There exist two constants $0<\underline{\kappa}\le \overline \kappa<+\infty$ such that for all $w \in [\epsilon, 1/\epsilon]^d$, $\underline \kappa < \kappa^* (w)< \overline \kappa$. Hence,
$m(\cdot, w)$ is supported in the compact inteval $ J= \hbox{conv} \left([
  \underline \kappa, \overline \kappa]\cap \hbox{support}(\hat \eta) \right)$.

We claim that the map $w\mapsto m(\cdot, w)$ is continuous from  $[\epsilon, 1/\epsilon]^d$ to the set of probability measures supported in $J$.
Indeed, let $(w_n)_{n\in\N}$,  $w_n\in   [\epsilon, 1/\epsilon]^d   $, be a sequence  converging to $w$ as $n\to +\infty$.  From Lemma  \ref{lem:equilibrium:stability},  $	u(\cdot,w_n)\to u(\cdot, w)$ in $C^1(K)$  for any compact subset $K$ of $(0,+\infty)$. The probability measures $m(\cdot, w_n) $ are all supported in $J$.
 Hence, the sequence $m(\cdot, w_n) $  has a cluster point $\mu$ in the weak $*$ topology. Let us prove that $\mu=m(\cdot, w)$: for any test function $\phi(\cdot)\in C^\infty_c(0,+\infty)$,
\begin{displaymath}
	-\int_0^{+\infty}\phi'(k)b(k,w_n)m(k,w_n)dk = \int_0^{+\infty}\phi(k)\eta(k)dk - \nu \int_0^{+\infty}\varphi(k)m(k,w_n)dk.
      \end{displaymath}
      where $b$ is given by \eqref{eq:b}.\\
  The right-hand side converges to $\ds
	\int_0^{+\infty}\phi(k)\eta(k)dk -\nu \int_0^{+\infty}\phi(k)\mu(k)dk$.
        On the other hand, the $C^1$ convergence of $u(\cdot,w_n)$ to $u(\cdot,w)$ on every compact subset of $(0,+\infty)$ implies the uniform convergence of $	H_q(\cdot,\frac{\partial u}{\partial k}\left(\cdot,w_n),w_n\right)$ to $H_q\left(\cdot,\frac{\partial u}{\partial k}(\cdot,w),w\right)$ in $J$.
        We deduce that
\begin{displaymath}
	\int_0^{\infty}\phi'(k)H_q\left(k,\frac{\partial u}{\partial k}(k,w_n),w_n\right)m(k,w_n)dk \to \int_0^{\infty}\phi'(k)H_q\left(k,\frac{\partial u}{\partial k}(k,w),w\right)\mu(k)dk.
      \end{displaymath}
      Therefore $\mu = m(\cdot,w)$ and the whole sequence $ m(\cdot,w_n)$ weakly $*$ converges to $m(\cdot,w)$ as $n\to \infty$. The map $w\mapsto m(\cdot,w)$ is continuous on  $[\epsilon, 1/\epsilon]^d$.

\medskip

For $\lambda\in [0,1]$, we then consider the map $T_\lambda$ defined on $[\epsilon, 1/\epsilon]^d$ by
\begin{equation}
  \label{eq:10001}
T_\lambda(w)=\argmin\left\{ \Phi(\cdot)  + \int_0^\infty g(k, \cdot)   \Bigl( (1-\lambda) d\hat \eta (k) + \lambda dm(k,w) \Bigr) \right\},
\end{equation}
where the  function $g$ has been defined in \eqref{eq:supply0}, (recall that $k\mapsto g(k)$ is convex).
From the observation made above on $m(\cdot, w)$ and from Assumption  \ref{ass:chap:MFG_model:S},
the function to be minimized is continuous, strictly convex  and coercive on $[0,+\infty)^d$; hence $T_\lambda(w)$ is well defined. Moreover, $\|T_\lambda(w)\|_\infty$ is bounded uniformly in $w\in  [\epsilon, 1/\epsilon]^d$.
\\
Let $w_n$ and $\lambda_n$ be two sequences taking their values respectively in $ [\epsilon, 1/\epsilon]^d$ and
in $[0,1]$; assume that $w_n$ tends to $w$ and that $\lambda_n$ tends to $\lambda$.  The sequence $T_{\lambda_n } (w_n)$ takes its values in a compact; hence, up to the extraction of a subsequence, we may assume that $T_{\lambda_n } (w_n)$ converges to some $\tilde w$. Since
$m(\cdot, w_n)$ weakly $*$ converges to $m(\cdot, w)$, it is easy to check that $\tilde w=T_\lambda(w)$ and that the whole sequence $ T_{\lambda_n } (w_n)$ converges. Hence, the map $(\lambda, w)\mapsto T_\lambda(w)$ is continuous.

\medskip

For $\lambda\in [0,1]$, we consider the equation: find $w\in [\epsilon, 1/\epsilon]^d$ such that   $w-T_\lambda(w)=0$, which we write $\chi(w,\lambda)=0$.  We now aim at applying Brouwer degree theory to $\chi$.  

First, setting  $t_0= \argmin\left\{ \Phi(\cdot)  + \int_0^\infty g(k, \cdot)d\hat \eta (k)\right\}$ which  does not depend on $w$,  the equation $\chi(w,0)=0$  writes $w=t_0\in (\epsilon, 1/\epsilon)^d$. Therefore,
  \begin{equation}
    \label{eq:10002}
  \hbox{deg}\left(\chi(\cdot, 0), (\epsilon, 1/\epsilon)^d, 0_{\R^d}\right)=1.     
  \end{equation}

  Second,  for all $\lambda \in [0,1]$, we know from Assumption  \ref{ass:chap:MFG_model:technical_assump} that  the equation  $w-T_\lambda(w)=0$ has no solution on the boundary of  $[\epsilon, 1/\epsilon]^d$.

  From the two observations above, we see that for all $\lambda\in [0,1]$,
   \begin{equation}
    \label{eq:10003}
  \hbox{deg}\left(\chi(\cdot, \lambda), (\epsilon, 1/\epsilon)^d, 0_{\R^d}\right)=1.     
  \end{equation}
  
  We deduce that there exists $w^*\in  (\epsilon, 1/\epsilon)^d$ such that
  \begin{displaymath}
   w^*= \argmin\left\{ \Phi(\cdot)  + \int_0^\infty g(k, \cdot)   dm(k,w^*) \right\}.
 \end{displaymath}
 Writing the  first order necessary optimality conditions associated with this minimization problem, we see that $w^*$ satisfies \eqref{eq:clearing_condition}.
\end{proof}

\begin{remark}
  We have actually proved more than the existence of an equilibrium, namely that  $\hbox{deg}(\chi, (\epsilon, 1/\epsilon)^d, 0)=1$.
\end{remark}
\subsection{Assumption   \ref{ass:chap:MFG_model:technical_assump} holds in the examples of Subsection \ref{sec:import-exampl-util}}
\label{sec:assumption_holds}

\subsubsection{The  Cobb-Douglas  production function} \label{sec:assump_cobb_douglas}
\begin{proposition}
  Assumption   \ref{ass:chap:MFG_model:technical_assump} holds with
  the Cobb-Douglas production function described in Subsection \ref{sec:import-exampl-util} .
\end{proposition}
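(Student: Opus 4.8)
The plan is to use the explicit Cobb--Douglas formulas to reduce \eqref{eq:10004} to a single scalar inequality, and then play the coercivity of $\Phi$ against the $w$-independent a priori bounds on $m(\cdot,w)$ supplied by Proposition \ref{prop:FP_1} and Remark \ref{sec:rk_support}. First, from \eqref{eq:chap:examples:net_output_CobbDouglas} and \eqref{eq:supply0} one has $g(k,w)=C(w)\,k^{\theta}$ with $\theta=\alpha/(1-|\beta|)\in(0,1)$, $C(w)=(1-|\beta|)\bigl(A\,P(w)\bigr)^{1/(1-|\beta|)}$ and $P(w)=\prod_{i}(\beta_i/w_i)^{\beta_i}$; both $w\mapsto C(w)$ and $w\mapsto\kappa^*(w)$ (see \eqref{eq:chap:examples:sec:Cobb_Douglas:target_capital}) are increasing functions of $P(w)$ which blow up as any $w_i\to 0^+$ and vanish as any $w_i\to+\infty$. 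Writing $\mu_\lambda=(1-\lambda)\hat\eta+\lambda\,m(\cdot,w)$ and $s=s(w,\lambda)=\int_0^\infty k^{\theta}\,d\mu_\lambda(k)\ge 0$, condition \eqref{eq:10004} becomes
\[
  \Phi(w)-\Phi(\one)\;\le\;\bigl(C(\one)-C(w)\bigr)\,s(w,\lambda).
\]
By Remark \ref{sec:rk_support}, $\mu_\lambda$ has total mass in $\bigl[\min(1,1/(\nu\hat c)),\max(1,\hat c/\nu)\bigr]$ and is supported in $[\min(a_1,\kappa^*(w)),\max(a_2,\kappa^*(w))]$, where $a_1=\min\mathrm{supp}(\hat\eta)>0$, $a_2=\max\mathrm{supp}(\hat\eta)$ (Assumption \ref{ass:secFP:1}).

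For the upper bound on $w$ I would distinguish two cases. If $P(w)\ge P(\one)$, then $C(w)\ge C(\one)$, the right-hand side above is nonpositive, hence $\Phi(w)\le\Phi(\one)$; coercivity of $\Phi$ (Assumption \ref{ass:chap:MFG_model:S}) confines $w$ to a fixed bounded set. If $P(w)<P(\one)$, then $\kappa^*(w)<\kappa^*(\one)$, so $\mu_\lambda$ is supported in $[0,\bar a_0]$ with $\bar a_0:=\max(a_2,\kappa^*(\one))$, whence $s(w,\lambda)\le\bar a_0^{\theta}\max(1,\hat c/\nu)=:s_0$; since $C(w)\,s\ge0$, \eqref{eq:10004} yields $\Phi(w)\le\Phi(\one)+C(\one)s_0$, and coercivity again bounds $w$. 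In both cases $\|w\|_\infty\le R$ with $R$ independent of $\lambda$.

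For the lower bound, assume $\|w\|_\infty\le R$ and suppose $P(w)$ is large. From $P(w)\ge(\beta_i/w_i)^{\beta_i}\prod_{j\ne i}(\beta_j/R)^{\beta_j}$ and the monotonicity of $\kappa^*$ in $P$, large $P(w)$ forces $\kappa^*(w)\ge a_2$; then $m(\cdot,w)$ is supported in $[a_1,\kappa^*(w)]$, so by the mass bound $\int k^{\theta}\,dm(\cdot,w)\ge a_1^{\theta}/(\nu\hat c)$, and together with $\int k^{\theta}\,d\hat\eta\ge a_1^{\theta}$ this gives $s(w,\lambda)\ge\underline s:=a_1^{\theta}/\max(1,\nu\hat c)>0$, uniformly in $\lambda$. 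Since moreover $C(w)>C(\one)$ and $\Phi$ is bounded below, \eqref{eq:10004} forces $(C(w)-C(\one))\,\underline s\le\Phi(\one)-\inf\Phi$, so $C(w)$, and hence $P(w)$, is bounded by a fixed constant; using once more $P(w)\ge(\beta_i/w_i)^{\beta_i}\prod_{j\ne i}(\beta_j/R)^{\beta_j}$ this bounds each $w_i$ below by a positive constant independent of $\lambda$. Taking $\epsilon$ below $1$, below $1/R$, and below each of these lower bounds gives \eqref{eq:10012}, so Assumption \ref{ass:chap:MFG_model:technical_assump} holds.

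The only genuine difficulty, beyond bookkeeping, is to make sure every constant produced is uniform in $\lambda\in[0,1]$ and in $w$ in the relevant range; this is exactly where the $w$-independent mass and support estimates for $m(\cdot,w)$ from Proposition \ref{prop:FP_1} and Remark \ref{sec:rk_support} are indispensable, since without them the weight $s(w,\lambda)=\int k^{\theta}\,d\mu_\lambda$ multiplying $C(w)$ in \eqref{eq:10004} could not be controlled. I expect the case split $P(w)\ge P(\one)$ versus $P(w)<P(\one)$, and the implication ``$P(w)$ large $\Rightarrow\kappa^*(w)\ge a_2$'' relying only on the already-established bounds $w_j\le R$, to be the points demanding the most care.
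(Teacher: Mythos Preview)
Your proof is correct and rests on the same three ingredients as the paper's: the product factorization $g(k,w)=C(w)\,k^{\theta}$, the uniform support and mass bounds on $m(\cdot,w)$ from Remark~\ref{sec:rk_support}, and the coercivity of $\Phi$. The organization is different, though. The paper first bounds $G_\beta(w)=\prod_i w_i^{-\beta_i/(1-|\beta|)}$ by combining $G_\beta(w)\le 1+\Phi(\one)/M_\lambda(w)$ with a lower bound on $M_\lambda(w)$ that itself involves $G_\beta(w)$ (through $\kappa^*(w)$), obtaining a self-referential inequality \eqref{eq:10007} from which $G_\beta(w)<c_3$ follows; only afterwards does it bound $\max_i w_i$ via coercivity of $\Phi$, and finally $\min_i w_i$ from the $G_\beta$ bound and $\max_i w_i\le c_5$. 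Your dichotomy $P(w)\gtrless P(\one)$ sidesteps the self-referential step for the upper bound on $w$: when $P(w)\ge P(\one)$ the inequality gives $\Phi(w)\le\Phi(\one)$ directly, and when $P(w)<P(\one)$ the support of $m(\cdot,w)$ is confined independently of $w$ (since $\kappa^*(w)<\kappa^*(\one)$), so $s(w,\lambda)$ is bounded and $\Phi(w)\le\Phi(\one)+C(\one)s_0$. This makes your route to $\|w\|_\infty\le R$ a bit more transparent, at the price of a separate argument for the lower bound on $P(w)$, where you essentially reproduce the paper's mechanism (the lower bound $s\ge\underline s>0$ once $\kappa^*(w)\ge a_2$). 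The two arguments are of comparable length and difficulty; neither is strictly simpler.
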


\begin{proof}
From \eqref{eq:chap:examples:net_output_CobbDouglas}, we deduce that  for two positive  constants $c_1$ and $c_2$
  \begin{equation}
    \label{eq:10006}
    g(k,w)     = c_1 k^{\frac \alpha {1-|\beta|}}  G_\beta (w)\quad \hbox{and}\quad   \kappa^*(w)=c_2  \left(G_\beta (w)\right) ^{\frac {1-|\beta|}  {1-\alpha-|\beta|}},
\end{equation}
where
\begin{displaymath}
  G_\beta (w)= \prod_{i=1}^d w_i^{-\frac {\beta_i} {1-|\beta|}}.
\end{displaymath}
Setting
\begin{displaymath}
M_\lambda(w)=   \left( \lambda 
   c_1 \int_0^\infty  k^{\frac \alpha {1-|\beta|}} dm(k,w) + (1-\lambda) M_0\right)  \quad \quad \hbox{with} \quad 
 M_0 =  c_1\int_0^\infty  k^{\frac \alpha {1-|\beta|}} d\hat \eta(k),
\end{displaymath}
 \eqref{eq:10004} becomes
\begin{equation}
    \label{eq:10008}
  \Phi(w)+ M_\lambda (w) G_\beta(w) \le \Phi(\one)+M_\lambda(w) .
\end{equation}
Since $\Phi(w)\ge 0$,   \eqref{eq:10008} implies that $G_\beta(w) \le 1+   \Phi(\one)/ M_\lambda(w)$.
On the other hand, \eqref{eq:10006} yields
\begin{equation}
  \label{eq:10005}
M_\lambda(w)\ge c_1 \lambda \min \left(   \underline a, c_2  \left(G_\beta (w)\right) ^{\frac {1-|\beta|}  {1-\alpha-|\beta|}}\right) ^\frac \alpha {1-|\beta|}   +(1-\lambda) M_0 ,
\end{equation}
where $\underline a$ is the minimal value in the support of $\hat \eta$. Combining the latter two estimates yields
\begin{equation}
  \label{eq:10007}
  G_\beta(w) \le 1+  \frac { \Phi(\one) } {
 c_1  \lambda \min \left(   \underline a, c_2  \left(G_\beta (w)\right) ^{\frac {1-|\beta|}  {1-\alpha-|\beta|}}\right) ^\frac \alpha {1-|\beta|}  +(1-\lambda)M_0    }.
\end{equation}
It is easy to deduce from  \eqref{eq:10007} that $ G_\beta(w) < c_3 $, for a positive constant $c_3$ independent of $w$.
\\
If  $\overline a$ is the maximal value in the support of $\hat \eta$, this implies that
\begin{equation}
  \label{eq:10010}
  \begin{split}
    M_\lambda(w)& \le c_1 \lambda \max \left(   \overline a, c_2  \left(G_\beta (w)\right) ^{\frac {1-|\beta|}  {1-\alpha-|\beta|}}\right) ^\frac \alpha {1-|\beta|}   +(1-\lambda) M_0     \\
    &\le c_1  \lambda\max \left(   \overline a, c_2  c_3 ^{\frac {1-|\beta|}  {1-\alpha-|\beta|}}\right) ^\frac \alpha {1-|\beta|}   +(1-\lambda) M_0  \\
     & =c_4,
  \end{split}
\end{equation}
where $c_4$ is a positive constant.
We deduce from this and   \eqref{eq:10008} that
\begin{equation}
    \label{eq:10009}
  \Phi(w)\le \Phi(\one)+M_\lambda(w) \le  \Phi(\one) +c_4.
\end{equation}
From the coercivity of $\Phi$, this yields that $\max_{i} w_i < c_5$, for a positive constant $c_5$.
Then $ G_\beta(w) < c_3 $ implies that $\min_{i} w_i>\epsilon$, where $\epsilon$ is a positive constant which
can be obtained  from the exponents $\beta_i$ and the constants $c_3$ and $c_5$.
Finally, taking a smaller value of $\epsilon$ if necessary, we get      \eqref{eq:10012}.
\end{proof}

\subsubsection{Constant elasticity of substitution} \label{sec:assump_ces}
\begin{proposition}
  Assumption   \ref{ass:chap:MFG_model:technical_assump} holds with
  the  example of the  production function with the constant elasticity of substitution
described in Subsection \ref{sec:import-exampl-util} .
\end{proposition}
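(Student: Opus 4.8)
The plan is to exploit a feature that is special to the constant-elasticity-of-substitution model and which, in fact, makes the argument \emph{simpler} than for Cobb--Douglas: the target capital $\kappa^*(w)$ is bounded \emph{uniformly} in $w$. Indeed, the bracket in~\eqref{eq:lambda_CES} is at least $k^\alpha$, so $\lambda(k,w)\,(k^\alpha)^{1-\gamma}\le\gamma$, i.e. $\lambda(k,w)\le \gamma\, k^{-\alpha(1-\gamma)}$ for every $(k,w)$. Plugging this into the characterisation~\eqref{eq:target_capital_CES} of $\kappa^*(w)$, namely $\alpha\lambda(\kappa^*(w),w)(\kappa^*(w))^{\alpha-1}=\rho+\delta$, gives $\rho+\delta\le \alpha\gamma\,(\kappa^*(w))^{\alpha\gamma-1}$, hence, since $\alpha\gamma\in(0,1)$,
\begin{equation*}
\kappa^*(w)\ \le\ C^*\ :=\ \Bigl(\tfrac{\alpha\gamma}{\rho+\delta}\Bigr)^{\frac1{1-\alpha\gamma}},
\end{equation*}
a constant independent of $w$. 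By Remark~\ref{sec:rk_support} the support of $m(\cdot,w)$ is then contained in the \emph{fixed} compact interval $\hat J:=[0,\max(\overline a,C^*)]$, where $\underline a$ and $\overline a$ denote the smallest and largest points of ${\rm supp}(\hat\eta)$. Writing $\mu_{\lambda,w}:=(1-\lambda)\hat\eta+\lambda\,m(\cdot,w)$, all the measures $\mu_{\lambda,w}$ are therefore supported in $\hat J$ and, by~\eqref{eq:12} and Assumption~\ref{ass:secFP:1}, have total mass lying in $[\underline m_0,\overline m_0]$ for two positive constants $\underline m_0,\overline m_0$ independent of $(\lambda,w)$.

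First I would bound $w$ from above. Since $g(k,w)=\sup_{\ell\in\R_+^d}\{F(k,\ell)-w\cdot\ell\}\ge F(k,0)\ge 0$ for all $k\ge 0$, dropping the nonnegative term $\int g(\cdot,w)\,d\mu_{\lambda,w}$ from the left-hand side of~\eqref{eq:10004} yields
\begin{equation*}
\Phi(w)\ \le\ \Phi(\one)+\int_0^{+\infty} g(k,\one)\,d\mu_{\lambda,w}(k)\ \le\ \Phi(\one)+\overline m_0\,\sup_{k\in\hat J} g(k,\one)\ =:\ C_1,
\end{equation*}
the last supremum being finite because $g(\cdot,\one)\in C^0(\R_+)$. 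By the coercivity of $\Phi$ (Assumption~\ref{ass:chap:MFG_model:S}) the sublevel set $\{\Phi\le C_1\}$ is bounded, so $w_i\le C_2$ for all $i$, with $C_2$ independent of $(\lambda,w)$.

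Then I would bound $w$ from below, again using the explicit form of $F$. For each index $i$ and each $s\ge 0$, testing the supremum defining $g$ with $\ell=s e_i$ and using $(k^\alpha+s^{\beta_i})^\gamma\ge s^{\beta_i\gamma}$ gives $g(k,w)\ge s^{\beta_i\gamma}-w_i s$; optimising the right-hand side over $s>0$ produces
\begin{equation*}
g(k,w)\ \ge\ c_i\, w_i^{-\theta_i},\qquad c_i:=(1-\beta_i\gamma)(\beta_i\gamma)^{\frac{\beta_i\gamma}{1-\beta_i\gamma}}>0,\qquad \theta_i:=\frac{\beta_i\gamma}{1-\beta_i\gamma}>0,
\end{equation*}
valid for \emph{all} $k\ge 0$. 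Keeping only $\Phi(w)\ge 0$ on the left of~\eqref{eq:10004} and combining this lower bound with $\|\mu_{\lambda,w}\|\ge\underline m_0$,
\begin{equation*}
c_i\,\underline m_0\, w_i^{-\theta_i}\ \le\ \int_0^{+\infty} g(k,w)\,d\mu_{\lambda,w}(k)\ \le\ \Phi(\one)+\int_0^{+\infty} g(k,\one)\,d\mu_{\lambda,w}(k)\ \le\ C_1,
\end{equation*}
so $w_i\ge (c_i\underline m_0/C_1)^{1/\theta_i}>0$ for every $i$. Choosing $\epsilon\in(0,1)$ smaller than $1/C_2$ and than $\min_i(c_i\underline m_0/C_1)^{1/\theta_i}$ then yields~\eqref{eq:10012}, which proves Assumption~\ref{ass:chap:MFG_model:technical_assump}.

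I expect the only real obstacle to be the first step, the uniform bound on $\kappa^*(w)$: once the measures $\mu_{\lambda,w}$ are known to live on a fixed compact set, the two-sided squeeze (from above by coercivity of $\Phi$, from below by the $w_i^{-\theta_i}$ blow-up of $g$) is routine and uniform in $\lambda\in[0,1]$. Note that this is precisely what is missing in the Cobb--Douglas case of Subsection~\ref{sec:assump_cobb_douglas}, where $\kappa^*(w)$ is unbounded as some $w_i\to0$ and the more delicate bootstrap on $G_\beta(w)$ and $M_\lambda(w)$ is needed; here the CES structure makes that bootstrap unnecessary.
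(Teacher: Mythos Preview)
Your proof is correct and follows the paper's strategy exactly: both obtain the same uniform bound $\kappa^*(w)^{\,1-\alpha\gamma}\le \alpha\gamma/(\rho+\delta)$ from \eqref{eq:lambda_CES}--\eqref{eq:target_capital_CES}, then use coercivity of $\Phi$ for the upper bound on $w$ and the blow-up of $g$ as $\min_i w_i\to 0$ for the lower bound. The only difference is the competitor used in this last step---you test $g$ with $\ell=se_i$ and optimise in $s$ to get $g(k,w)\ge c_i w_i^{-\theta_i}$ directly, whereas the paper takes $\tilde\ell_i=w_i^{-b/\beta_i}$ for all $i$ (with $b=\min_j\beta_j/2$) and then needs the already-established upper bound $\|w\|_\infty\le c_1$ to control the resulting cross terms; your variant is slightly cleaner but the argument is essentially the same.
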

\begin{proof}
Combining  \eqref{eq:lambda_CES} and   \eqref{eq:target_capital_CES} implies that
\begin{equation*}
  \gamma  = \frac {\delta+\rho} \alpha  \left((\kappa^* (w))^\alpha + \sum_{j = 1}^d\left(\frac{\lambda\beta_j}{w_j}\right)^\frac{\beta_j}{1-\beta_j}\right)^{1-\gamma} (\kappa^*(w))^{1-\alpha}.
\end{equation*}
Since $\gamma\in (0,1)$, this yields
\begin{equation}
  \label{eq:10015}
    \frac {\delta+\rho} \alpha  (\kappa^*(w))^{1-\alpha +\alpha (1-\gamma)}\le \gamma.
\end{equation}
Hence $\kappa^*(w) $ is bounded from above by a positive constant independent of $w$.
From this information and the coercivity of $\Phi$, we proceed as for the Cobb-Douglas function and see that
there exists  a positive constant $c_1$ such that \eqref{eq:10004} implies that $\|w\|_{\infty}< c_1$.
  
Next, we claim that
\begin{equation}
  \label{eq:10016}
 \lim_{
 \left\{  \begin{array}[c]{l}
\min_{i=1,\dots, d} w_i \to 0,\\  \|w\|_{\infty} \le c_1
   \end{array}\right.
} g(0, w)=+\infty.     
\end{equation}
Since $g(\cdot, w)$ is non decreasing, we deduce from  \eqref{eq:10016} that there exists a constant $\epsilon>0$ independent of $\lambda$ such that \eqref{eq:10004}  implies $\min_{i} w_i >\epsilon$ and taking a smaller value of $\epsilon$ if necessary, we get  \eqref{eq:10012}.

We are left with proving \eqref{eq:10016}: we know that
\begin{displaymath}
g(k,w)\ge g(0,w)=\sup_{\ell}\left( \sum_{i} \ell_i^{\beta_i}\right)^\gamma -w\cdot \ell.  
\end{displaymath}
 A competitor can be chosen by taking  $\tilde \ell_i= w_i^{-\frac b {\beta_i}}$ where $b=\min_i \beta_i/2$. Therefore  $g(k,w)\ge \left(\sum_i w_i^{-b}\right)^\gamma -\sum_i w_i ^{1-\frac b {\beta_i}}$. The first term tends to $+\infty$ if $\min_i w_i\to 0$, while the second term is bounded since $\|w\|_\infty\le c_1$.
\end{proof}

\paragraph{\bf Acknowledgements}
All the authors  were partially supported by the ANR  (Agence Nationale de la Recherche) through MFG project ANR-16-CE40-0015-01.
Y.A. acknowledges partial support from the Chair Finance and Sustainable Development and the FiME Lab  (Institut Europlace de Finance) . The paper was completed when Y.A spent a semester at INRIA matherials. G.C. acknowledges the support of the Lagrange Mathematics and Computing Research Center.

\bibliographystyle{siamplain}
\bibliography{YA_GC_QP_DT}

\end{document}